\newcommand{\Og}{\Omega}
\newcommand{\Dt}{\Delta}
\newcommand{\be}{\begin{equation}}
\newcommand{\ee}{\end{equation}}
\newcommand{\ba}{\begin{array}}
\newcommand{\ea}{\end{array}}
\newcommand{\bea}{\begin{eqnarray}}
\newcommand{\eea}{\end{eqnarray}}
\newcommand{\beas}{\begin{eqnarray*}}
\newcommand{\eeas}{\end{eqnarray*}}
\crefname{hypothesis}{Hypothesis}{Hypotheses}
\title{Variable Time Step Method of Dahlquist, Liniger and Nevanlinna (DLN) for A Corrected Smagorinsky Model  \thanks{
The research was partially supported by NSF grant DMS-2110379.}}
\author{Farjana Siddiqua\thanks{Department of Mathematics, University of Pittsburgh, Pittsburgh, PA 15260, USA. 
  Email: fas41@pitt.edu.}
\and Wenlong Pei\thanks{Department of Mathematics, The Ohio State University, Columbus, OH 43210, USA. 
  Email: pei.176@osu.edu.}
}
\begin{document}
\nolinenumbers
\maketitle
\begin{abstract}
 Turbulent flows  strain resources, both memory and CPU speed. The DLN method has greater accuracy and allows larger time steps, requiring less memory and fewer FLOPS. The DLN method can also be implemented adaptively. The classical Smagorinsky model, as an effective way to approximate a (resolved) mean velocity, has recently been corrected to represent a flow of energy from unresolved fluctuations to the (resolved) mean velocity. In this paper, we apply a family of second-order, $G$-stable time-stepping methods proposed by Dahlquist, Liniger, and Nevanlinna (the DLN method) to one corrected Smagorinsky model and provide the detailed numerical analysis of the stability and consistency. We prove that the numerical solutions under any arbitrary time step sequences are unconditionally stable in the long term and converge at second order. We also provide error estimate under certain time step condition. Numerical tests are given to confirm the rate of convergence and also to show that the adaptive DLN algorithm helps to control numerical dissipation so that backscatter is visible.
\end{abstract}
\begin{keywords}
  Eddy Viscosity, Corrected Smagorinsky Model, Complex turbulence, Backscatter, the DLN method, $G$-stability, variable time-stepping.
\end{keywords}
\begin{AMS}
   65M06, 65M12, 65M60, 65M06, 65M12, 65M22, 65M60, 76M10
\end{AMS}
\section{Introduction}
Eddy viscosity (EV) models are the most common approaches to depict the average of turbulent flow of Navier-Stokes equations (NSE). Various eddy viscosity models in practical setting are proposed for analytical and numerical study \cite{BFR80_13FPCon,FP99_Springer,MR1753115,IJLMT03_IJCFD,IL98}. Unfortunately, most EV models have difficulties in simulating backscatter or complex turbulent flow not at statistical equilibrium due to neglection of the intermittent energy flow from fluctuations back to means. To overcome this defect, Jiang and Layton \cite{jiang2016ev} calibrated the standard eddy viscosity model by fitting the turbulent viscosity coefficient to flow data. Rong, Layton, and Zhao \cite{rong2019extension} extended the usual Baldwin-Lomax model so that the new model can account for statistical backscatter without artificial negative viscosities. Recently, Siddiqua and Xie \cite{siddiqua2022numerical} have corrected the classical Smagorinsky model \cite{Smag63} with no new fitting parameters to reflect a flow of energy from unresolved fluctuations to means in the corrected Smagorinsky model (CSM henceforth). Most recently, Dai, Liu, Liu, Jiang, and Chen \cite{dai2023development} proposed a new dynamic Smagorinsky model by an artificial neural network for prediction of outdoor airflow and pollutant dispersion. Herein we give an analysis of the method of Dahlquist, Liniger, and Nevanlinna \cite{DLN83_SIAM_JNA} (the DLN method henceforth) for the CSM \cite{siddiqua2022numerical} with variable time steps. Let $f(x,t)$ be the prescribed body force, $\nu$ be the kinematic viscosity in the regular and bounded flow domain $\Omega\subset \mathbb{R}^d\ (d=2,\ 3)$. We analyze the variable step, DLN time discretization for the CSM: $\div{ w}=0$ and
\begin{equation}\label{csm0}
\begin{aligned}
& w_t-C_s^4\delta^2\mu^{-2}\Dt w_t+ w\cdot\grad w-\nu\Dt w+\grad q-\div\Big((C_s\delta)^2|\grad w|\grad w\Big)=f, \   x\in\Omega, \ 0\leq t\leq T.
\end{aligned}
\end{equation}
Here $\mu$ is a constant from Kolmogorov-Prandtl relation  \cite{kolmogrov,prandtl} and $( w,q)$ approximate an ensemble average pair of velocity and pressure of Navier-Stokes solutions, $(\overline{u},\overline{p})$. This is an eddy viscosity model with turbulent viscosity, $\nu_T=(C_s\delta)^2|\grad w|$, where $C_s\approx 0.1$, (suggested by Lilly \cite{lilly}), $\delta$ is a length scale (or grid-scale). In \cite{siddiqua2022numerical}, the CSM model derivation and some basic properties of the CSM are developed and two algorithms for its numerical simulation are proposed. However, the significant backscatter of model dissipation is not observed in specific examples except for Linearized Crank-Nicolson time discretization. Besides that, constant time discretization in their algorithms (backward Euler and Linearized Crank-Nicolson time-stepping schemes) excludes the use of time adaptivity since the solution pattern (in terms of stability and convergence) under extreme time step ratios is hard to expect. Dahlquist, Liniger, and Nevanlinna designed a one-parameter family of one-leg, second-order method for evolutionary equations \cite{DLN83_SIAM_JNA}. This time-stepping method (the DLN method) is proved to be $G$-stable (non-linear stable) under any arbitrary time grids \cite{Dah76_Tech_RIT,Dah78_BIT,Dah78_AP_NYL} and hence ideal choice for time discretization of fluid models \footnote[1]{To our knowledge, the DLN method is the \textbf{only} variable multi-step method which are both non-linear stable and second-order accurate.}. Herein we apply the fully-discrete DLN algorithm (finite element space discretization) for the CSM in \eqref{csm0} and present a complete numerical analysis of the algorithm. We prove that the numerical solutions on arbitrary time grids are unconditionally, long-term stable, and converge to exact solutions at second order with moderate time step restrictions. Let $\{ t_{n} \}_{n=0}^{N}$ be the time grids on interval $[0,T]$ 
and $k_{n} = t_{n+1} - t_{n}$ is the local time step. Let $w_{n}^{h}$ and $q_{n}^{h}$ be the numerical approximations of velocity and pressure at time $t_{n}$ of the CSM in \eqref{csm0} respectively on certain finite element space with the diameter $h$, the fully discrete DLN algorithm (with parameter $\theta \in [0,1]$) for the CSM in \eqref{csm0} is written as follows: $\nabla \cdot w_{n+1}^{h} = 0$ and
\begin{equation}\label{eq:DLN-CSM-Alg}
	\begin{aligned}
		&\frac{\alpha_2 w_{n+1}^h+\alpha_1 w_{n}^h+\alpha_0 w_{n-1}^h}{\alpha_{2}k_{n} - \alpha_{0}k_{n-1}} 
		- \frac{C_s^4\delta^2}{\mu^2} \Delta \Big(
		\frac{\alpha_2 w_{n+1}^h + \alpha_1 w_{n}^h + \alpha_0 w_{n-1}^h}{\alpha_{2}k_{n} - \alpha_{0}k_{n-1}} \Big)
		 \\&
		+ \Big( \sum_{\ell=0}^{2} \beta_{\ell}^{(n)} w_{n-\ell}^h \Big) \cdot 
		\nabla \Big( \sum_{\ell=0}^{2} \beta_{\ell}^{(n)} w_{n-\ell}^h \Big) 
		- \nu \Delta \Big( \sum_{\ell=0}^{2} \beta_{\ell}^{(n)} w_{n-\ell}^h \Big) 
		+ \nabla \Big( \sum_{\ell=0}^{2} \beta_{\ell}^{(n)} q_{n-\ell}^h \Big)
		 \\&
		+\nabla \cdot \Big((C_s\delta)^2
		\Big| \nabla  \Big( \sum_{\ell=0}^{2} \beta_{\ell}^{(n)} w_{n-\ell}^h \Big) \Big| 
		\nabla \Big( \sum_{\ell=0}^{2} \beta_{\ell}^{(n)} w_{n-\ell}^h \Big)  \Big)
		= f \Big( \sum_{\ell=0}^{2} \beta_{\ell}^{(n)} t_{n-\ell}^h \Big),  \quad \text{for } 1 \leq n \leq N-1,  
	\end{aligned}
 \end{equation}
	where $| \cdot |$ is the Euclidian norm on $\mathbb{R}^{d}$ and the coefficients in \eqref{eq:DLN-CSM-Alg} are
	\begin{gather*}
		\begin{bmatrix}
			\alpha _{2} \vspace{0.2cm} \\
			\alpha _{1} \vspace{0.2cm} \\
			\alpha _{0} 
		\end{bmatrix}
		= 
		\begin{bmatrix}
			\frac{1}{2}(\theta +1) \vspace{0.2cm} \\
			-\theta \vspace{0.2cm} \\
			\frac{1}{2}(\theta -1)
		\end{bmatrix}, \ \ \ 
		\begin{bmatrix}
			\beta _{2}^{(n)}  \vspace{0.2cm} \\
			\beta _{1}^{(n)}  \vspace{0.2cm} \\
			\beta _{0}^{(n)}
		\end{bmatrix}
		= 
		\begin{bmatrix}
			\frac{1}{4}\Big(1+\frac{1-{\theta }^{2}}{(1+{%
					\varepsilon _{n}}{\theta })^{2}}+\varepsilon _{n}^{2}\frac{\theta (1-{%
					\theta }^{2})}{(1+{\varepsilon _{n}}{\theta })^{2}}+\theta \Big)\vspace{0.2cm%
			} \\
			\frac{1}{2}\Big(1-\frac{1-{\theta }^{2}}{(1+{\varepsilon _{n}}{%
					\theta })^{2}}\Big)\vspace{0.2cm} \\
			\frac{1}{4}\Big(1+\frac{1-{\theta }^{2}}{(1+{%
					\varepsilon _{n}}{\theta })^{2}}-\varepsilon _{n}^{2}\frac{\theta (1-{%
					\theta }^{2})}{(1+{\varepsilon _{n}}{\theta })^{2}}-\theta \Big)%
		\end{bmatrix}.
	\end{gather*}
	The step variability $\varepsilon _{n} = (k_n - k_{n-1})/(k_n + k_{n-1})$ is the function of two step sizes and $\varepsilon_n\in(-1,1)$. 

The main result of this article is the complete numerical analysis of the DLN method and computational tests showing backscatter phenomena for the CSM model \eqref{csm0}. The paper is organized as follows. We provide some necessary notations and preliminaries for numerical analysis in \Cref{sec:np}. We present the fully discrete variational formulation in \Cref{sec:fd}. We show that the DLN solutions are long-term, unconditional stable in \Cref{thm:Stab-Uncond} of \Cref{sec:Stability} and perform the variable step error analysis with the moderate time step restriction in \Cref{thm:numerial-error} of \Cref{sec:ne}. Furthermore, in \Cref{sec:firstexample}, we present the test problem with exact solutions \cite{Victor2017artificial} to confirm the fully discrete DLN algorithm is second-order in time and in \Cref{sec:secondexample}, we present the test problem about flow between offset cylinders \cite{JL14_IJUQ} to check the unconditional stability and the efficiency of the time adaptivity of the DLN algorithm.
\subsection{Related Work}
	Due to the fine properties about the stability and consistency, the whole DLN family calls great attention in the simulation of evolutionary equations and fluid models. The DLN method with $\theta = \frac{2}{3}$ is suggested in \cite{DLN83_SIAM_JNA} to relieve the conflict between error and stability. Kulikov and Shindin find that the DLN method with $\theta = \frac{2}{\sqrt{5}}$ have the best stability at infinity \cite{kulikov2005stable}. The midpoint rule (the DLN method with $\theta = 1$), conserving all quadratic Hamiltonians, has been thoroughly studied and widely used in computational fluid dynamics \cite{AP98_SIAM,HNW93I_Springer,BT20_AML,LLMNR09_CMAME,BST21_JMFM,BT21_CMAME,BPT22_IJNAM}. Recently, the whole DLN family is applied to some time-dependent fluid model and shows its outstanding performance in some specific examples \cite{LPQT21_NMPDE,QHPL21_JCAM,QCWLL23_ANM}. In addition, the DLN implementation has been simplified by the re-factorization process (adding time-filters on backward Euler method) for wide application \cite{LPT21_AML}. Time adaptivity of the DLN method (by the local truncation error criterion) is proposed to solve stiff differential systems for both efficiency and accuracy \cite{LPT22_Tech}.
\section{Notations and preliminary results}\label{sec:np}
${\empty}$
In this section, we introduce some of the notations and results used in this paper.
Recall that $\Omega\subset \mathbb{R}^d\ (d=2,\ 3)$ is the bounded domain of the CSM in \cref{csm0}. Banach space $L^p(\Omega)\ (p\geq 1)$ contains all Lebesgue measurable function $f$ such that $|f|^p$ is integrable. For $r\in\{0\}\cup \mathbb{N}$, Sobolev space $W^{m,p}(\Omega)$ with norm $\|.\|_{m,p}$ contains all functions whose weak derivatives up to $m$-th belong to $L^{p}(\Omega)$. Thus $W^{m,p}(\Omega)$ is exactly $L^{p}$ when $m=0$. We use $H^{m}$ with norm $\| \cdot \|_{m}$ and semi-norm $| \cdot |_{m}$ to denote the inner product space $W^{m,2}(\Omega)$. $\|\cdot\|$ and $(\cdot,\cdot)$ denote the $L^2(\Omega)$ norm and inner product, respectively.
The solution spaces $X$ for the velocity and $Q$ for the pressure are defined as:
\begin{gather*}
		X= \Big\{ v\in \big(L^3(\Omega) \big)^{d}: \nabla  v \in \big(L^3(\Omega)\big)^{d \times d}, \ v \big|_{\partial\Omega} = 0 \Big\}, \ \ \ 
		Q = \Big\{ q\in L^2(\Omega): \int_\Omega q\ d x=0 \Big\},
	\end{gather*}
	and the divergence-free velocity space is 
	\begin{gather*}
		V = \big\{ v \in X: (q,\nabla \cdot  v)=0,\ \forall q \in Q \big\}.
\end{gather*}
$X'$ is the dual norm of $X$ with the dual norm 
	\begin{gather*}
		\|f\|_{-1}=\sup_{0 \neq v \in X}\frac{(f, v)}{\|\nabla  v\|}, \qquad \forall f \in X'.
	\end{gather*}
\begin{definition}\label{trilinear0}
(Trilinear Form)
Define the trilinear form $b^*:X\times{X}\times{X}\rightarrow \mathbb{R}$ as follows 
$$b^*( u, v, w):=\frac{1}{2}( u\cdot \grad  v, w)-\frac{1}{2}( u\cdot \grad  w, v),\qquad
		\forall u,v,w \in X.$$
\end{definition}
\begin{lemma} \label{trilinear1}
The nonlinear term $b^*(\cdot,\cdot,\cdot)$ is continuous on $X\times X\times X$ (and thus on $V\times V\times V$) which has the following skew-symmetry property, 
\begin{gather}
		\label{eq:Skew-Symmetry}
		b^{\ast}( u, v, w)=-b^{\ast}( u, w, v), \qquad b^*( u, v, v)=0.
	\end{gather}
As a consequence, we get
\begin{align*}
b^*( u, v, w)=( u\cdot \grad  v, w),\ \quad &\forall\  u\in V \text{and} \  v,\  w\in X, \\
b^*( u, v, v)=0,\quad &\forall\  u,\  v\in X.
\end{align*}
\end{lemma}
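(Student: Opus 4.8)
The plan is to dispatch the three assertions in increasing order of difficulty. The skew-symmetry $b^*(u,v,w)=-b^*(u,w,v)$ and the vanishing property $b^*(u,v,v)=0$ are purely algebraic and fall out directly from the definition: interchanging $v$ and $w$ in $\frac12(u\cdot\grad v,w)-\frac12(u\cdot\grad w,v)$ reverses the overall sign, and setting $w=v$ makes the two halves cancel identically. The last listed consequence $b^*(u,v,v)=0$ for all $u,v\in X$ is then immediate, either from the skew-symmetry or again directly from the definition, with no integrability hypothesis beyond membership in $X$.

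For continuity on $X\times X\times X$, I would estimate each of the two terms $(u\cdot\grad v,w)$ and $(u\cdot\grad w,v)$ by the generalized H\"older inequality with the three conjugate exponents $3,3,3$, which is legitimate since $\frac13+\frac13+\frac13=1$. Writing $(u\cdot\grad v,w)=\int_\Omega\sum_{i,j}u_i\,\partial_i v_j\,w_j\,dx$, each factor $u_i$, $\partial_i v_j$, $w_j$ lies in $L^3(\Omega)$ by the very definition of $X$, so H\"older yields $|(u\cdot\grad v,w)|\le C\|u\|_{L^3}\|\grad v\|_{L^3}\|w\|_{L^3}$, and symmetrically for the other term. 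Adding the two bounds gives $|b^*(u,v,w)|\le C\|u\|_{X}\|\grad v\|_{L^3}\|w\|_{X}$, which is exactly the trilinear boundedness asserted; continuity on $V\times V\times V$ follows at once because $V\subset X$. The choice of the $L^3$-based space $X$ is precisely what makes this step clean, since it places all three factors on an equal footing.

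The one step demanding genuine care, and where I expect the main (though still modest) obstacle, is the consequence $b^*(u,v,w)=(u\cdot\grad v,w)$ for $u\in V$ and $v,w\in X$. Here I would integrate by parts in $(u\cdot\grad w,v)=\int_\Omega\sum_{i,j}u_i\,\partial_i w_j\,v_j\,dx$ to obtain $(u\cdot\grad w,v)=-\big((\div u)v,w\big)-(u\cdot\grad v,w)$, the boundary integral $\int_{\partial\Omega}(u\cdot n)(v\cdot w)\,ds$ vanishing because every element of $X$ satisfies $v|_{\partial\Omega}=0$. The subtlety is twofold. First, this integration by parts must be justified in the non-Hilbert space $X$, which I would handle by a density argument, approximating $u,v,w$ by smooth fields and passing to the limit using the $L^3$ H\"older bounds established above. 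Second, the weak divergence-free condition $(q,\div u)=0$ for all $q\in Q$ must be upgraded to $\div u=0$ a.e.; since $\div u\in L^2(\Omega)$ (as $L^3\subset L^2$ on the bounded domain $\Omega$), orthogonality to all mean-zero $q$ forces $\div u$ to be constant, and $\int_\Omega\div u=\int_{\partial\Omega}u\cdot n\,ds=0$ pins that constant to zero. With $\div u=0$ the identity collapses to $(u\cdot\grad w,v)=-(u\cdot\grad v,w)$, and substituting into the definition gives $b^*(u,v,w)=\frac12(u\cdot\grad v,w)+\frac12(u\cdot\grad v,w)=(u\cdot\grad v,w)$, completing the proof.
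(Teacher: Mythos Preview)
Your proof is correct and follows the standard route. The paper itself does not give a proof at all: it simply writes ``Proof of this lemma is standard, see p.114 of Girault and Raviart \cite{GandR}.'' Your argument is precisely the classical one that reference contains---H\"older with exponents $(3,3,3)$ for continuity (tailored here to the $L^3$-based space $X$), the algebraic skew-symmetry from the definition, and integration by parts plus $\div u=0$ a.e.\ for the identity $b^*(u,v,w)=(u\cdot\nabla v,w)$ when $u\in V$. Your handling of the two subtleties (density in $X\cong W^{1,3}_0(\Omega)^d$ to justify the integration by parts, and the passage from weak to pointwise divergence-free via $\div u\in L^3\subset L^2$ and the mean-zero constraint) is sound and in fact more explicit than what the cited reference spells out.
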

\begin{proof}
Proof of this lemma is standard, see $p.114$ of Girault and Raviart \cite{GandR}.
\end{proof}
\begin{lemma}
		\label{lemma:trilinear-ineq}
		For any $ u,\  v,\  w\in X$ 
		\begin{align}
			\label{eq:b-bound}
			b^{\ast}(u, v, w) &\leq C(\Omega) \|\nabla u \| \|\nabla v \|\|\nabla w \|, 
			\notag \\
			b^{\ast}(u, v, w) &\leq C(\Omega) \| u \|^{1/2} \|\nabla u \|^{1/2} \|\nabla v\| \|\nabla w \|. 
		\end{align}
	\end{lemma}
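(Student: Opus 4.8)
The plan is to reduce both estimates to a single bound on $|(u\cdot\nabla v,w)|$. Indeed, by \Cref{trilinear0} we have $b^{\ast}(u,v,w)=\tfrac12(u\cdot\nabla v,w)-\tfrac12(u\cdot\nabla w,v)$, and the right-hand sides of \eqref{eq:b-bound} are symmetric under interchanging $v$ and $w$; hence controlling $\tfrac12|(u\cdot\nabla v,w)|$ by the target quantity and repeating with $v$ and $w$ swapped immediately yields the bound on $b^{\ast}$. So I would first write
\[
|(u\cdot\nabla v,w)| \le \int_{\Omega} |u|\,|\nabla v|\,|w|\,dx
\]
and apply the generalized H\"older inequality with three exponents whose reciprocals sum to one. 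Note that on the bounded domain $\Omega$ one has $L^{3}(\Omega)\hookrightarrow L^{2}(\Omega)$, so every $v\in X$ lies in $(H^{1}_{0}(\Omega))^{d}$ and the usual Sobolev and Poincar\'e inequalities are available.

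For the first estimate I would pick the H\"older exponents so that each factor is absorbed by an $L^{2}$ gradient norm via the Sobolev embedding $H^{1}_{0}(\Omega)\hookrightarrow L^{p}(\Omega)$ (valid for $p\le 6$ when $d=3$ and for every finite $p$ when $d=2$) together with Poincar\'e's inequality $\|v\|\le C(\Omega)\|\nabla v\|$. Concretely, the triple $(L^{3},L^{2},L^{6})$ in three dimensions gives $\|u\|_{L^{3}}\|\nabla v\|\,\|w\|_{L^{6}}\le C(\Omega)\|\nabla u\|\|\nabla v\|\|\nabla w\|$, while $(L^{4},L^{2},L^{4})$ does the job in two dimensions; all the Sobolev and Poincar\'e constants are lumped into $C(\Omega)$.

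For the second, sharper estimate I would keep the $v$- and $w$-factors bounded by gradient norms but treat the $u$-factor by the Gagliardo--Nirenberg (Ladyzhenskaya) interpolation inequality. In three dimensions the split $(L^{3},L^{2},L^{6})$ combined with $\|u\|_{L^{3}}\le C(\Omega)\|u\|^{1/2}\|\nabla u\|^{1/2}$ and $\|w\|_{L^{6}}\le C(\Omega)\|\nabla w\|$ produces exactly $C(\Omega)\|u\|^{1/2}\|\nabla u\|^{1/2}\|\nabla v\|\|\nabla w\|$; in two dimensions the split $(L^{4},L^{2},L^{4})$ with $\|u\|_{L^{4}}\le C(\Omega)\|u\|^{1/2}\|\nabla u\|^{1/2}$ yields the same. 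Swapping $v$ and $w$ handles the second term of $b^{\ast}$, and adding the two gives the claim.

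These estimates are classical (see Girault and Raviart \cite{GandR}), so I do not expect a genuine obstacle. The only point requiring care is that the H\"older exponents be chosen dimension-appropriately, so that the needed embedding is available and so that the interpolation exponents on $\|u\|$ and $\|\nabla u\|$ come out to exactly $1/2$ and $1/2$ in both $d=2$ and $d=3$. Checking that the Gagliardo--Nirenberg exponent is indeed $1/2$ for $\|u\|_{L^{3}}$ when $d=3$ (and for $\|u\|_{L^{4}}$ when $d=2$) is the small computation underlying the uniform statement across dimensions.
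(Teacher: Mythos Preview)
Your proposal is correct and is precisely the approach the paper takes: the paper's proof is the one-line remark ``By H\"older's inequality, Poincar\'e--Friedrichs's inequality and Ladyzhenskaya's inequality,'' and your argument simply spells out the appropriate H\"older splits and the Ladyzhenskaya (Gagliardo--Nirenberg) interpolation in dimensions $d=2,3$.
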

	\begin{proof}
		By H$\rm{\ddot{o}}$lder's inequality, Poincar\'{e}-Friedrichs's inequality and Ladyzhenskaya's inequality.
	\end{proof}
Next is a Discrete Gr\"{o}nwall Lemma, see \cite[Lemma 5.1, p.369]{heywood1990}.
 \begin{lemma}\label{lemma:Gronwall}
    	Let $\Delta t,\ B$ be non-negative real numbers and $\{ a_n \}_{n=0}^{\infty}$, $\{ b_n \}_{n=0}^{\infty}$,
		$\{ c_n \}_{n=0}^{\infty}$, $\{ d_n \}_{n=0}^{\infty}$ be non-negative sequences of real numbers such that 
		\begin{gather*}
    		a_\ell + \Delta t \sum_{n=0}^\ell b_n \leq \Delta t \sum_{n=0}^{\ell} d_n a_n 
    		+ \Delta t \sum_{n=0}^\ell c_n + B, \qquad \forall \ell \in \mathbb{N},
    	\end{gather*}
    	and $\Delta t d_n <1$ for all $n$, then 
    	\begin{gather*}
    		a_\ell + \Delta t \sum_{n=0}^\ell b_n 
    		\leq \exp \Big( \Delta t \sum_{n=0}^{\ell} \frac{d_n}{1 - \Delta t d_n} \Big)
    		\Big( \Delta t \sum_{n=0}^\ell c_n+B\Big), \qquad \forall \ell \in \mathbb{N}.
    	\end{gather*}
\end{lemma}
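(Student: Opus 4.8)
My plan is to collapse the two-sided recursion into a scalar inequality for a single accumulated quantity and then solve that linear recursion explicitly. First I would absorb the dissipation term into the unknown: set $u_\ell := a_\ell + \Delta t \sum_{n=0}^{\ell} b_n$ and $S_\ell := \Delta t \sum_{n=0}^{\ell} c_n + B$. Since every $b_n \geq 0$ we have $a_n \leq u_n$, so the hypothesis immediately upgrades to $u_\ell \leq \Delta t \sum_{n=0}^{\ell} d_n u_n + S_\ell$, with $S_\ell$ nondecreasing in $\ell$ because $c_n \geq 0$. This reformulation is convenient: proving $u_\ell \leq \exp(\cdots)\,S_\ell$ is exactly the claim, and the $b_n$-sum then survives automatically on the left-hand side.

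The key algebraic step is to isolate the diagonal term $d_\ell u_\ell$. Writing $V_\ell := \Delta t \sum_{n=0}^{\ell} d_n u_n$, so that $u_\ell \leq V_\ell + S_\ell$ and $V_\ell - V_{\ell-1} = \Delta t\, d_\ell u_\ell$, I would substitute the bound on $u_\ell$ to get $V_\ell - V_{\ell-1} \leq \Delta t\, d_\ell (V_\ell + S_\ell)$. It is precisely here that the hypothesis $\Delta t\, d_\ell < 1$ is used: it keeps $1 - \Delta t\, d_\ell > 0$, so dividing preserves the inequality and yields the scalar recursion $V_\ell \leq r_\ell V_{\ell-1} + g_\ell S_\ell$, where $r_\ell := (1 - \Delta t\, d_\ell)^{-1}$ and $g_\ell := \Delta t\, d_\ell/(1 - \Delta t\, d_\ell)$. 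The identity $r_\ell = 1 + g_\ell$ is the hinge of the whole argument.

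Next I would unroll this recursion. Because $S_m \leq S_\ell$ for $m \leq \ell$, the standard solution gives $V_\ell \leq S_\ell \sum_{m=0}^{\ell} g_m \prod_{j=m+1}^{\ell} r_j$. Substituting $g_m = r_m - 1$ turns each summand into $\prod_{j=m}^{\ell} r_j - \prod_{j=m+1}^{\ell} r_j$, so the sum telescopes to $\prod_{j=0}^{\ell} r_j - 1$. Hence $u_\ell \leq V_\ell + S_\ell \leq S_\ell \prod_{n=0}^{\ell} (1 - \Delta t\, d_n)^{-1}$. Finally the elementary inequality $1 + x \leq e^{x}$ applied to $x = g_n$ gives $(1 - \Delta t\, d_n)^{-1} = 1 + g_n \leq \exp(g_n)$, which converts the product into $\exp\big(\Delta t \sum_{n=0}^{\ell} \frac{d_n}{1 - \Delta t\, d_n}\big)$ --- exactly the claimed factor.

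The \emph{main obstacle} is the implicit (diagonal) term $d_\ell a_\ell$ appearing with the same index on the right-hand side; the assumption $\Delta t\, d_n < 1$ is exactly the condition that lets one solve for it. A related pitfall, which I would be careful to avoid, is a naive induction that bounds every $u_n$ in the sum by its maximum: after dividing out $1 - \Delta t\, d_\ell$ the effective coefficients depend on both $n$ and $\ell$, and such crude bounding is not sharp enough to close the induction. Routing through the accumulated quantity $V_\ell$ and exploiting the exact telescoping $g_m \prod_{j>m} r_j = \prod_{j \geq m} r_j - \prod_{j>m} r_j$ is what makes the estimate tight, and keeping $\Delta t \sum b_n$ inside $u_\ell$ throughout is what preserves the dissipation term in the final inequality.
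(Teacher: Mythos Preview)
Your argument is correct. The paper does not supply its own proof of this lemma but simply cites Heywood and Rannacher \cite[p.~369]{heywood1990}; the approach you outline --- isolating the diagonal term, obtaining the scalar recursion $V_\ell \leq (1-\Delta t\,d_\ell)^{-1}(V_{\ell-1}+\Delta t\,d_\ell S_\ell)$, telescoping, and finishing with $1+x\leq e^{x}$ --- is precisely the standard proof found in that reference, so there is nothing further to compare.
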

\begin{proof}
		See \cite[p.369]{heywood1990}.
\end{proof}
\begin{lemma}\label{lemma:Monotone-LLC}
({\bf Strong Monotonicity (SM) and Local Lipschitz Continuity (LLC)})\\There exists $C_1, \ C_2>0$ such that for all $ u,\  v,\  w\in W^{1,3}(\Omega),$
\begin{align}
    \textbf{(SM)}&\quad  (|\nabla  u| \nabla  u \!-\! |\nabla  w| \nabla  w, \nabla( u- w))
			\!\geq \! C_1 \|\nabla( u- w) \|_{0,3}^3,\label{eq:Strong-Monotone} \\ 
    \textbf{(LLC)}&\quad  (|\nabla  u| \nabla  u \!-\! |\nabla  w|\nabla  w,\nabla  v)
			\!\leq \! C_2 \big(\! \max\{\|\nabla  u\|_{0,3},\|\nabla  w\|_{0,3}\}  \!\big)
			\|\nabla( u - w)\|_{0,3} \| \nabla  v\|_{0,3}.\label{eq:LLC}
\end{align}
\end{lemma}
\begin{proof}
		We refer \cite{Layton2002error,Lay96_SIAM_JSC,lady1991} for proof.
	\end{proof}
Let $\mathcal{T}_{h}$ be the edge-to-edge triangulation of the domain $\Omega$ with diameter $h>0$. $X^{h} \subset X$ and $Q^{h} \subset Q$ are certain finite element spaces of velocity and pressure respectively. The divergence-free subspace of $X^{h}$ is 
\begin{gather*}
		V^{h}:= \Big\{ v^{h} \in X^{h}: (p^{h}, \nabla \cdot v^{h} ) = 0, \quad \forall p^{h} \in Q^{h} \Big\}.
\end{gather*}
Given $(w,q) \in X \times Q$, the finite element pair $(X^{h},Q^{h})$ satisfies the approximation 
theorem (See \cite{Cia78_NHPC,BS08_SpringerNY}): for any $r,s \in \{ 0 \} \cup \mathbb{N}$ and $\ell \in \{0, 1\}$,
\begin{align}
\label{eq:approx-thm}
		\inf_{v^{h} \in X^{h}} \| w - v^{h} \|_{\ell} &\leq C h^{r+\ell-1} \| w \|_{r+1}, 
		\qquad \text{for } \ u \in (H^{r+1})^{d} \cap X,  \notag \\
		\inf_{p^{h} \in Q^{h}} \| q - p^{h} \| &\leq C h^{s+1} \| q \|_{s+1}, 
		\qquad \quad \text{for } \  q \in H^{s+1} \cap Q,   
\end{align}
where $r$ and $s$ are highest degree of polynomials for $X^{h}$ and $Q^{h}$ respectively. We need the $L^{p}-L^{2}$-type inverse inequality \cite{Lay96_SIAM_JSC}.
\begin{theorem}
\label{thm:Lp-L2}
Let $\Theta$ be the minimum angle in the triangulation of domain $\Omega \subset \mathbb{R}^{d}$ ($d = 2,3$) 
and $X^{h}$ be the finite element space with highest polynomial degree $r$. For any $v^{h} \in X^{h}$ and $2 \leq p < \infty$, there is a constant $C = C(\Theta,p,r)>0$ such that
\begin{gather}
			\label{eq:Lp-L2}
			\| \nabla^{h} v^{h} \|_{0,p} \leq C h^{\frac{d}{2}(\frac{2-p}{p})} \| \nabla^{h} v^{h} \|,
\end{gather}
where $\nabla^{h}$ is the element-wise defined gradient operator.
\end{theorem}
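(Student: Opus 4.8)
The plan is to prove this inverse inequality by the classical element-by-element scaling argument, reducing everything to a single fixed reference element on which all norms on the finite-dimensional polynomial space are comparable.

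First I would localize. Since $\nabla^{h} v^{h}$ is defined element-wise and, on each $T \in \mathcal{T}_{h}$, coincides with a (vector-valued) polynomial of degree at most $r-1$, I would write
\[
\| \nabla^{h} v^{h} \|_{0,p}^{p} = \sum_{T \in \mathcal{T}_{h}} \| \nabla v^{h} \|_{L^{p}(T)}^{p},
\]
and establish the analogous bound one element at a time. For a fixed $T$, let $F_{T}(\hat{x}) = B_{T}\hat{x} + b_{T}$ be the affine map from the reference simplex $\hat{T}$ onto $T$ and set $\hat{\phi} := (\nabla v^{h}) \circ F_{T}$. The change-of-variables formula gives $\| \nabla v^{h} \|_{L^{p}(T)} = |\det B_{T}|^{1/p} \| \hat{\phi} \|_{L^{p}(\hat{T})}$, and likewise with $p$ replaced by $2$. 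On $\hat{T}$ the function $\hat{\phi}$ lives in the finite-dimensional space of polynomials of degree at most $r-1$, on which all norms are equivalent, so $\| \hat{\phi} \|_{L^{p}(\hat{T})} \leq C(p,r) \| \hat{\phi} \|_{L^{2}(\hat{T})}$. Combining the three facts yields the element bound
\[
\| \nabla v^{h} \|_{L^{p}(T)} \leq C(p,r)\, |\det B_{T}|^{\frac{1}{p} - \frac{1}{2}} \, \| \nabla v^{h} \|_{L^{2}(T)}.
\]

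Next I would bring in the geometry. Because $|\det B_{T}| = \mathrm{meas}(T)/\mathrm{meas}(\hat{T})$, the minimum-angle (shape-regularity) hypothesis gives $|\det B_{T}| \sim h_{T}^{d}$ with constants depending only on $\Theta$, hence $|\det B_{T}|^{1/p - 1/2} \leq C(\Theta,p)\, h_{T}^{\frac{d}{2}\frac{2-p}{p}}$. Since $\frac{2-p}{p} \leq 0$ for $p \geq 2$ and the mesh is (quasi-)uniform of diameter $h$, I may replace $h_{T}$ by $h$ and obtain the uniform element estimate $\| \nabla v^{h} \|_{L^{p}(T)} \leq C h^{\frac{d}{2}\frac{2-p}{p}} \| \nabla v^{h} \|_{L^{2}(T)}$. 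Finally, raising to the $p$-th power, summing over $T$, invoking the sequence embedding $\| \cdot \|_{\ell^{p}} \leq \| \cdot \|_{\ell^{2}}$ (valid precisely because $p \geq 2$) to pass from $\sum_{T} \| \nabla v^{h} \|_{L^{2}(T)}^{p}$ to $\big( \sum_{T} \| \nabla v^{h} \|_{L^{2}(T)}^{2} \big)^{p/2}$, and taking $p$-th roots, delivers the claimed global inequality.

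The main obstacle is correct bookkeeping of the $h$-dependence: the exponent is negative, so the passage from the local $h_{T}$ to the global $h$ relies on the elements not degenerating in size (quasi-uniformity, implicit in the phrase ``diameter $h$''), rather than on shape regularity alone. One must also verify that the final constant depends genuinely only on $(\Theta,p,r)$, which amounts to checking that both the shape-regularity constant in $|\det B_{T}| \sim h_{T}^{d}$ and the reference-element norm-equivalence constant are uniform over the whole family of elements. Everything else is a routine change of variables together with the elementary $\ell^{p}$-$\ell^{2}$ comparison.
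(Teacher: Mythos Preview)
Your argument is correct and is exactly the classical scaling-to-a-reference-element proof of this inverse estimate; the paper does not actually give a proof but simply cites \cite{Lay96_SIAM_JSC}, whose argument is precisely the one you have outlined. Your explicit remark that replacing $h_{T}$ by the global $h$ requires quasi-uniformity (not only the minimum-angle condition) is a useful clarification that the paper's statement leaves implicit.
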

\begin{proof}
See \cite[p.349-350]{Lay96_SIAM_JSC} for proof.
\end{proof}
We assume that $(X^h, Q^h)$ satisfies the discrete inf-sup condition:
\begin{gather*}
		\inf_{p^h\in Q^h}\sup_{ v^h\in X^h}\frac{(p^h,\nabla \cdot  v^h)}{\|p^h\|\|\nabla  v^h\|}\geq C>0,
\end{gather*}
where $C>0$ is some constant independent of $h$. We define the Stokes projection $\Pi : V \times Q \rightarrow V^{h} \times Q^{h}$ as follows: given the pair $(w,q) \in V \times Q$, the Stokes projection $\Pi(w,q) = \big( \mathcal{W}, \mathcal{Q} \big)$ satisfies 
\begin{equation}\label{eq:Stokes-def}
\begin{aligned}
		&\nu (\nabla w, \nabla v^{h}) - (q, \nabla \cdot v^{h}) 
		= \nu (\nabla \mathcal{W}, \nabla v^{h}) - (\mathcal{Q}, \nabla \cdot v^{h}),  \\&
		(p^{h}, \nabla \cdot \mathcal{W}) = 0, \qquad \forall (v^{h}, p^{h}) \in X^{h} \times Q^{h}.
\end{aligned}
\end{equation}
The above Stokes projection on $V^{h} \times Q^{h}$ is well defined since $X \subset (H_{0}^{1}(\Omega))^{d}$ if the domain $\Omega$ is bounded. We need the following approximation of Stokes projection (see \cite{GR86_Springer,Joh16_Springer} for proof)
\begin{equation}
		\label{eq:Stoke-Approx}
  \begin{aligned}
		&\| w - \mathcal{W} \| \leq C h \big( \nu^{-1} \inf_{q^{h} \in Q^{h}} \| q - p^{h} \| 
		+ \inf_{v^{h} \in X^{h}} | w - v^{h} |_{1} \big),   \\&
		\| w - \mathcal{W} \|_{1} \leq C \big( \nu^{-1} \inf_{q^{h} \in Q^{h}} \| q - p^{h} \| 
		+ \inf_{v^{h} \in X^{h}} | w - v^{h} |_{1} \big). 
\end{aligned}
\end{equation}
\section{The variable step DLN method for CSM}\label{sec:fd}
We denote $w(t_{n})$ by $w_{n}$ and $q(t_{n})$ by $q_{n}$ in 
the CSM in \eqref{csm0}. $w_{n}^{h} \in X^{h}$ and $q_{n}^{h} \in Q^{h}$ represent the DLN solutions of $w_{n}$ and $q_{n}$ respectively. 
 	For convenience, we denote 
	\begin{gather*}
		t_{n,\beta} = \sum_{\ell=0}^{2} \beta_{\ell}^{(n)} t_{n-1+\ell}, \qquad
		w_{n,\beta} = \sum_{\ell=0}^{2} \beta_{\ell}^{(n)} w(t_{n-1+\ell}), \qquad
		w_{n,\beta}^{h} = \sum_{\ell=0}^{2} \beta_{\ell}^{(n)} w_{n-1+\ell}^{h}, \\
		q_{n,\beta} = \sum_{\ell=0}^{2} \beta_{\ell}^{(n)} q(t_{n-1+\ell}), \qquad
		q_{n,\beta}^{h} = \sum_{\ell=0}^{2} \beta_{\ell}^{(n)} q_{n-1+\ell}^{h}, \qquad
		f_{n,\beta} = \sum_{\ell=0}^{2} \beta_{\ell}^{(n)} f(t_{n-1+\ell}),
	\end{gather*}
and represent the average time step $\alpha_{2}k_{n} - \alpha_{0}k_{n-1}$ by $\widehat{k}_{n}$. The variational formulation of the variable time-stepping DLN scheme (with grad-div stabilizer \cite{CELR11_SIAMNA}) in \eqref{eq:DLN-CSM-Alg} is: given $ w_n^h,\  w_{n-1}^h\in X^h$ and $q_{n}^h,\ q_{n-1}^h\in Q^h$, find $ w_{n+1}^h$ and $q_{n+1}^h$ satisfying
\begin{equation}\label{eq:weak-DLN-CSM-Alg}
\begin{aligned}
    &\Big(\frac{\alpha_2 w_{n+1}^h+\alpha_1 w_{n}^h+\alpha_0 w_{n-1}^h}{\widehat{k_n}}, v^h\Big)+\frac{C_s^4\delta^2}{\mu^2} \Big(\frac{\alpha_2 \nabla w_{n+1}^h + \alpha_1 \nabla w_{n}^h +\alpha_0 \nabla w_{n-1}^h}{\widehat{k_n}}, \nabla v^h \Big)
    \\& +\nu(\nabla w_{n,\beta}^h, \nabla v^h) +b^*( w_{n,\beta}^h, w_{n,\beta}^h, v^h)
		+ \gamma (\nabla \cdot w_{n,\beta}^{h}, \nabla \cdot v^{h})-(q_{n,\beta}^h,\nabla \cdot {v^h})
  \\& +\Big((C_s\delta)^2 |\nabla w_{n,\beta}^h |\nabla w_{n,\beta}^h, \nabla v^h\Big) 
		=(f_{n,\beta}, v^h), \qquad \forall  v^h\in X^h, 
  \\& (\nabla \cdot {w_{n,\beta}^h},p^h)=0, \qquad  \forall p^h\in Q^h,
 \end{aligned}
\end{equation}
where constant $\gamma > 0$ needs to be decided by specific problems.	
\begin{confidential}
    \color{blue}
Under the discrete inf-sup condition, (\ref{v}) is equivalent to the following:
\begin{align}\label{v}
 &\Bigg(\frac{\alpha_2 w_{n+1}^h+\alpha_1 w_{n}^h+\alpha_0 w_{n-1}^h}{\widehat{k_n}}, v^h\Bigg)+\nu(\grad w_{n,\beta}^h,\grad  v^h)\notag
 \\&+\frac{C_s^4\delta^2}{\mu^2}\Bigg(\frac{\alpha_2\grad w_{n+1}^h+\alpha_1\grad w_{n}^h+\alpha_0\grad w_{n-1}^h}{\widehat{k_n}},\grad  v^h\Bigg)
 +b^*( w_{n,\beta}^h, w_{n,\beta}^h, v^h)\notag
 \\&+\Big((C_s\delta)^2|\grad w_{n,\beta}^h|\grad w_{n,\beta}^h,\grad  v^h\Big)=(f(t_{n,\beta}), v^h),\ \forall  v^h\in X^h.
\end{align}
\normalcolor
\end{confidential}
Let $\widetilde{w_{n}^h}$ denote the standard (second order) linear extrapolation\cite{layton2020doubly} of $w_{n}^h$
$$\widetilde{w_{n}^h}=\beta_2^{(n)}\Bigg\{ \bigg(1+\frac{k_{n}}{k_{n-1}}\bigg)w_n^h-\bigg(\frac{k_{n}}{k_{n-1}}\bigg)w_{n-1}^h\Bigg\}+\beta_1^{(n)} w_{n}^h+\beta_0^{(n)} w_{n-1}^h.$$
After applying the linearly implicit DLN scheme for time discretization, we get the following  discretization:
\begin{equation}\label{v2}
\begin{aligned}
 &\Bigg(\frac{\alpha_2 w_{n+1}^h+\alpha_1 w_{n}^h+\alpha_0 w_{n-1}^h}{\widehat{k_n}}, v^h\Bigg)+\nu(\grad w_{n,\beta}^h,\grad  v^h)
\\&+\frac{C_s^4\delta^2}{\mu^2}\Bigg(\frac{\alpha_2\grad w_{n+1}^h+\alpha_1\grad w_{n}^h+\alpha_0\grad w_{n-1}^h}{\widehat{k_n}},\grad  v^h\Bigg)
 +b^*( \widetilde{w_{n}^h}, w_{n,\beta}^h, v^h)+\gamma (\nabla \cdot w_{n,\beta}^{h}, \nabla \cdot v^{h})
 \\&-(q_{n,\beta}^h,\div{ v^h})+\Big((C_s\delta)^2|\grad \widetilde{w_{n}^h}|\grad w_{n,\beta}^h,\grad  v^h\Big)=(f(t_{n,\beta}), v^h),\ \forall  v^h\in X^h, \\&
 (\div{ w_{n,\beta}^h},p^h)=0, \  \forall p^h\in Q^h.
\end{aligned}
\end{equation}
\section{Numerical Analysis}
	\label{sec:Numerical-Analysis}
	We define the discrete Bochner space with time grids $\{ t_{n} \}_{n=0}^{N}$ on time interval $[0, T]$,
	\begin{align*}
		\ell^{\infty} \big(0,N;(W^{m,p})^{d} \big) 
		:=& \big\{ f(\cdot, t) \in (W^{m,p})^{d}: \| |f| \|_{\infty,m,p} < \infty \big\}, \\
		\ell^{p_{1},\beta} \big(0,N;(W^{m,p_{2}})^{d} \big)
		:=& \big\{ f(\cdot, t) \in (W^{m,p_{2}})^{d}: \| |f| \|_{p_{1},m,p_{2},\beta} < \infty \big\}, 
	\end{align*} 
	where the corresponding discrete norms are 
	\begin{gather*}
		\| |f| \|_{\infty,m,p}:= \max_{0 \leq n \leq N} \| f(\cdot , t_{n}) \|_{m,p}, \quad 
		\| |f| \|_{p_{1},m,p_{2},\beta}:= 
		\Big( \sum_{n=1}^{N} (k_{n} + k_{n-1}) \|f(\cdot, t_{n,\beta}) \|_{m,p_{2}}^{p_{1}} \Big)^{1/p_{1}}.
	\end{gather*}
\begin{definition}\label{g}
For $0\leq \theta\leq 1$, define the semi-positive symmetric definite matrix $G(\theta)$ by
\begin{equation*}
G(\theta) = 
\begin{bmatrix}
\frac{1}{4}(1+\theta)\mathbb{I}_d & 0 \\
0 & \frac{1}{4}(1-\theta)\mathbb{I}_d 
\end{bmatrix}.
\end{equation*}
\end{definition}
We present two Lemmas about stability and consistency of the DLN method.
	\begin{lemma}
		\label{lemma:G-stable}
		Let $\{ y_{n} \}_{n=0}^{N}$ be any sequence in $L^{2}(\Omega)$. For any $\theta \in [0,1]$ 
		and $n \in \{1, 2, \cdots, N-1 \}$, we have 
		\begin{gather}
			\label{eq:G-identity}
			\Big( \sum_{\ell=0}^{2} \alpha_{\ell} y_{n-1+\ell}, 
			\sum_{\ell=0}^{2} \beta_{\ell}^{(n)} y_{n-1+\ell} \Big)
			= 
			\begin{Vmatrix}
				y_{n+1} \\ y_{n}
			\end{Vmatrix}_{G(\theta)}
			-  
			\begin{Vmatrix}
				y_{n} \\ y_{n-1}
			\end{Vmatrix}_{G(\theta)}
			+ \Big\| \sum_{\ell=0}^{2} \lambda_{\ell}^{(n)} y_{n-1+\ell} \Big\|^{2},
		\end{gather}
		where the $\| \cdot \|_{G(\theta)}$-norm is 
		\begin{gather}
			\label{eq:G-norm-def}
			\begin{Vmatrix}
				u \\ v
			\end{Vmatrix}_{G(\theta)}
			= \frac{1}{4}(1+\theta) \| u \|^{2} + \frac{1}{4}(1-\theta) \| v \|^{2}, 
			\qquad \forall u,v \in L^{2}(\Omega),
		\end{gather}
		and the coefficients $\{ \lambda_{\ell}^{(n)} \}_{\ell=0}^{2}$ are
		\begin{gather}
			\label{eq:G-coefficients}
			\lambda_{1}^{(n)} = - \frac{\theta (1 - \theta^{2})}{\sqrt{2} (1 + \varepsilon_{n} \theta)},
			\qquad \lambda_{2}^{(n)} = - \frac{1 - \varepsilon_{n}}{2} \lambda_{1}^{(n)}, 
			\qquad \lambda_{0}^{(n)} = - \frac{1 + \varepsilon_{n}}{2} \lambda_{1}^{(n)}.
		\end{gather}
	\end{lemma}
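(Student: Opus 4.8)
The plan is to reduce the claimed identity to a finite algebraic one and verify it by matching coefficients. Both sides of \eqref{eq:G-identity} are quadratic forms in the three vectors $y_{n-1},y_n,y_{n+1}$ with respect to the $L^2$ inner product: the left-hand side is $\sum_{i,j=0}^2 \alpha_i\beta_j^{(n)}(y_{n-1+i},y_{n-1+j})$, the difference of $G(\theta)$-norms expands, using \eqref{eq:G-norm-def} and \Cref{g}, into the diagonal form $\tfrac14(1+\theta)\|y_{n+1}\|^2-\tfrac12\theta\|y_n\|^2-\tfrac14(1-\theta)\|y_{n-1}\|^2$, and the last term is $\sum_{i,j=0}^2 \lambda_i^{(n)}\lambda_j^{(n)}(y_{n-1+i},y_{n-1+j})$. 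Since $(\cdot,\cdot)$ is symmetric and bilinear, the identity holds for every sequence $\{y_n\}$ in $L^2(\Omega)$ if and only if the associated symmetric $3\times 3$ coefficient matrices agree. Hence it suffices to establish the scalar matrix identity
\[
\tfrac12\big(\alpha_i\beta_j^{(n)}+\alpha_j\beta_i^{(n)}\big)=D_{ij}+\lambda_i^{(n)}\lambda_j^{(n)},\qquad 0\le i,j\le 2,
\]
where $D=\diag\big(-\tfrac14(1-\theta),\,-\tfrac12\theta,\,\tfrac14(1+\theta)\big)$ collects the telescoping $G$-norm coefficients.

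The substance of the lemma is that the residual matrix $R:=\tfrac12\big(\alpha(\beta^{(n)})^{\top}+\beta^{(n)}\alpha^{\top}\big)-D$ is a rank-one, positive semidefinite matrix, namely the outer product $\lambda^{(n)}(\lambda^{(n)})^{\top}$. I would verify this entrywise after inserting the explicit $\alpha_\ell$ and $\beta_\ell^{(n)}$. The three diagonal equations fix $(\lambda_i^{(n)})^2=\alpha_i\beta_i^{(n)}-D_{ii}$, and in particular pin down the central pivot $(\lambda_1^{(n)})^2$, from which the formula for $\lambda_1^{(n)}$ in \eqref{eq:G-coefficients} follows by taking a square root. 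The three off-diagonal equations, for which $D_{ij}=0$ since $G(\theta)$ is diagonal, then force $\lambda_i^{(n)}\lambda_j^{(n)}=\tfrac12(\alpha_i\beta_j^{(n)}+\alpha_j\beta_i^{(n)})$; matching the $(0,1)$ and $(1,2)$ entries against $\lambda_1^{(n)}$ reproduces exactly the factors $-\tfrac{1+\varepsilon_n}{2}$ and $-\tfrac{1-\varepsilon_n}{2}$ appearing in the expressions for $\lambda_0^{(n)}$ and $\lambda_2^{(n)}$.

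The main obstacle is the bookkeeping in the cross terms, where the $\varepsilon_n$-dependence of $\beta_\ell^{(n)}$ enters nonlinearly through the denominators $(1+\varepsilon_n\theta)^2$. The sharpest consistency test is the $(0,2)$ entry, the coupling between $y_{n-1}$ and $y_{n+1}$: since $D_{02}=0$ one must show $\tfrac12(\alpha_0\beta_2^{(n)}+\alpha_2\beta_0^{(n)})=\lambda_0^{(n)}\lambda_2^{(n)}=\tfrac14(1-\varepsilon_n^2)(\lambda_1^{(n)})^2$, and this closes only because of the algebraic cancellations built into the DLN coefficients, above all the relation $\alpha_0\alpha_2=-\tfrac14(1-\theta^2)$ together with the definition $\varepsilon_n=(k_n-k_{n-1})/(k_n+k_{n-1})$. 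I would therefore organize the computation so that all six entries reduce to elementary polynomial identities in $\theta$ and $\varepsilon_n$; once the factorization $R=\lambda^{(n)}(\lambda^{(n)})^{\top}$ is confirmed, positive semidefiniteness (hence the nonnegativity of the extra term, which is the point of $G$-stability) is automatic, and \eqref{eq:G-identity} follows.
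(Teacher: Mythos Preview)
Your proposal is correct and is essentially the same approach as the paper's, which simply states that the identity \eqref{eq:G-identity} follows by ``algebraic calculation'' without giving details. Your coefficient-matching via the symmetric $3\times 3$ matrix identity is precisely how such a calculation is organized, and your outline is more detailed than what the paper actually records.
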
 
	\begin{proof}
		The proof of identity in \eqref{eq:G-identity} is just algebraic calculation.
	\end{proof}
	\begin{remark}
		If we replace $L^{2}(\Omega)$ by Euclidian space $\mathbb{R}^{d}$, the identity in \eqref{eq:G-identity} still holds and implies $G$-stability of the DLN method. (See \cite[p.2]{Dah76_Tech_RIT} for the definition of $G$-stability.)
	\end{remark}

	\begin{lemma}
		\label{lemma:DLN-consistency}
		Let $Y$ be any Banach space over $\mathbb{R}$ with norm $\| \cdot \|_{Y}$, $\{ t_{n} \}_{n=0}^{N}$ be time grids on time interval $[0,T]$ and $u$ be the mapping from $[0,T]$ to $Y$. 
		We set 
		\begin{gather*}
			k_{\rm{max}} = \max_{0 \leq n \leq N-1} \{ k_{n} \},
		\end{gather*} 
		and assume that the mapping $u(t)$ is smooth enough about the variable $t$, 
		then for any $\theta \in [0,1]$,
		\begin{align}
			\label{eq:DLN-consistency}
			\Big\| \sum_{\ell=0}^{2} \beta_{\ell}^{(n)} u(t_{n-1+\ell}) - u(t_{n,\beta}) \Big\|_{Y}^{2}
			\leq& C(\theta) k_{\rm{max}}^{3} \int_{t_{n-1}}^{t_{n+1}} \| u_{tt} \|_{Y}^{2} dt, \notag \\
			\Big\| \frac{1}{\widehat{k}_{n}} \sum_{\ell=0}^{2} \alpha_{\ell} u(t_{n-1+\ell}) - u_{t}(t_{n,\beta})  \Big\|_{Y}^{2} \leq& C(\theta) k_{\rm{max}}^{3} \int_{t_{n-1}}^{t_{n+1}} \| u_{ttt} \|_{Y}^{2} dt.
		\end{align}
	\end{lemma}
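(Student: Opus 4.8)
The plan is to prove both bounds by the same device: expand $u(t_{n-1+\ell})$ in a Taylor series with integral remainder centered at the consistency point $t_{n,\beta}$, and then exploit the moment identities satisfied by the coefficient sets $\{\alpha_\ell\}$ and $\{\beta_\ell^{(n)}\}$ so that all low-order terms telescope away, leaving only a remainder that carries one extra derivative and one extra power of $k_{\rm{max}}$. The identities I would record first are, for the averaging weights, $\sum_{\ell=0}^{2}\beta_\ell^{(n)}=1$ together with $\sum_{\ell=0}^{2}\beta_\ell^{(n)}(t_{n-1+\ell}-t_{n,\beta})=0$ (the latter being exactly the definition $t_{n,\beta}=\sum_\ell\beta_\ell^{(n)}t_{n-1+\ell}$); and for the differencing weights the three conditions $\sum_\ell\alpha_\ell=0$, $\sum_\ell\alpha_\ell(t_{n-1+\ell}-t_{n,\beta})=\widehat{k}_{n}$, and $\sum_\ell\alpha_\ell(t_{n-1+\ell}-t_{n,\beta})^2=0$. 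The first two $\alpha$-identities are immediate from $\alpha_2+\alpha_1+\alpha_0=0$ and the computation $\alpha_2 k_n+(\alpha_1+\alpha_2)k_{n-1}=\tfrac12(\theta+1)k_n+\tfrac12(1-\theta)k_{n-1}=\widehat{k}_{n}$. The vanishing of the second moment is the essential design property of the DLN point $t_{n,\beta}$, and I would verify it algebraically from the explicit forms of $\alpha_\ell$ and $\beta_\ell^{(n)}$, equivalently by checking $t_{n,\beta}=(2\widehat{k}_{n})^{-1}\sum_\ell\alpha_\ell t_{n-1+\ell}^{2}$.

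For the first estimate I would write, in $Y$, $u(t_{n-1+\ell})=u(t_{n,\beta})+u_t(t_{n,\beta})(t_{n-1+\ell}-t_{n,\beta})+\int_{t_{n,\beta}}^{t_{n-1+\ell}}(t_{n-1+\ell}-s)\,u_{tt}(s)\,ds$. Multiplying by $\beta_\ell^{(n)}$ and summing, the constant term reproduces $u(t_{n,\beta})$ by $\sum_\ell\beta_\ell^{(n)}=1$ and the linear term vanishes by the first-moment identity, so that $\sum_\ell\beta_\ell^{(n)}u(t_{n-1+\ell})-u(t_{n,\beta})=\sum_\ell\beta_\ell^{(n)}\int_{t_{n,\beta}}^{t_{n-1+\ell}}(t_{n-1+\ell}-s)u_{tt}(s)\,ds$. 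Taking $Y$-norms, using $|t_{n-1+\ell}-s|\le k_{n-1}+k_n\le 2k_{\rm{max}}$ on the subinterval between $t_{n,\beta}$ and $t_{n-1+\ell}$, and applying Cauchy--Schwarz in $s$ (the integral being dominated by the one over $[t_{n-1},t_{n+1}]$), each summand is bounded by $Ck_{\rm{max}}^{3/2}(\int_{t_{n-1}}^{t_{n+1}}\|u_{tt}\|_Y^{2}\,dt)^{1/2}$; squaring and absorbing the finite, $\theta$-dependent bound on $|\beta_\ell^{(n)}|$ into $C(\theta)$ yields the claim.

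The second estimate is analogous but expands one order further, to the $u_{tt}$ term with integral remainder in $u_{ttt}$. After multiplying by $\alpha_\ell$ and summing, the three $\alpha$-moment identities kill the $u(t_{n,\beta})$ and the $u_{tt}(t_{n,\beta})$ contributions and turn the $u_t$ contribution into exactly $\widehat{k}_{n}u_t(t_{n,\beta})$, so that dividing by $\widehat{k}_{n}$ leaves $\widehat{k}_{n}^{-1}\sum_\ell\alpha_\ell\,\tfrac12\int_{t_{n,\beta}}^{t_{n-1+\ell}}(t_{n-1+\ell}-s)^2 u_{ttt}(s)\,ds$. Cauchy--Schwarz bounds each remainder norm by $\tfrac12 d_\ell^{5/2}(\int_{t_{n-1}}^{t_{n+1}}\|u_{ttt}\|_Y^{2}\,dt)^{1/2}$ with $d_\ell:=|t_{n-1+\ell}-t_{n,\beta}|\le k_{n-1}+k_n$; writing $d_\ell^{5/2}=d_\ell^{3/2}\cdot d_\ell$ and using $d_\ell^{3/2}\le(2k_{\rm{max}})^{3/2}$, the estimate reduces to controlling the geometric factors $|\alpha_\ell|\,d_\ell/\widehat{k}_{n}$.

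This last reduction is where the real care is needed and is the main obstacle. Since $\widehat{k}_{n}=\tfrac12(1+\theta)k_n+\tfrac12(1-\theta)k_{n-1}=\tfrac12(k_n+k_{n-1})(1+\theta\varepsilon_{n})$ and $\varepsilon_{n}\in(-1,1)$, one always has $1+\theta\varepsilon_{n}>0$, but for $\theta$ near $1$ and $\varepsilon_{n}$ near $-1$ the value $\widehat{k}_{n}$ can be far smaller than $k_{n-1}+k_n$, so a naive bound does not collapse $d_\ell/\widehat{k}_{n}$ to a constant. I would resolve this by keeping the coefficient weights rather than discarding them: for fixed $\theta<1$ one has $1+\theta\varepsilon_{n}\ge 1-\theta>0$, whence $(k_{n-1}+k_n)/\widehat{k}_{n}\le 2/(1-\theta)$ is bounded and $d_\ell/\widehat{k}_{n}$ is absorbed into $C(\theta)$; while at $\theta=1$ the weight $\alpha_0=\tfrac12(\theta-1)$ vanishes, removing the single summand ($\ell=0$) whose distance to $t_{n,\beta}$ is not controlled by $\widehat{k}_{n}=k_n$, and the surviving $\ell=1,2$ terms satisfy $|t_{n-1+\ell}-t_{n,\beta}|=k_n/2$ directly. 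In either regime the factors $|\alpha_\ell|\,d_\ell/\widehat{k}_{n}$ and $|\beta_\ell^{(n)}|$ stay bounded uniformly in $\varepsilon_{n}\in(-1,1)$ for each fixed $\theta\in[0,1]$, which is precisely the fact that closes both estimates and fixes the constant $C(\theta)$.
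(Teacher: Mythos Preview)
Your proposal is correct and follows essentially the same route as the paper's proof, which merely says to Taylor-expand $u(t_{n-1+\ell})$ at $t_{n,\beta}$ and apply H\"older's inequality. Your write-up is far more detailed---in particular the verification of the $\alpha$-moment identities and the careful handling of $|\alpha_\ell|\,d_\ell/\widehat{k}_{n}$ at and near $\theta=1$---but the underlying mechanism (cancellation of low-order Taylor terms via the moment conditions, then Cauchy--Schwarz on the integral remainder) is exactly the intended one.
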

	\begin{proof}
		We use Taylor's Theorem and expand $u(t_{n+1})$, $u(t_{n})$, $u(t_{n-1})$ at $t_{n,\beta}$.
		By H$\ddot{\rm{o}}$lder's inequality, we obtain \eqref{eq:DLN-consistency}.
	\end{proof}
\subsection{Stability of the DLN scheme for the CSM}\label{sec:Stability}
The DLN method is a one parameter family of $A$-stable, 2 step, $G$-stable methods $(0\leq \theta \leq 1)$. It reduces to one-step midpoint scheme if $\theta=1$. The important property of it is $G$-stability matrix does not depend on the time step ratio but on $\theta$ in \cref{lemma:G-stable}. In this section, we prove the unconditional, long time, variable time step energy-stability of \eqref{eq:weak-DLN-CSM-Alg} by using G-stability property (\cref{lemma:G-stable}) of the method.
\begin{theorem}\label{thm:Stab-Uncond}
The one-leg variable time step DLN scheme by \eqref{eq:weak-DLN-CSM-Alg} is unconditionally, long-time stable, i.e. for any integer $N>1$,
\begin{equation}\label{eq:DLN-Stability}
   \begin{aligned}
    		&\frac{1\!+\!\theta}{4} \big( \| w_{N}^h \|^2 \!+\! \frac{C_s^4 \! \delta^2}{\mu^2} \| \nabla w_{N}^h \|^2 \big) 
			+ \frac{1\!-\!\theta}{4} \big( \| w_{N-1}^h \|^2 \!+\! \frac{C_s^4 \! \delta^2}{\mu^2} \| \nabla w_{N-1}^h \|^2 \big)  \\&
    		\!+\! \sum_{n\!=\!1}^{N\!-\!1} \! \Big( \! \big\|\! \sum_{\ell\!=\!0}^2 \lambda_{\ell}^{(n)} w_{n\!-\!1\!+\!\ell}^h \!\big\|^2
			\!+\! \frac{C_s^4 \! \delta^2}{\mu^2} \big\|\! \sum_{\ell\!=\!0}^2 \!\lambda_{\ell}^{(n)} \nabla w_{n\!-\!1\!+\!\ell}^h \!\big\|^2 \! \Big) 
			\!+\!  \sum_{n\!=\!1}^{N\!-\!1} \widehat{k}_{n} \big( \!\frac{\nu}{2} \|\! \nabla \! w_{n\!,\!\beta}^{h} \!\|^{2} \!+\! \gamma \! \| \! \nabla \!\cdot \!w_{n\!,\!\beta}^{h} \! \|^{2} \!\big)
			 \\&
    		+\! \sum_{n\!=\!1}^{N\!-\!1} \widehat{k}_{n} \int_{\Omega} \big[(C_s\delta)^2 |\nabla w^h_{n,\beta}| \big] |\nabla w_{n,\beta}^{h}|^2 d x
			\!\leq \!\frac{C(\theta) k_{\rm{max}}^{4}}{\nu} \| f_{tt} \|_{L^{2}(0,T;X')}^{2}
		    \!+\! \frac{1}{\nu} \| |f| \|_{2,-1,2,\beta}^{2} 
			 \\&
    		+\frac{1\!+\!\theta}{4} \big( \| w_{1}^h\|^2 \!+\! \frac{C_s^4 \! \delta^2}{\mu^2}\|\nabla w_{1}^h\|^2 \big)
			+\frac{1\!-\!\theta}{4} \big( \| w_{0}^h\|^2 \!+\! \frac{C_s^4 \! \delta^2}{\mu^2}\|\nabla w_{0}^h\|^2 \big). 
        \end{aligned}
		\end{equation}
	\end{theorem}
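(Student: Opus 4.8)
The plan is to test the variational formulation \eqref{eq:weak-DLN-CSM-Alg} with the choice $v^h = w_{n,\beta}^h$ and then to exploit the $G$-stability identity of \cref{lemma:G-stable} term by term. First I would observe that two terms vanish under this choice: the convective term, because $b^*(w_{n,\beta}^h, w_{n,\beta}^h, w_{n,\beta}^h) = 0$ by the skew-symmetry \eqref{eq:Skew-Symmetry} of \cref{trilinear1}; and the pressure term, because $q_{n,\beta}^h \in Q^h$ and the discrete incompressibility constraint in \eqref{eq:weak-DLN-CSM-Alg} forces $(q_{n,\beta}^h, \nabla \cdot w_{n,\beta}^h) = 0$. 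The Smagorinsky term becomes $(C_s\delta)^2 \int_\Omega |\nabla w_{n,\beta}^h|^3 \, dx \ge 0$, while the viscous and grad-div terms become $\nu\|\nabla w_{n,\beta}^h\|^2$ and $\gamma\|\nabla \cdot w_{n,\beta}^h\|^2$; all three carry a favorable dissipative sign. A useful feature to note up front is that no discrete Gr\"onwall inequality is needed: the nonlinearities are handled \emph{exactly}, so the only term that actually requires estimation is the forcing.

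Next I would multiply the identity through by the average step $\widehat{k}_n = \alpha_2 k_n - \alpha_0 k_{n-1} = \tfrac{1+\theta}{2}k_n + \tfrac{1-\theta}{2}k_{n-1}$, which is a convex combination of $k_n, k_{n-1}$ and hence positive for $\theta \in [0,1]$. Applying \cref{lemma:G-stable} twice converts the two time-difference terms into telescoping $G$-norm differences: once to the sequence $\{w_n^h\}$ in $L^2(\Omega)$, producing the $\lambda$-term $\|\sum_\ell \lambda_\ell^{(n)} w_{n-1+\ell}^h\|^2$, and once componentwise to $\{\nabla w_n^h\}$ for the $C_s^4\delta^2\mu^{-2}$-weighted gradient term, producing $\|\sum_\ell \lambda_\ell^{(n)} \nabla w_{n-1+\ell}^h\|^2$. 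Summing over $n = 1, \ldots, N-1$ telescopes the $G$-norm differences down to boundary contributions at $N$ and at $1$; expanding these via \eqref{eq:G-norm-def} yields the $\|w_N^h\|^2,\ \|\nabla w_N^h\|^2,\ \|w_{N-1}^h\|^2,\ \|\nabla w_{N-1}^h\|^2$ terms on the left of \eqref{eq:DLN-Stability} and sends the $w_1^h, w_0^h$ initial contributions to the right.

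For the forcing I would write $\widehat{k}_n (f_{n,\beta}, w_{n,\beta}^h) \le \widehat{k}_n\|f_{n,\beta}\|_{-1}\|\nabla w_{n,\beta}^h\|$ and apply Young's inequality to split off $\tfrac{\nu}{2}\widehat{k}_n\|\nabla w_{n,\beta}^h\|^2$, which is absorbed into the viscous dissipation and leaves the factor $\tfrac{\nu}{2}$ seen in \eqref{eq:DLN-Stability}. The remaining piece is $\tfrac{1}{2\nu}\sum_{n} \widehat{k}_n\|f_{n,\beta}\|_{-1}^2$. Here lies the only real bookkeeping: $f_{n,\beta}$ is the $\beta$-weighted average $\sum_\ell \beta_\ell^{(n)} f(t_{n-1+\ell})$, not the pointwise value $f(t_{n,\beta})$, so I would split $\|f_{n,\beta}\|_{-1}^2 \le 2\|f(t_{n,\beta})\|_{-1}^2 + 2\|f_{n,\beta} - f(t_{n,\beta})\|_{-1}^2$ and bound the second (consistency) term by the first estimate of \cref{lemma:DLN-consistency} with $Y = X'$, giving $C(\theta)k_{\rm{max}}^3\int_{t_{n-1}}^{t_{n+1}}\|f_{tt}\|_{-1}^2 \, dt$.

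To finish, on the first piece I would use $\widehat{k}_n \le k_n + k_{n-1}$ so that $\sum_n \widehat{k}_n\|f(t_{n,\beta})\|_{-1}^2 \le \||f|\|_{2,-1,2,\beta}^2$, and on the consistency piece I would use $\widehat{k}_n \le k_{\rm{max}}$ together with the fact that the intervals $[t_{n-1},t_{n+1}]$ overlap at most twice, so that $\sum_n \widehat{k}_n k_{\rm{max}}^3 \int_{t_{n-1}}^{t_{n+1}}\|f_{tt}\|_{-1}^2 \, dt \le C(\theta)k_{\rm{max}}^4\|f_{tt}\|_{L^2(0,T;X')}^2$. Collecting the telescoped boundary terms, the $\lambda$-terms, the three dissipation sums, and these two forcing bounds reproduces \eqref{eq:DLN-Stability} exactly. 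The main obstacle, modest as it is, is precisely this consistent replacement of the discrete forcing $f_{n,\beta}$ by $f(t_{n,\beta})$ and the correct matching of the step weights $\widehat{k}_n$ against $(k_n + k_{n-1})$ and $k_{\rm{max}}$, so that the two forcing norms emerge with the stated constants; everything else follows directly from testing with $w_{n,\beta}^h$ and the exact $G$-stability identity.
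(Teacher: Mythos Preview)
Your proposal is correct and follows essentially the same route as the paper: test \eqref{eq:weak-DLN-CSM-Alg} with $v^h=w_{n,\beta}^h$, kill the convective and pressure terms, apply the $G$-stability identity of \cref{lemma:G-stable} to both the $L^2$ and gradient time-difference terms, bound the forcing via the dual norm and Young's inequality, split $\|f_{n,\beta}\|_{-1}^2$ into the pointwise value plus a consistency remainder handled by \cref{lemma:DLN-consistency}, and telescope. Your write-up is in fact more explicit than the paper's about why $\widehat{k}_n>0$, why the pressure term drops, and how the step weights $\widehat{k}_n$ match against $k_n+k_{n-1}$ and $k_{\max}$ in the final forcing estimate.
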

\begin{proof}
We set $ v^h= w^h_{n,\beta},\ p^h=q_{n,\beta}^h$ in \eqref{eq:weak-DLN-CSM-Alg}. By \cref{trilinear1} and identity \eqref{eq:G-identity} in \cref{lemma:G-stable}, we obtain
			\begin{gather*}
				\Big(\sum_{\ell=0}^{2} \alpha_{\ell} w_{n-1+\ell}^{h}, w_{n,\beta}^h \Big)
				+ \frac{C_{s}^4 \delta^2}{\mu^2} \Big(\sum_{\ell=0}^{2} \alpha_{\ell} \nabla w_{n-1+\ell}^{h},\nabla w_{n,\beta}^{h} \Big) 
				+ \gamma \widehat{k}_{n} \| \nabla \cdot w_{n,\beta}^{h} \|^{2} \\
				+ \widehat{k}_{n} \int_{\Omega} \big( \nu + (C_s\delta)^2 |\nabla  w_{n,\beta}^{h} | \big) 
				|\nabla  w_{n,\beta}^{h}|^2 \ dx 
				= \widehat{k}_{n} ( f_{n,\beta}, w_{n,\beta}^{h} )
				\leq \widehat{k}_{n} \| f_{n,\beta} \|_{-1} \| \nabla w_{n,\beta}^{h} \|.
			\end{gather*}
  The $G$-stability relation \eqref{lemma:G-stable} implies
\begin{equation}\label{eq:stable-eq1}
\begin{aligned}
			&\begin{Vmatrix} w_{n\!+\!1}^h \\  w_{n}^h \end{Vmatrix}_{G(\theta)}^2
			\!-\! \begin{Vmatrix} w_{n}^{h} \\  w_{n\!-\!1}^{h} \end{Vmatrix}_{G(\theta)}^2
			\!+\! \Big\| \sum_{\ell\!=\!0}^{2} \lambda_{\ell}^{(n)} w_{n\!-\!1\!+\!\ell}^{h} \Big\|^2   
			\!+\! \widehat{k}_{n} \int_{\Omega} \big( \frac{\nu}{2} \!+\! (C_s\delta)^2 | \nabla  w_{n,\beta}^{h}| \big)
			|\nabla  w_{n,\beta}^{h} |^2 d x
			 \\
			&+\! \gamma \widehat{k}_{n} \| \nabla \cdot w_{n,\beta}^{h} \|^{2} 
			+\! \frac{C_s^4\delta^2}{\mu^2} \Big(
			\begin{Vmatrix} \nabla w_{n\!+\!1}^{h} \\ \nabla w_{n}^{h} \end{Vmatrix}_{G(\theta)}^2
			\!-\! \begin{Vmatrix} \nabla w_{n}^{h} \\ \nabla w_{n\!-\!1}^{h} \\ \end{Vmatrix}_{G(\theta)}^2
			\!+\! \Big\| \sum_{\ell\!=\!0}^2 \lambda_{\ell}^{(n)} \nabla w_{n\!-\!1\!+\!\ell}^{h} \Big\|^2 \Big) 
			\\
			&\leq  \frac{\widehat{k}_{n}}{2 \nu} \| f_{n,\beta} \|_{-1}^{2}.
\end{aligned}
\end{equation}
		By triangle inequality and \eqref{eq:DLN-consistency} in \cref{lemma:DLN-consistency}, we get,
		\begin{align*}
			\frac{\widehat{k_n}}{2 \nu} \| f_{n,\beta} \|_{-1}^{2}
			\leq& \frac{\widehat{k_n}}{\nu} \| f_{n,\beta} - f(t_{n,\beta}) \|_{-1}^{2} 
			+ \frac{\widehat{k_n}}{\nu} \| f(t_{n,\beta}) \|_{-1}^{2} \\
			\leq& \frac{C(\theta) k_{\rm{max}}^{4}}{\nu} \int_{t_{n-1}}^{t_{n+1}} \| f_{tt} \|_{-1}^{2} dt
			+ \frac{(k_{n} + k_{n-1})}{\nu} \| f(t_{n,\beta}) \|_{-1}^{2}.
		\end{align*}
Summing \eqref{eq:stable-eq1} over $n$ from $1$ to $N-1$, we have desired result \eqref{eq:DLN-Stability}.
\end{proof}
\begin{remark}
		We identify the following quantities from the energy equality in \eqref{eq:DLN-Stability}:
		\begin{enumerate}
			\item Model kinetic energy,
				$$ \mathcal{E}_{N}^{\tt EN} 
				= \frac{1+\theta}{4} \big( \| w_{N}^{h} \|^{2} + \frac{C_s^4 \delta^2}{\mu^{2}} 
				\|\nabla w_{N}^{h} \|^{2} \big)
				+ \frac{1-\theta}{4} \big( \| w_{N-1}^{h} \|^{2} + \frac{C_s^4 \delta^2}{\mu^{2}} 
				\|\nabla w_{N-1}^{h} \|^{2} \big).$$
			\item Energy dissipation due to viscous force, 
			$$ \mathcal{E}_{N}^{\tt VD} = \nu \| \nabla w_{N-1,\beta}^{h} \|^{2}.$$
			\item Eddy viscosity dissipation, 
			$$ \mathcal{E}_{N}^{\tt EVD} = \int_{\Omega} \big[(C_s\delta)^2 |\nabla w_{N-1,\beta}^{h}| \big]
			|\nabla  w_{N-1,\beta}^{h}|^2\ dx.$$   
			\item Numerical dissipation,
			$$ \mathcal{E}_{N}^{\tt ND}=\bigg\|\frac{\sum_{l=0}^{2}\lambda_l^{N-1} w_{N-2+l}^h}{\sqrt{\widehat{k_{N-1}}}}\bigg\|^2+\frac{C_s^4\delta^2}{\mu^2}\bigg\|\frac{\sum_{l=0}^{2}\lambda_l^{N-1} \grad w_{N-2+l}^h}{\sqrt{\widehat{k_{N-1}}}}\bigg\|^2.$$
			$\mathcal{E}_{N}^{\tt ND}$ vanishes if and only if  $\theta\in\{0,1\}.$
			\item The model dissipation originating from the CSM in \eqref{csm0},
			\begin{align*} 
				\mathcal{E}_{N}^{\tt MD} 
				=& \frac{C_s^4 \delta^{2}}{\widehat{k_{N-1}} \mu^{2}} \Big(
				\begin{Vmatrix} \nabla w_{N}^h \\ \nabla w_{N}^h \end{Vmatrix}_{G(\theta)}^2
				-\begin{Vmatrix} \nabla w_{N-1} \\ \nabla w_{N-1} \end{Vmatrix}_{G(\theta)}^2
			    + \Big\| \sum_{\ell=0}^{2} \lambda_{\ell}^{(N-1)} \nabla w_{N-2+\ell}^{h} \Big\|^{2} \Big) \\
			    &+ \int_{\Omega} \big[ (C_s\delta)^2 |\nabla w_{N-1,\beta}^{h}| \big] |\nabla w_{N-1,\beta}^{h} |^2 dx.
			\end{align*}
           Model dissipation in this paper can be positive or negative. When it is positive, it aggregates energy from mean to fluctuations. And when it is negative, energy is being transferred from fluctuations back to mean.
		\end{enumerate}
	\end{remark}
\begin{remark}
The one-leg linearly implicit DLN method by (\ref{v2}) is unconditionally, long-time stable, i.e. for any integer $N>1$,
\begin{equation}\label{eq:DLN-Stability-lagged}
\begin{aligned}
    		&\frac{1\!+\!\theta}{4} \big( \| w_{N}^h \|^2 \!+\! \frac{C_s^4 \! \delta^2}{\mu^2} \| \nabla w_{N}^h \|^2 \big) 
			+ \frac{1\!-\!\theta}{4} \big( \| w_{N-1}^h \|^2 \!+\! \frac{C_s^4 \! \delta^2}{\mu^2} \| \nabla w_{N-1}^h \|^2 \big)  \\& 
    		\!+\! \sum_{n\!=\!1}^{N\!-\!1} \! \Big( \! \big\|\! \sum_{\ell\!=\!0}^2 \lambda_{\ell}^{(n)} w_{n\!-\!1\!+\!\ell}^h \!\big\|^2
			\!+\! \frac{C_s^4 \! \delta^2}{\mu^2} \big\|\! \sum_{\ell\!=\!0}^2 \!\lambda_{\ell}^{(n)} \nabla w_{n\!-\!1\!+\!\ell}^h \!\big\|^2 \! \Big) 
			\!+\!  \sum_{n\!=\!1}^{N\!-\!1} \widehat{k}_{n} \big( \!\frac{\nu}{2} \|\! \nabla \! w_{n\!,\!\beta}^{h} \!\|^{2} \!+\! \gamma \! \| \! \nabla \!\cdot \!w_{n\!,\!\beta}^{h} \! \|^{2} \!\big)
			 \\&
    		+\! \sum_{n\!=\!1}^{N\!-\!1} \widehat{k}_{n} \int_{\Omega} \big[(C_s\delta)^2 |\widetilde{w^h_{n,\beta}}| \big] |\nabla w_{n,\beta}^{h}|^2 d x
			\!\leq \!\frac{C(\theta) k_{\rm{max}}^{4}}{\nu} \| f_{tt} \|_{L^{2}(0,T;X')}^{2}
		    \!+\! \frac{1}{\nu} \| |f| \|_{2,-1,\beta}^{2} 
			 \\&
    		+\frac{1\!+\!\theta}{4} \big( \| w_{1}^h\|^2 \!+\! \frac{C_s^4 \! \delta^2}{\mu^2}\|\nabla w_{1}^h\|^2 \big)
			+\frac{1\!-\!\theta}{4} \big( \| w_{0}^h\|^2 \!+\! \frac{C_s^4 \! \delta^2}{\mu^2}\|\nabla w_{0}^h\|^2 \big). 
\end{aligned}		
\end{equation}
\end{remark}
\subsection{Error Analysis of the DLN Scheme for the CSM}\label{sec:ne}
In this section, we analyze the error between the semi-discrete solution and the fully discrete solution to \eqref{csm0} in \cref{thm:numerial-error} under the following time step condition \footnote[1]{To our best knowledge, time step condition like $\Delta t < \mathcal{O}(\nu^{-3})$ cannot be avoided for fully-implicit schemes in error analysis.}:
\begin{gather} 
		\label{eq:time-condi}
		\frac{C(\theta)}{\nu^{3}} \big( \widehat{k}_{n+1} \| \nabla w_{n+1,\beta} \|^{4}
			+ \widehat{k}_{n} \| \nabla w_{n,\beta} \|^{4} + \widehat{k}_{n-1} \| \nabla w_{n-1,\beta} \|^{4}  \big) < 1, \qquad \forall n.
    \end{gather}
\begin{theorem}\label{thm:numerial-error}
Let $(w(t), q(t))$ be sufficiently smooth, strong solution of the CSM. We assume that the velocity $w \in X$, pressure $q \in Q$, body force $f$ of the CSM in \eqref{csm0} satisfy 
		\begin{gather*}
			w \in \ell^{2,\beta}(0,N;(H^{r+1})^{d}) \cap \ell^{4,\beta}(0,N;(H^{r+1})^{d}) \cap \ell^{3,\beta}(0,N;(H^{r+1})^{d}) 
			\cap \ell^{3,\beta}(0,N;(W^{1,3})^{d}), \\
			w_{t} \in L^{2}\big( 0,T;(H^{r+1})^{d} \big), \qquad w_{ttt} \in L^{2} \big(0,T;(H^{1})^{d} \big),     \\
			w_{tt} \in L^{2}\big(0,T;(H^{r+1})^{d} \big) \cap L^{3}\big(0,T;(W^{1,3})^{d} \big) \cap L^{3}\big(0,T;(H^{r+1})^{d} \big)
			\cap L^{4}\big(0,T; (H^{r+1})^{d} \big), \\
			q \in \ell^{2,\beta}(0,N;H^{s+1}), \qquad
			f_{tt} \in L^{2}(0,T;X')
		\end{gather*}
Under the time step condition in \eqref{eq:time-condi}, the variable time-stepping DLN scheme (with $\theta \in [0,1]$) for the CSM in \eqref{eq:weak-DLN-CSM-Alg} satisfies: for $r,\ s, \in\{0\}\cup \mathbb{N}$ and any integer $N \geq 2$
\begin{equation}\label{eq:error-conclusion}
\begin{aligned}
&\max_{0 \leq n \leq N} \| w_{n}^{h} - w_{n} \| 
			+ C(\theta) \sqrt{\nu} \Big( \sum_{n=1}^{N-1} \widehat{k}_{n} 
			\| \nabla (w_{n,\beta}^{h} - w_{n,\beta}) \|^{2} \Big)^{1/2}  \\&
			\leq \mathcal{O} \big( k_{\max}^{2}, h^{r}, h^{s+1}, \delta h^{\frac{3r}{4}-\frac{d}{8}}, 
			\delta k_{\max}^{3/2} \big).
\end{aligned}
\end{equation}
\end{theorem}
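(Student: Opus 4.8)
The plan is to run the standard energy-based finite element error analysis, using the $G$-stability identity of \cref{lemma:G-stable} to control the DLN time-differencing and the monotonicity structure of \cref{lemma:Monotone-LLC} to control the Smagorinsky term. First I would split the error through the Stokes projection $\Pi(w_n,q_n)=(\mathcal{W}_n,\mathcal{Q}_n)$ defined in \eqref{eq:Stokes-def}, writing $w_n-w_n^h=\eta_n-\phi_n^h$ with $\eta_n:=w_n-\mathcal{W}_n$ and $\phi_n^h:=w_n^h-\mathcal{W}_n\in V^h$. The point of the Stokes projection is that the viscous and pressure contributions of $\eta_n$ cancel against discretely divergence-free test functions, so the projection error enters only through $\|\eta_n\|$ and $\|\nabla\eta_n\|$, which are controlled by \eqref{eq:Stoke-Approx} together with the approximation property \eqref{eq:approx-thm}.

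Next I would write the weak form of the CSM \eqref{csm0} at the DLN evaluation points and subtract \eqref{eq:weak-DLN-CSM-Alg}, producing an error equation for $\phi_n^h$ whose right-hand side collects the DLN time-consistency remainders (for $w_t$ and for the $\beta$-averaged nonlinearities), the projection errors $\eta_n$, and the differences of the two nonlinearities. I would then set $v^h=\phi_{n,\beta}^h$. By \cref{lemma:G-stable} the discrete time-derivative term and the $\tfrac{C_s^4\delta^2}{\mu^2}\Delta$ term telescope into $G(\theta)$-norm differences plus the nonnegative numerical dissipation $\|\sum_\ell\lambda_\ell^{(n)}\phi_{n-1+\ell}^h\|^2$ and its gradient analogue; the viscous term yields the coercive $\nu\|\nabla\phi_{n,\beta}^h\|^2$; and by strong monotonicity (SM) in \cref{lemma:Monotone-LLC}, after adding and subtracting $\mathcal{W}_{n,\beta}$, the Smagorinsky difference contributes a nonnegative $(C_s\delta)^2C_1\|\nabla(w_{n,\beta}^h-w_{n,\beta})\|_{0,3}^3$ that I keep on the left.

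The remaining work is to bound the right-hand side so that, after summing over $n$ and applying the discrete Gr\"onwall inequality \cref{lemma:Gronwall}, the dissipative terms on the left absorb everything. The time-consistency terms are $\mathcal{O}(k_{\max}^2)$ by \cref{lemma:DLN-consistency}, and the projection terms give the $h^r$ (velocity, $H^1$) and $h^{s+1}$ (pressure) orders. For the convection term I would use skew-symmetry and the bounds of \cref{lemma:trilinear-ineq}, splitting $b^*(w_{n,\beta},w_{n,\beta},\cdot)-b^*(w_{n,\beta}^h,w_{n,\beta}^h,\cdot)$ through $w_{n,\beta}-w_{n,\beta}^h=\phi_{n,\beta}^h-\eta_{n,\beta}$ into pieces linear in $\phi_{n,\beta}^h$ and in $\eta_{n,\beta}$; the dangerous piece produces, after Young's inequality, a factor proportional to $\nu^{-3}\widehat{k}_n\|\nabla w_{n,\beta}\|^4$, and requiring this to stay below one is exactly the time-step condition \eqref{eq:time-condi}, which simultaneously guarantees the hypothesis $\Delta t\,d_n<1$ of \cref{lemma:Gronwall}.

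For the eddy-viscosity cross terms I would apply local Lipschitz continuity (LLC) to reduce them to $(C_s\delta)^2\,M\,\|\nabla(w_{n,\beta}^h-w_{n,\beta})\|_{0,3}\|\nabla\eta_{n,\beta}\|_{0,3}$, bound the $L^3$ projection error $\|\nabla\eta_{n,\beta}\|_{0,3}$ by splitting through a nodal interpolant and invoking the $L^p$--$L^2$ inverse inequality \cref{thm:Lp-L2} on the discrete part (yielding a factor $h^{r-d/6}$), and then Young-split with exponents $3$ and $3/2$ so that the cubic term is absorbed by the monotone bound; this contributes an accumulated $\mathcal{O}(\delta^2 h^{3r/2-d/4})$, while the $L^3$ temporal consistency of $|\nabla w|\nabla w$ gives an accumulated $\mathcal{O}(\delta^2 k_{\max}^3)$. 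Since the left side of \eqref{eq:error-conclusion} is itself a square root of the accumulated energy, taking the final square root turns these into the fractional-order terms $\delta h^{3r/4-d/8}$ and $\delta k_{\max}^{3/2}$; summing the per-step inequality as in \eqref{eq:stable-eq1}, applying Gr\"onwall, taking the maximum in $n$, and restoring the full error via $\|w_n-w_n^h\|\le\|\eta_n\|+\|\phi_n^h\|$ then yields \eqref{eq:error-conclusion}. I expect the main obstacle to be the joint treatment of the two nonlinearities: the convection term is what forces the $\mathcal{O}(\nu^{-3})$ step restriction, and the Smagorinsky term, being an intrinsically $L^3$ object with no Hilbert-space coercivity, requires the inverse inequality to be balanced against the monotone bound so as to lose as few powers of $h$ as possible, which is precisely what fixes the nonstandard exponent $3r/4-d/8$.
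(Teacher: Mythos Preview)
Your proposal is correct and follows essentially the same route as the paper: Stokes-projection error splitting, $G$-stability identity for the time-difference, strong monotonicity to gain $(C_s\delta)^2\|\nabla\phi_{n,\beta}^h\|_{0,3}^3$ on the left (note it is $\mathcal{W}_{n,\beta}$, not $w_{n,\beta}$, that appears there), LLC plus the $L^p$--$L^2$ inverse inequality for the eddy-viscosity cross term, the trilinear splitting that isolates the $\nu^{-3}\widehat{k}_n\|\nabla w_{n,\beta}\|^4$ Gr\"onwall coefficient, and the final summation with \cref{lemma:Gronwall}. The only point the paper makes explicit that you do not is that the piece $b^{\ast}(w_{n,\beta}^h,\eta_{n,\beta},\phi_{n,\beta}^h)$ carries a factor $\|\nabla w_{n,\beta}^h\|^2$, which is controlled a posteriori by the stability bound of \cref{thm:Stab-Uncond}.
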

\begin{remark}
	Since $\delta$ has the same dimension as $h$, the spatial convergence rate in \eqref{eq:error-conclusion} is $\min\{r,s+1\}$ as long as the highest polynomial degree for velocity $r \in \{1,2\}$. Thus the DLN algorithm in \eqref{eq:DLN-CSM-Alg} is second order accurate in both time and space if we choose Taylor-Hood $\mathbb{P}2-\mathbb{P}1$ finite element space and set the time step $\Delta t \approx h$. 
\end{remark}
\begin{proof}
We start with the CSM at time $t_{n,\beta}\ (1\leq n\leq N-1)$. For any $v^h\in V^h$, the variational formulation becomes
\begin{gather*}
    			( w_t(t_{n,\beta}), v^h) + \frac{C_s^4 \delta^2 }{\mu^2}(\nabla w_t(t_{n,\beta}), \nabla v^h) 
				+ b^{\ast} \big( w(t_{n,\beta}), w(t_{n,\beta}), v^h \big) - ( q(t_{n,\beta}), \nabla \cdot  v^h)
    +\nu(\nabla w(t_{n,\beta}), \nabla v^h) 
				\\
    			+ \Big((C_s\delta)^2 |\nabla w(t_{n,\beta})|\nabla w(t_{n,\beta}), \nabla v^h\Big)
				= \big( f(t_{n,\beta}), v^h \big), \quad \forall v^{h} \in V^{h}.
			\end{gather*}
Equivalently,
\begin{equation}\label{eq:weak-exact}
\begin{aligned}			
 			&\Big(\frac{\alpha_2 w_{n+1} + \alpha_1 w_{n} + \alpha_0 w_{n-1}}{\widehat{k}_{n}}, v^{h} \Big)
			+ \frac{C_s^4\delta^2}{\mu^2} \Big(\frac{\alpha_2 \nabla w_{n+1} + \alpha_1 \nabla w_{n} + \alpha_0 \nabla w_{n-1}}{\widehat{k}_n}, \nabla v^{h} \Big)  \\&
 			+ b^{\ast}(w_{n,\beta}, w_{n,\beta}, v^h) - \big( q(t_{n,\beta}), \nabla \cdot v^{h} \big)
			+ \nu (\nabla w_{n,\beta}, \nabla v^h) + \gamma (\nabla \cdot w_{n,\beta}, \nabla \cdot v^h) 
			 \\&
			+ \Big( (C_s\delta)^2 |\nabla w_{n,\beta}| \nabla w_{n,\beta}, \nabla v^{h} \Big) 
			= (f_{n,\beta}, v^{h}) + \tau_{n}(v^{h}),
		\end{aligned}
\end{equation}
where the truncation error is
\begin{align*}
 			\tau_{n}(v^{h})
			&= \Big(\frac{\alpha_2 w_{n+1}+\alpha_1 w_{n}+\alpha_0 w_{n-1}}{\widehat{k}_n}-w_t(t_{n,\beta}), v^h \Big) + \nu \big( \nabla \big( w_{n,\beta} - w(t_{n,\beta}) \big), \nabla v^h \big)   
			\\
			& +\frac{C_s^4\delta^2}{\mu^2} \Big(\frac{\alpha_2 \nabla w_{n+1} + \alpha_1 \nabla w_{n} + \alpha_0 \nabla w_{n-1}}{\widehat{k}_{n}} - \nabla w_t(t_{n,\beta}), \nabla v^{h} \Big) \\
			& + b^{\ast} ( w_{n,\beta}, w_{n,\beta}, v^h) - b^{\ast}( w(t_{n,\beta}), w(t_{n,\beta}), v^h) 
			+ \big( f(t_{n,\beta}) - f_{n,\beta}, v^{h} \big)
			\\
			& + \Big( (C_s\delta)^2 \big( |\nabla w_{n,\beta}|\nabla w_{n,\beta}
			-|\nabla w(t_{n,\beta})|\nabla w(t_{n,\beta}) \big), \nabla v^{h} \Big).
		\end{align*}
Let $W_{n}$ be velocity component of Stokes projection of $(w_{n}, 0)$ onto $V^{h} \times Q^{h}$. We set 
		\begin{gather}
			\label{eq:error-decompose}
			e_{n} = w_{n} - w_{n}^{h}, \qquad
			\eta_{n} = w_{n}-W_{n}, \qquad \phi_{n}^{h} = w_{n}^{h} - W_{n}, \\
			e_{n,\beta} = \sum_{\ell=0}^{2} \beta_{\ell}^{(n)} e_{n-1+\ell}, \qquad 
			\eta_{n,\beta} = \sum_{\ell=0}^{2} \beta_{\ell}^{(n)} \eta_{n-1+\ell}, \qquad 
			\phi_{n,\beta}^{h} = \sum_{\ell=0}^{2} \beta_{\ell}^{(n)} \phi_{n-1+\ell}^{h}.    \notag
		\end{gather}
		Then the velocity error $e_{n}$ can be decomposed as $e_{n} = \eta_{n} - \phi_{n}^{h}$.
		We subtract the DLN scheme in \eqref{eq:weak-DLN-CSM-Alg} from \eqref{eq:weak-exact} to get the following,
			\begin{gather*}
				\Big(\frac{\alpha_2 e_{n+1} + \alpha_1 e_{n} + \alpha_0 e_{n-1}}{\widehat{k}_{n}}, v^{h} \Big)
				+\frac{C_s^4\delta^2}{\mu^2} \Big(\frac{\alpha_2 \nabla e_{n+1} + \alpha_1 \nabla e_{n} 
			    +\alpha_0 \nabla e_{n-1}}{\widehat{k_n}}, \nabla v^{h} \Big) \\
				+ b^{\ast} ( w_{n,\beta}, w_{n,\beta}, v^{h}) - b^*( w_{n,\beta}^h, w_{n,\beta}^h, v^{h})
				+ \nu (\nabla e_{n,\beta}, \nabla v^h) + \gamma (\nabla \cdot e_{n,\beta}, \nabla \cdot v^h)
				\\
			    +\Big((C_s\delta)^2 (|\nabla w_{n,\beta}|\nabla w_{n,\beta} - |\nabla w_{n,\beta}^h| 
			    \nabla w_{n,\beta}^h), \nabla v^{h} \Big)
			    = \big( q(t_{n,\beta}),\nabla \cdot v^{h} \big) + \tau_{n}(v^{h}), \quad \forall v^h\in V^h.
		    \end{gather*}
	Notice that,
   \begin{align*}
				&b^{\ast}( w_{n,\beta},  w_{n,\beta}, v^h) - b^{\ast}( w_{n,\beta}^h,  w_{n,\beta}^h, v^h) \\
				=& b^{\ast}( w_{n,\beta},  w_{n,\beta}, v^h) - b^{\ast}( w_{n,\beta}^h,  w_{n,\beta}, v^h)
				  +b^{\ast}( w_{n,\beta}^h,  w_{n,\beta}, v^h) - b^{\ast}( w_{n,\beta}^h,  w_{n,\beta}^h, v^h), \\
				=& b^{\ast}({e_{n,\beta}},  w_{n,\beta}, v^h) + b^{\ast}( w_{n,\beta}^h, {e_{n,\beta}}, v^h),
			\end{align*}
	and		\begin{align*}
				&\int_{\Omega}(|\nabla w_{n,\beta}| \nabla w_{n,\beta} - |\nabla w_{n,\beta}^h| 
				\nabla w_{n,\beta}^h) : \nabla v^h d x \\
				= & \int_{\Omega}(|\nabla w_{n,\beta}| \nabla w_{n,\beta} - |\nabla W_{n,\beta}| 
				\nabla W_{n,\beta} + |\nabla W_{n,\beta}| \nabla W_{n,\beta} - |\nabla w_{n,\beta}^h|
				\nabla w_{n,\beta}^h) : \nabla v^h  d x.
			 \end{align*}
Hence,
\begin{equation}\label{eq:error-eq1}
		\begin{aligned}		
			&\Big( \frac{\alpha_2 \phi_{n+1}^{h} + \alpha_1 \phi_{n}^{h} + \alpha_0 \phi_{n-1}^{h}}{\widehat{k}_{n}}, v^{h} \Big)
			+ \frac{C_s^4\delta^2}{\mu^2} \Big(\frac{\alpha_2 \nabla \phi_{n+1}^{h} + \alpha_1 \nabla \phi_{n}^{h}+ \alpha_0 \nabla \phi_{n-1}^{h}}{\widehat{k}_{n}}, \nabla  v^{h} \Big)  \\
			&- b^{\ast}({e_{n,\beta}},  w_{n,\beta}, v^{h}) - b^{\ast} (w_{n,\beta}^h, {e_{n,\beta}}, v^{h}) 
			+ \nu(\nabla \phi_{n,\beta}^{h}, \nabla v^{h}) 
			+ \gamma (\nabla \cdot \phi_{n,\beta}^{h}, \nabla \cdot v^{h})  \\
			&+ (C_s\delta)^2 \int_{\Omega}(|\nabla w_{n,\beta}^h| \nabla w_{n,\beta}^h 
			- |\nabla W_{n,\beta}| \nabla W_{n,\beta}):(\nabla v^{h}) d x  \\
			=& \Big( \frac{\alpha_2 \eta_{n+1}+\alpha_1 \eta_{n}+\alpha_0 \eta_{n-1}}{\widehat{k}_n}, v^{h} \Big)
			+ \frac{C_s^4\delta^2}{\mu^2} \Big(\frac{\alpha_2 \nabla \eta_{n+1} + \alpha_1 \nabla \eta_{n}+ \alpha_0 \nabla \eta_{n-1}}{\widehat{k}_{n}}, \nabla v^{h} \Big)  \\
			& + (C_s\delta)^2 \int_{\Omega}(|\nabla w_{n,\beta}| \nabla w_{n,\beta} 
			- |\nabla W_{n,\beta}| \nabla W_{n,\beta}) : \nabla v^{h} dx  \\
			& + \nu(\nabla \eta_{n,\beta}, \nabla v^{h})  
			+ \gamma (\nabla \cdot \eta_{n,\beta}, \nabla \cdot v^{h})
			- \big( q(t_{n,\beta}), \nabla \cdot v^{h} \big) - \tau_{n}(v^h).
		\end{aligned}
  \end{equation}
		We set $v^{h} = \phi_{n,\beta}^{h}$ in \eqref{eq:error-eq1} and use \eqref{eq:G-identity} 
		in Lemma \ref{lemma:G-stable},
		\begin{align}
			\label{eq:error-eq2}
			&\begin{Vmatrix} \phi_{n\!+\!1}^h \\  \phi_{n}^h \end{Vmatrix}_{G(\theta)}^2
			\!-\! \begin{Vmatrix} \phi_{n}^{h} \\  \phi_{n\!-\!1}^{h} \end{Vmatrix}_{G(\theta)}^2
			\!+\! \Big\| \sum_{\ell\!=\!0}^{2} \lambda_{\ell}^{(n)} \phi_{n\!-\!1\!+\!\ell}^{h} \Big\|^2 
			+ \nu \widehat{k}_{n} \| \nabla \phi_{n,\beta}^{h} \|^{2} 
			+ \gamma \widehat{k}_{n} \| \nabla \cdot \phi_{n,\beta}^{h} \|^{2} \notag \\
			&+ \frac{C_s^4\delta^2}{\mu^{2}} \Big( \begin{Vmatrix} \nabla \phi_{n\!+\!1}^h \\ \nabla  \phi_{n}^h \end{Vmatrix}_{G(\theta)}^2
			\!-\! \begin{Vmatrix} \nabla \phi_{n}^{h} \\ \nabla  \phi_{n\!-\!1}^{h} \end{Vmatrix}_{G(\theta)}^2
			\!+\! \Big\| \sum_{\ell\!=\!0}^{2} \lambda_{\ell}^{(n)} \nabla \phi_{n\!-\!1\!+\!\ell}^{h} \Big\|^2 \Big) \\
			&+\!  (C_s\delta)^2 \widehat{k}_{n} \int_{\Omega}(|\nabla w_{n,\beta}^h| \nabla w_{n,\beta}^h 
			\!-\! |\nabla W_{n,\beta}| \nabla W_{n,\beta}) \!:\!(\nabla \phi_{n,\beta}^{h}) d x \notag \\
			=& \Big( \sum_{\ell=0}^{2} \alpha_{\ell} \eta_{n-1+\ell}, \phi_{n,\beta}^{h} \Big) 
			+ \frac{C_s^4\delta^2}{\mu^2} \Big( \sum_{\ell=0}^{2} \alpha_{\ell} \nabla \eta_{n-1+\ell}, \nabla \phi_{n,\beta}^{h} \Big)
			+ \nu \widehat{k}_{n} (\nabla \eta_{n,\beta}, \nabla \phi_{n,\beta}^{h})  \notag \\
			&+ \gamma \widehat{k}_{n} (\nabla \cdot \eta_{n,\beta}, \nabla \cdot \phi_{n,\beta}^{h}) 
			+ \widehat{k}_{n} b^{\ast}({e_{n,\beta}},  w_{n,\beta}, \phi_{n,\beta}^{h}) 
			+ \widehat{k}_{n} b^{\ast} (w_{n,\beta}^h, {e_{n,\beta}}, \phi_{n,\beta}^{h}) \notag \\
			&+ \! (\!C_s\delta \!)^2 \! \widehat{k}_{n} \!\! \int_{\Omega}\!(\!|\nabla w_{n,\beta}| \nabla w_{n,\beta} 
			\! - \! |\nabla W_{n,\beta}| \nabla W_{n,\beta} \!) \!:\! \nabla \phi_{n,\beta}^{h} dx 
			\!-\! \widehat{k}_{n} \!\big(\! q(\!t_{n,\beta}\!), \!\nabla \!\cdot \! \phi_{n,\beta}^{h} \!\big)
			\!-\! \widehat{k}_{n} \!\tau_{n}(\! \phi_{n,\beta}^{h} \!). \notag 
		\end{align}
		By strong monotonicity property \eqref{eq:Strong-Monotone} in  Lemma \ref{lemma:Monotone-LLC}, 
		\begin{gather}
			\label{eq:error-term-LHS}
			(C_s\delta)^2 \! \widehat{k}_{n} \!\int_{\Omega}(|\nabla w_{n,\beta}^h|\nabla w_{n,\beta}^h \!-\! |\nabla W_{n,\beta}|\nabla W_{n,\beta})
			\!:\!(\nabla \phi_{n,\beta}^h)  d x
			\!\geq \!C_1(C_s\delta)^2 \widehat{k}_{n} \|\nabla \phi_{n,\beta}^h\|_{0,3}^3.
		\end{gather}
		By Cauchy Schwarz inequality, Poincar$\rm{\acute{e}}$ inequality and Young's inequality, we obtain
		\begin{align}
			\label{eq:error-term1}
			\Big( \sum_{\ell=0}^{2} \alpha_{\ell} \eta_{n-1+\ell}, \phi_{n,\beta}^{h} \Big)
			\leq \frac{1}{\nu \widehat{k}_{n}} \Big\| \sum_{\ell=0}^{2} \alpha_{\ell} \eta_{n-1+\ell} \Big\|^{2}
			+ \frac{\nu \widehat{k}_{n}}{32} \| \nabla \phi_{n,\beta}^{h} \|^{2}
		\end{align}
		We use the approximation of Stokes projection in \eqref{eq:Stoke-Approx} and H$\rm{\ddot{o}}$lder's inequality 
		\begin{align}
			\label{eq:error-term1-term1}
			\Big\| \sum_{\ell=0}^{2} \alpha_{\ell} \eta_{n-1+\ell} \Big\|^{2}
			\leq& C h^{2r+2} \Big\| \sum_{\ell=0}^{2} \alpha_{\ell} w_{n-1+\ell} \Big\|_{r+1}^{2} \\
			\leq& C(\theta)h^{2r+2} \big( \| w_{n+1} - w_{n} \|_{r+1}^{2} + \| w_{n+1} - w_{n-1} \|_{r+1}^{2}   \big) \notag \\
			\leq& C(\theta)h^{2r+2} (k_{n}+k_{n-1}) \int_{t_{n-1}}^{t_{n+1}} \| w_{t} \|_{r+1}^{2} dt. \notag 
		\end{align}
		By \eqref{eq:error-term1-term1}, \eqref{eq:error-term1} becomes
		\begin{gather}
			\label{eq:error-term1-final}
			\Big( \sum_{\ell=0}^{2} \alpha_{\ell} \eta_{n-1+\ell}, \phi_{n,\beta}^{h} \Big)
			\leq C(\theta)h^{2r+2} \int_{t_{n-1}}^{t_{n+1}} \| w_{t} \|_{r+1}^{2} dt 
			+ \frac{\nu \widehat{k}_{n}}{32} \| \nabla \phi_{n,\beta}^{h} \|^{2}. 
		\end{gather}
		Similarly, we have 
		\begin{gather}
			\label{eq:error-term2-final}
			\frac{C_s^4\delta^2}{\mu^2} \Big(\sum_{\ell\!=\!0}^{2} \alpha_\ell \nabla \eta_{n\!-\!1\!+\!\ell},
			\nabla \phi_{n,\beta}^h \Big)
			\!\leq \! C(\theta)h^{2r} \Big( \frac{C_s^4\delta^2}{\mu^2} \Big)^{2}
			\int_{t_{n\!-\!1}}^{t_{n\!+\!1}} \| w_{t} \|_{r\!+\!1}^{2} dt 
			\!+\! \frac{\nu \widehat{k}_{n}}{32} \| \nabla \phi_{n,\beta}^{h} \|^{2}.
		\end{gather}
		By the definition of Stokes projection in \eqref{eq:Stokes-def}, 
		$(\nabla \eta_{n,\beta}, \nabla \phi_{n,\beta}^{h}) = 0$. 
		By Cauchy Schwarz inequality, Poincar$\rm{\acute{e}}$ inequality and Young's inequality,
		\begin{gather}
			\label{eq:error-term4}
			\gamma \widehat{k}_{n} (\nabla \cdot \eta_{n,\beta}, \nabla \cdot \phi_{n,\beta}^{h})
			\leq \gamma d \widehat{k}_{n} \| \nabla \eta_{n,\beta} \| \| \nabla \phi_{n,\beta}^{h} \|
			\leq \frac{C \gamma^{2} \widehat{k}_{n}}{\nu} \| \nabla \eta_{n,\beta} \|^{2} 
			+ \frac{\nu \widehat{k}_{n}}{32} \| \nabla \phi_{n,\beta}^{h} \|^{2}.
		\end{gather}
		By the approximation of Stokes projection in \eqref{eq:Stoke-Approx}, triangle inequality and \eqref{eq:DLN-consistency} in Lemma \ref{lemma:DLN-consistency}
		\begin{align}
			\label{eq:error-term4-term1}
			\| \nabla \eta_{n,\beta} \|^{2} 
			\leq& C h^{2r} \big( \| w_{n,\beta} - w(t_{n,\beta}) \|_{r+1}^{2} + \| w(t_{n,\beta}) \|_{r+1}^{2} \big) \\
			\leq& C h^{2r} \Big( k_{\rm{max}}^{3} \int_{t_{n-1}}^{t_{n+1}} \| w_{tt} \|_{r+1}^{2} dt
			+ \| w(t_{n,\beta}) \|_{r+1}^{2} \Big). \notag 
		\end{align}
		Hence \eqref{eq:error-term4} becomes
		\begin{align}
			\label{eq:error-term4-final}
			&\gamma \widehat{k}_{n} (\nabla \cdot \eta_{n,\beta}, \nabla \cdot \phi_{n,\beta}^{h}) \\
			\leq& \frac{C \gamma^{2} h^{2r}}{\nu} \Big( k_{\rm{max}}^{4} \int_{t_{n-1}}^{t_{n+1}} \| w_{tt} \|_{r+1}^{2} dt + (k_{n} + k_{n-1}) \| w(t_{n,\beta}) \|_{r+1}^{2} \Big)
			+ \frac{\nu \widehat{k}_{n}}{32} \| \nabla \phi_{n,\beta}^{h} \|^{2}. \notag 
		\end{align}
		By \eqref{eq:b-bound} in Lemma \ref{lemma:trilinear-ineq}, Young's inequality and approximation of Stokes projection in \ref{eq:approx-thm}
		\begin{align}
		\label{eq:error-term5}
			&\widehat{k}_{n} b^{\ast}({e_{n,\beta}},  w_{n,\beta}, \phi_{n,\beta}^{h})   \\
			=& \widehat{k}_{n} b^{\ast}(\eta_{n,\beta}, w_{n,\beta}, \phi_{n,\beta}^h)
			- \widehat{k}_{n} b^{\ast}(\phi_{n,\beta}^h, w_{n,\beta}, \phi_{n,\beta}^h) \notag \\
			\leq& C \widehat{k}_{n} \| \nabla \eta_{n,\beta} \| \| \nabla w_{n,\beta} \| \| \nabla \phi_{n,\beta}^h \|
			+ C \widehat{k}_{n} \| \phi_{n,\beta}^h \|^{1/2} \| \nabla w_{n,\beta} \| \| \nabla \phi_{n,\beta}^h \|^{3/2} \notag \\
			\leq& \frac{C \widehat{k}_{n}}{\nu} \| \nabla \eta_{n,\beta} \|^{2} \| \nabla w_{n,\beta} \|^{2}
			+ \frac{C \widehat{k}_{n}}{\nu^{3}} \| \nabla w_{n,\beta} \|^{4} \| \phi_{n,\beta}^h \|^{2}
			+ \frac{\nu \widehat{k}_{n}}{32} \| \nabla \phi_{n,\beta}^h \|^{2} \notag \\ 
			\leq& \frac{C \widehat{k}_{n}h^{2r}}{\nu} \big( \| w_{n,\beta} \|_{r+1}^{4} + \| \nabla w_{n,\beta} \|^{4}  \big)
			+ \frac{C \widehat{k}_{n}}{\nu^{3}} \| \nabla w_{n,\beta} \|^{4} \| \phi_{n,\beta}^h \|^{2}
			+ \frac{\nu \widehat{k}_{n}}{32} \| \nabla \phi_{n,\beta}^h \|^{2} \notag
		\end{align}
		We use triangle inequality, \eqref{eq:DLN-consistency} in Lemma \ref{lemma:DLN-consistency} and H$\rm{\ddot{o}}$lder's inequality 
		\begin{align*}
			\| w_{n,\beta} \|_{r+1}^{4}
			\leq& C \big( \| w_{n,\beta} - w(t_{n,\beta})  \|_{r+1}^{4} + \| w(t_{n,\beta}) \|_{r+1}^{4} \big)  \\
			\leq& C \Big[ \big( C k_{\rm{max}}^{3} \int_{t_{n-1}}^{t_{n+1}} 1 \cdot \| w_{tt} \|_{r+1}^{2} dt \big)^{2} 
			+ \| w(t_{n,\beta}) \|_{r+1}^{4} \Big] \\
			\leq& C(\theta) \Big( k_{\rm{max}}^{7} \int_{t_{n-1}}^{t_{n+1}} \| w_{tt} \|_{r+1}^{4} dt  + \| w(t_{n,\beta}) \|_{r+1}^{4}  \Big), \\
			\| \nabla w_{n,\beta} \|^{4}
			\leq& C(\theta) \Big( k_{\rm{max}}^{7} \int_{t_{n-1}}^{t_{n+1}} \| \nabla w_{tt} \|^{4} dt  + \| \nabla w(t_{n,\beta}) \|^{4}  \Big)
		\end{align*}
		Thus \eqref{eq:error-term5} becomes
		\begin{confidential}
			\color{darkblue}
			\begin{align*}
				\| \nabla w_{n,\beta} \|^{4}
				\leq& C \big( \| \nabla w_{n,\beta} - \nabla w(t_{n,\beta})  \|^{4} + \| \nabla w(t_{n,\beta}) \|^{4} \big)  \\
				\leq& C \Big[ \big( C k_{\rm{max}}^{3} \int_{t_{n-1}}^{t_{n+1}} 1 \cdot \| \nabla w_{tt} \|^{2} dt \big)^{2} 
				                      + \| \nabla w(t_{n,\beta}) \|^{4} \Big] \\
				\leq& C \Big[ C k_{\rm{max}}^{6} \big( \int_{t_{n-1}}^{t_{n+1}} 1^2 dt \big) 
				             \big( \int_{t_{n-1}}^{t_{n+1}} \| \nabla w_{tt} \|^{4} dt \big) + \| \nabla w(t_{n,\beta}) \|^{4} \Big] \\
				\leq& C \Big( k_{\rm{max}}^{7} \int_{t_{n-1}}^{t_{n+1}} \| \nabla w_{tt} \|^{4} dt  + \| \nabla w(t_{n,\beta}) \|^{4}  \Big)
			\end{align*}
			\normalcolor
		\end{confidential}
\begin{equation}		
  \begin{aligned}
			&\widehat{k}_{n} b^{\ast}({e_{n,\beta}},  w_{n,\beta}, \phi_{n,\beta}^{h})  \\
			\leq& \frac{C(\theta) h^{2r}}{\nu} \Big( k_{\rm{max}}^{8} \int_{t_{n-1}}^{t_{n+1}} \| w_{tt} \|_{r+1}^{4} dt
			+ k_{\rm{max}}^{8} \int_{t_{n-1}}^{t_{n+1}} \| \nabla w_{tt} \|^{4} dt   \\
			&\qquad \qquad + (k_{n}+k_{n-1}) \| w(t_{n,\beta}) \|_{r+1}^{4} + (k_{n}+k_{n-1}) \| \nabla w(t_{n,\beta}) \|^{4} \Big)  \\
			&+ \frac{C \widehat{k}_{n}}{\nu^{3}} \| \nabla w_{n,\beta} \|^{4}
			\| \phi_{n,\beta}^h \|^{2} + \frac{\nu \widehat{k}_{n}}{32} \| \nabla \phi_{n,\beta}^h \|^{2}.
\end{aligned}
\end{equation}
		By \eqref{eq:Skew-Symmetry} and approximation of Stokes projection in \eqref{eq:Stoke-Approx}
		\begin{align}
		\label{eq:error-term6}
			\widehat{k}_{n} b^{\ast} (w_{n,\beta}^h, {e_{n,\beta}}, \phi_{n,\beta}^{h})
			\leq& \frac{C \widehat{k}_{n}}{\nu} \| \nabla w_{n,\beta}^{h} \|^{2} \| \nabla \eta_{n,\beta} \|^{2}
			+ \frac{\nu \widehat{k}_{n}}{32} \| \nabla \phi_{n,\beta}^h \|^{2} \notag \\
			\leq&  \frac{C h^{r}\widehat{k}_{n}}{\nu} \| |w|  \|_{\infty,r+1,2}^{2} \| \nabla w_{n,\beta}^{h} \|^{2}
			+ \frac{\nu \widehat{k}_{n}}{32} \| \nabla \phi_{n,\beta}^h \|^{2}.
		\end{align}
		By Local Lipschitz continuity \eqref{eq:LLC} in Lemma \ref{lemma:Monotone-LLC}	
  \begin{align}
		\label{eq:error-term7}
		&(C_s\delta)^2 \! \widehat{k}_{n}\! \int_{\Omega}(|\nabla w_{n,\beta}|\nabla w_{n,\beta} \!-\! |\nabla W_{n,\beta}|\nabla W_{n,\beta})
		\!:\! (\nabla \phi_{n,\beta}^h) d x \\
		\!\leq& \! (C_s\delta)^2\widehat{k}_{n} C_2 \mathcal{R}_{n} 
		\|\nabla{\eta_{n,\beta}}\|_{0,3} \|\nabla \phi_{n,\beta}^h\|_{0,3} \notag \\
		\!\leq& \frac{C (C_s\delta)^2 C_{2}^{3/2}\widehat{k}_{n}}{\sqrt{C_{1}}} \mathcal{R}_{n}^{3/2}  
		\| \nabla \eta_{n,\beta} \|_{0,3}^{3/2}
		+ \! \frac{C_{1}(C_s\delta)^2 \widehat{k}_{n}}{3} \|\nabla \phi_{n,\beta}^h\|_{0,3}^{3},  \notag 
		\end{align}
		where $\mathcal{R}_{n}=\max\{ \|\nabla w_{n,\beta}\|_{0,3}, \|\nabla W_{n,\beta}\|_{0,3} \}$.
		By triangle inequality, 
		\begin{align*}
		\mathcal{R}_{n}  \leq \max\big\{ \|\nabla w_{n,\beta}\|_{0,3}, \big\| \nabla \big(W_{n,\beta} - w_{n,\beta} \big) \big\|_{0,3} 
		+ \|\nabla w_{n,\beta}\|_{0,3}  \big\} 
		= \| \nabla \eta_{n,\beta} \|_{0,3} + \|\nabla w_{n,\beta}\|_{0,3}. 
		\end{align*}
		We use \eqref{eq:Lp-L2} in Theorem \ref{thm:Lp-L2}, triangle inequality and approximation theorem of interpolation in \eqref{eq:approx-thm}
		\begin{gather*}
			\| \nabla \eta_{n,\beta} \|_{0,3}
			\leq C h^{-d/6} \| \nabla \eta_{n,\beta} \| \leq C h^{r-d/6} \| w_{n,\beta} \|_{r+1}
		\end{gather*}
		By the fact: for any $a,b,c \in \mathbb{R}$ with $c>1$, 
		\begin{gather}
		\label{eq:fact1}
			(|a| + |b|)^{c} \leq 2^{c-1}(|a|^{c} + |b|^{c}),
		\end{gather}
		we have
		\begin{align}
		\label{eq:error-term7-term1}
			\mathcal{R}_{n}^{3/2} \| \nabla \eta_{n,\beta} \|_{0,3}^{3/2}
			\leq& C \big( \| \nabla \eta_{n,\beta} \|_{0,3}^{3} 
			+ \| \nabla \eta_{n,\beta} \|_{0,3}^{3/2} \| \nabla w_{n,\beta} \|_{0,3}^{3/2} \big) \\
			\leq& C h^{3r-d/2} \| w_{n,\beta} \|_{r+1}^{3} 
			+ C h^{3/2r-d/4} \| w_{n,\beta} \|_{r+1}^{3/2} \| \nabla w_{n,\beta} \|_{0,3}^{3/2} \notag \\
			\leq&  C h^{3r-d/2} \| w_{n,\beta} \|_{r+1}^{3} 
			+ C h^{3/2r-d/4} \big( \| w_{n,\beta} \|_{r+1}^{3} + \| \nabla w_{n,\beta} \|_{0,3}^{3} \big). \notag 
		\end{align}
		By triangle inequality, the fact in \eqref{eq:fact1}, \eqref{eq:DLN-consistency} in \Cref{lemma:DLN-consistency} and H$\rm{\ddot{o}}$lder's inequality,
		\begin{align}
		\label{eq:error-term7-term2}
			\| w_{n,\beta} \|_{r+1}^{3} 
			\leq& C \| w_{n,\beta} - w(t_{n,\beta}) \|_{r+1}^{3} + C \| w(t_{n,\beta}) \|_{r+1}^{3}  \\
			\leq& C \Big( k_{\rm{max}}^{3} \int_{t_{n-1}}^{t_{n+1}} \| w_{tt} \|_{r+1}^{2} dt \Big)^{3/2}
			+ C \| w(t_{n,\beta}) \|_{r+1}^{3} \notag \\
			\leq& C k_{\rm{max}}^{5} \int_{t_{n-1}}^{t_{n+1}} \| w_{tt} \|_{r+1}^{3} dt  
			+ C \| w(t_{n,\beta}) \|_{r+1}^{3} \notag.
		\end{align}
		By \eqref{eq:error-term7-term1} and \eqref{eq:error-term7-term2}, \eqref{eq:error-term7} becomes
		\begin{align}
		\label{eq:error-term7-final}
			&(C_s\delta)^2 \! \widehat{k}_{n}\! \int_{\Omega}(|\nabla w_{n,\beta}|\nabla w_{n,\beta} \!-\! |\nabla W_{n,\beta}|\nabla W_{n,\beta})
			\!:\! (\nabla \phi_{n,\beta}^h) d x \\
			\leq& \frac{C (C_s\delta)^2 C_{2}^{3/2}}{\sqrt{C_{1}}}
			\Big[ (1+h^{\frac{3r}{2}-\frac{d}{4}} ) h^{\frac{3r}{2}-\frac{d}{4}} \big(k_{\rm{max}}^{6}  
			\int_{t_{n-1}}^{t_{n+1}} \| w_{tt} \|_{r+1}^{3} dt + (k_{n} + k_{n-1}) \| w(t_{n,\beta}) \|_{r+1}^{3}  \big) \notag \\
			& \qquad \qquad \qquad +h^{\frac{3r}{2}-\frac{d}{4}} \big(k_{\rm{max}}^{6}  
			\int_{t_{n-1}}^{t_{n+1}} \| \nabla w_{tt} \|_{0,3}^{3} dt + (k_{n} + k_{n-1}) \| \nabla w(t_{n,\beta}) \|_{0,3}^{3}  \big) \Big].
			\notag 
		\end{align}
		We choose $p^{h}$ to be $L^2$-projection of $q(\!t_{n,\beta}\!)$ onto $Q^{h}$, then 
		\begin{align}
		\label{eq:error-term8}
			\widehat{k}_{n} \big( q(t_{n,\beta}), \nabla \!\cdot\!  \phi_{n,\beta}^{h} \big)
			\!=\!	\widehat{k}_{n} \big( q(t_{n,\beta}) \!-\! p^{h}, \nabla \!\cdot\!  \phi_{n,\beta}^{h} \big) 
			\!\leq\! \sqrt{d} \widehat{k}_{n} \| q(t_{n,\beta}) \!-\! p^{h} \| \| \nabla \phi_{n,\beta}^{h} \|.
		\end{align}
		By Young's inequality and approximation of pressure in \eqref{eq:approx-thm}, \eqref{eq:error-term8} becomes 
		\begin{align}
		\label{eq:error-term8-final}
			\widehat{k}_{n} \big( q(t_{n,\beta}), \nabla \!\cdot\!  \phi_{n,\beta}^{h} \big)
			\leq \frac{C h^{2s+2}}{\nu} (k_{n}+k_{n-1}) \| q(t_{n,\beta}) \|_{s+1}^{2} 
			+ \frac{\nu \widehat{k}_{n}}{32} \| \nabla \phi_{n,\beta}^{h} \|^{2}
		\end{align}
		Now we deal $\widehat{k}_{n}\tau_{n}(\phi_{n,\beta}^h)$: by Cauchy Schwarz inequality, 
		Poincar$\rm{\acute{e}}$ inequality and \eqref{eq:DLN-consistency} in Lemma \ref{lemma:DLN-consistency}, the first three terms become 
		\begin{align}
			\label{eq:tau-term1}
			& \widehat{k}_{n} \Big(\frac{\alpha_2 w_{n+1}+\alpha_1 w_{n}+\alpha_0 w_{n-1}}{\widehat{k}_{n}}
			-w_t(t_{n,\beta}), \phi_{n,\beta}^h \Big) \notag \\
			\leq& C \widehat{k}_{n} \Big\| \frac{\alpha_2 w_{n+1}+\alpha_1 w_{n}+\alpha_0 w_{n-1}}{\widehat{k}_{n}} - w_t(t_{n,\beta}) \Big\| \| \nabla \phi_{n,\beta}^h \|  \notag \\
			\leq& \frac{C\widehat{k}_{n}}{\nu} \Big\| \frac{\alpha_2 w_{n+1}+\alpha_1 w_{n}+\alpha_0 w_{n-1}}
			{\widehat{k}_{n}}-w_t(t_{n,\beta}) \Big\|^{2} 
			+ \frac{\nu \widehat{k}_{n}}{32} \| \nabla \phi_{n,\beta}^h \|^{2} \notag \\
			\leq& \frac{C(\theta) k_{\rm{max}}^{4}}{\nu} \int_{t_{n-1}}^{t_{n+1}} \| w_{ttt} \|^{2} dt
			+ \frac{\nu \widehat{k}_{n}}{32} \| \nabla \phi_{n,\beta}^h \|^{2}, 
		\end{align}
		\begin{align}
			\label{eq:tau-term2}
			\nu \widehat{k_n} \big( \nabla (w_{n,\beta} - w(t_{n,\beta})),\nabla  \phi_{n,\beta}^h \big)
			\leq& \frac{C \widehat{k}_{n}}{\nu} \big\| \nabla w_{n,\beta} - \nabla w(t_{n,\beta}) \big\|
			+ \frac{\nu \widehat{k}_{n}}{32} \| \nabla \phi_{n,\beta}^h \|^{2}  \notag \\
			\leq& \frac{C k_{\rm{max}}^{4}}{\nu} \int_{t_{n-1}}^{t_{n+1}} \| \nabla w_{tt} \|^{2} dt
			+ \frac{\nu \widehat{k}_{n}}{32} \| \nabla \phi_{n,\beta}^h \|^{2}, 
		\end{align}
	and	\begin{align}
			\label{eq:tau-term3}
			& \frac{C_s^4 \delta^2 \widehat{k}_{n}}{\mu^2} \Big(\frac{\alpha_2\nabla w_{n+1} 
			+ \alpha_1 \nabla w_{n} + \alpha_0 \nabla w_{n-1}}{\widehat{k}_{n}} 
			- \nabla w_t(t_{n,\beta}), \nabla  \phi_{n,\beta}^h \Big) \notag \\
			\leq& \frac{C \widehat{k_n}}{\nu} \Big( \frac{C_s^4\delta^2}{\mu^2} \Big)^{2}
			\Big\| \nabla \Big( \frac{\alpha_2 w_{n+1} + \alpha_1 w_{n} + \alpha_0 w_{n-1}}{\widehat{k}_{n}} 
			- w_t(t_{n,\beta}) \Big) \Big\|^{2}
			+ \frac{\nu \widehat{k}_{n}}{32} \| \nabla \phi_{n,\beta}^h \|^{2} \notag \\
			\leq& \frac{C(\theta)k_{\rm{max}}^{4}}{\nu} \Big( \frac{C_s^4\delta^2}{\mu^2} \Big)^{2} 
			\int_{t_{n-1}}^{t_{n+1}} \|\nabla w_{ttt} \|^{2} dt
			+ \frac{\nu \widehat{k}_{n}}{32} \| \nabla \phi_{n,\beta}^h \|^{2}.
		\end{align}
		By \eqref{eq:b-bound} in Lemma \ref{lemma:trilinear-ineq} and triangle inequality, two non-linear terms become 
		\begin{align*}
			& \widehat{k}_{n} b^{\ast} \big( w_{n,\beta}, w_{n,\beta}, \phi_{n,\beta}^h \big) 
			- \widehat{k}_{n} b^{\ast} \big( w(t_{n,\beta}), w(t_{n,\beta}), \phi_{n,\beta}^h \big) \\
			=& \widehat{k}_{n} b^{\ast} \big( w_{n,\beta} - w(t_{n,\beta}), w_{n,\beta},\phi_{n,\beta}^h \big)
			+\widehat{k}_{n} b^{\ast} \big( w(t_{n,\beta}), w_{n,\beta} - w(t_{n,\beta}), \phi_{n,\beta}^h \big) \\
			\leq& \frac{C\widehat{k}_{n}}{\nu} \big\| \nabla (w_{n,\beta} - w(t_{n,\beta}) ) \big\|^{2} 
			\big( \| \nabla w_{n,\beta} \|^{2} + \| \nabla w(t_{n,\beta}) \|^{2} \big) 
			+ \frac{\nu \widehat{k}_{n}}{32} \| \nabla \phi_{n,\beta}^h \|^{2} \\
			\leq& \frac{C\widehat{k}_{n}}{\nu} \big\| \nabla (w_{n,\beta} - w(t_{n,\beta}) ) \big\|^{2} 
			\big( \big\| \nabla (w_{n,\beta} - w(t_{n,\beta}) ) \big\|^{2} + 2\| \nabla w(t_{n,\beta}) \|^{2} \big) + \frac{\nu \widehat{k}_{n}}{32} \| \nabla \phi_{n,\beta}^h \|^{2}.
		\end{align*}
		By \eqref{eq:DLN-consistency} in Lemma \ref{lemma:DLN-consistency} and H$\rm{\ddot{o}}$lder's inequality,
    	\begin{align*}
    		\big\| \nabla (w_{n,\beta} - w(t_{n,\beta}) ) \big\|^{4}
    		\leq & C(\theta) k_{\rm_{max}}^{7} \int_{t_{n-1}}^{t_{n+1}} \| \nabla w_{tt} \|^{4} dt.
    	\end{align*}
    	\begin{align*}
    		\big\| \nabla (w_{n,\beta} - w(t_{n,\beta}) ) \big\|^{2} \| \nabla w(t_{n,\beta}) \|^{2}
    		\leq & C(\theta) k_{\rm_{max}}^{3} 
    		\int_{t_{n-1}}^{t_{n+1}} \| \nabla w(t_{n,\beta}) \|^{2} \| \nabla w_{tt} \|^{2} dt 
    		\notag \\
    	 	\leq & C(\theta) k_{\rm_{max}}^{3}
    		\int_{t_{n-1}}^{t_{n+1}} \big( \| \nabla w(t_{n,\beta}) \|^{4} + \| \nabla w_{tt} \|^{4} \big) dt 
    		\notag \\
    		\leq & C(\theta) k_{\rm_{max}}^{3} \int_{t_{n-1}}^{t_{n+1}} \| \nabla w_{tt} \|^{4} dt  
    	    + C(\theta) k_{\rm_{max}}^{4} \| \nabla w(t_{n,\beta}) \|^{4}.
    	\end{align*}
		\begin{align}
			\label{eq:tau-term45}
			& \widehat{k}_{n} b^{\ast} \big( w_{n,\beta}, w_{n,\beta}, \phi_{n,\beta}^h \big) 
			- \widehat{k}_{n} b^{\ast} \big( w(t_{n,\beta}), w(t_{n,\beta}), \phi_{n,\beta}^h \big) \\
			\leq& \frac{C(\theta) k_{\rm_{max}}^{4}}{\nu} \Big[ \big( 1 + k_{\rm_{max}}^{4} \big) 
			\int_{t_{n-1}}^{t_{n+1}} \| \nabla w_{tt} \|^{4} dt 
			+ (k_{n} + k_{n-1}) \| \nabla w(t_{n,\beta}) \|^{4} \Big]
			+ \frac{\nu \widehat{k}_{n}}{32} \| \nabla \phi_{n,\beta}^h \|^{2}. \notag 
		\end{align}
		\begin{align} 
			\label{eq:tau-term6}
			\widehat{k}_{n} \big( f(t_{n,\beta}) - f_{n,\beta}, \phi_{n,\beta}^h \big)
			\leq& \frac{C \widehat{k_n}}{\nu} \| f(t_{n,\beta}) - f_{n,\beta} \|_{-1}^{2}
			+\frac{\nu \widehat{k}_{n}}{32} \| \nabla \phi_{n,\beta}^h \|^{2} 
			\notag \\
			\leq& \frac{C \widehat{k}_{n}}{\nu} k_{\rm_{max}}^{3} \int_{t_{n-1}}^{t_{n+1}} \| f_{tt} \|_{-1}^{2} dt + \frac{\nu \widehat{k}_{n}}{32} \| \nabla \phi_{n,\beta}^h \|^{2}.
		\end{align}
		By \eqref{eq:LLC} in Lemma \ref{lemma:Monotone-LLC} and Young's inequality,
		\begin{align}
			\label{eq:tau-term7}
			& \widehat{k}_{n} \Big((C_s\delta)^2(|\nabla w_{n,\beta}|\nabla w_{n,\beta}-|\nabla w(t_{n,\beta})|\nabla w(t_{n,\beta})), 
			\nabla \phi_{n,\beta}^h \Big) \\
			\leq& \widehat{k}_{n} (C_s\delta)^2C_2 \mathcal{S}_{n} \big\| \nabla (w_{n,\beta} - w(t_{n,\beta}) )\big\|_{0,3} 
			\|\nabla \phi_{n,\beta}^h\|_{0,3} \notag \notag \\
			\leq& \frac{C (C_s\delta)^2 C_{2}^{3/2}\widehat{k}_{n}}{\sqrt{C_{1}}} \mathcal{S}_{n}^{3/2} 
			\big\| \nabla (w_{n,\beta} - w(t_{n,\beta}) ) \big\|_{0,3}^{3/2}
			+ \frac{C_{1}(C_s\delta)^2 \widehat{k}_{n}}{4} \|\nabla \phi_{n,\beta}^h\|_{0,3}^{3}, \notag  
		\end{align}
		where $\mathcal{S}_{n} = \max \big\{ \|\nabla w_{n,\beta}\|_{0,3}, \|\nabla w(t_{n,\beta})\|_{0,3} \big\}$.
		\begin{confidential}
			\color{darkblue}
			\begin{align*}
				\Big( k_{\rm{max}}^{3} \int_{t_{n-1}}^{t_{n+1}} \| \nabla w_{tt} \|_{0,3}^{2} dt \Big)^{3/2}
				=& \Big( k_{\rm{max}}^{3} \int_{t_{n-1}}^{t_{n+1}} 1 \cdot \| \nabla w_{tt} \|_{0,3}^{2} dt \Big)^{3/2} \\
				\leq& \Big[ k_{\rm{max}}^{3} \big(\int_{t_{n-1}}^{t_{n+1}} 1^{3} dt \big)^{1/3}
				\big(\int_{t_{n-1}}^{t_{n+1}} (\| \nabla w_{tt} \|_{L^{3}}^{2})^{3/2} dt \big)^{2/3} \Big]^{3/2} \\
				=& \Big[ C k_{\rm{max}}^{10/3} 
				\big(\int_{t_{n-1}}^{t_{n+1}} \| \nabla w_{tt} \|_{0,3}^{3} dt \big)^{2/3} \Big]^{3/2} \\
				=& C k_{\rm{max}}^{5} \int_{t_{n-1}}^{t_{n+1}} \| \nabla w_{tt} \|_{0,3}^{3} dt
			\end{align*}
			\normalcolor
		\end{confidential}
		By \eqref{eq:DLN-consistency} in Lemma \ref{lemma:DLN-consistency} and Young's inequality,
		\begin{align}
			\label{eq:tau-term7-term1}
			&\mathcal{S}_{n}^{3/2} \big\| \nabla \big( w(t_{n,\beta}) - w_{n,\beta} \big) \big\|_{0,3}^{3/2} \\
			\leq& C \Big( \big\| \nabla \big( w(t_{n,\beta}) - w_{n,\beta} \big) \big\|_{0,3}^{3}
			+ \big\| \nabla \big( w(t_{n,\beta}) - w_{n,\beta} \big) \big\|_{0,3}^{3/2} 
			\| \nabla w_{n,\beta} \|_{0,3}^{3/2} \Big) \notag \\ 
			\leq& C(\theta) \Big( k_{\rm{max}}^{3} \int_{t_{n-1}}^{t_{n+1}} \| \nabla w_{tt} \|_{0,3}^{2} dt \Big)^{3/2} 
			+ C(\theta) \Big( k_{\rm{max}}^{3} \int_{t_{n-1}}^{t_{n+1}} \| \nabla w_{tt} \|_{0,3}^{2} dt \Big)^{3/4} \| \nabla w_{n,\beta} \|_{0,3}^{3/2} \notag \\
			\leq& C(\theta) k_{\rm{max}}^{9/2} \Big( \int_{t_{n-1}}^{t_{n+1}} \| \nabla w_{tt} \|_{0,3}^{2} dt \Big)^{3/2} + C(\theta) k_{\rm{max}}^{3/2} \Big( \int_{t_{n-1}}^{t_{n+1}} \| \nabla w_{tt} \|_{0,3}^{2} dt \Big)^{3/2} + C(\theta) k_{\rm{max}}^{3} \| \nabla w_{n,\beta} \|_{0,3}^{3} \notag \\
			\leq& C(\theta) k_{\rm{max}}^{9/2} \Big( \int_{t_{n-1}}^{t_{n+1}} \| \nabla w_{tt} \|_{0,3}^{2} dt \Big)^{3/2} + C(\theta) k_{\rm{max}}^{3/2} \Big( \int_{t_{n-1}}^{t_{n+1}} \| \nabla w_{tt} \|_{0,3}^{2} dt \Big)^{3/2} \notag \\
			& + C(\theta) k_{\rm{max}}^{3} \Big( \big\| \nabla \big( w(t_{n,\beta}) - w_{n,\beta} \big) \big\|_{0,3}^{3} + \| \nabla w(t_{n,\beta}) \|_{0,3}^{3} \Big). \notag \\
			\leq& C(\theta) \big( k_{\rm{max}}^{15/2} + k_{\rm{max}}^{9/2} + k_{\rm{max}}^{3/2} \big)
			\Big( \int_{t_{n-1}}^{t_{n+1}} \| \nabla w_{tt} \|_{0,3}^{2} dt \Big)^{3/2} 
			+ C(\theta) k_{\rm{max}}^{3} \| \nabla w(t_{n,\beta}) \|_{0,3}^{3}. \notag 
		\end{align}
		By H$\rm{\ddot{o}}$lder's inequality, 
		\begin{align}
			\label{eq:tau-term7-term2}
			\Big( \int_{t_{n-1}}^{t_{n+1}} \| \nabla w_{tt} \|_{0,3}^{2} dt \Big)^{3/2}
			\leq  C k_{\rm{max}}^{1/2} \int_{t_{n-1}}^{t_{n+1}} \| \nabla w_{tt} \|_{0,3}^{3} dt.
		\end{align}
		By \eqref{eq:tau-term7-term1} and \eqref{eq:tau-term7-term2}, \eqref{eq:tau-term7} becomes 
		\begin{align}
			\label{eq:tau-term7-final}
			& \widehat{k}_{n} \Big((C_s\delta)^2(|\nabla w_{n,\beta}|\nabla w_{n,\beta}-|\nabla w(t_{n,\beta})|\nabla w(t_{n,\beta})), 
			\nabla \phi_{n,\beta}^h \Big) \\
			\leq& \frac{C(\theta) k_{\rm{max}}^{3} (C_s\delta)^2 C_{2}^{3/2}}{\sqrt{C_{1}}} \Big[ \big( k_{\rm{max}}^{6} + k_{\rm{max}}^{3} + 1 \big) \int_{t_{n-1}}^{t_{n+1}} \| \nabla w_{tt} \|_{0,3}^{3} dt
			+ (k_{n} + k_{n-1}) \| \nabla w(t_{n,\beta}) \|_{0,3}^{3} \Big]. \notag 
		\end{align}
		We combine \eqref{eq:error-term-LHS}, \eqref{eq:error-term1-final}, \eqref{eq:error-term2-final}, \eqref{eq:error-term4-final}, \eqref{eq:error-term5}, \eqref{eq:error-term6}, \eqref{eq:error-term7-final}, \eqref{eq:error-term8-final}-\eqref{eq:tau-term6}, \eqref{eq:tau-term7-final} in \eqref{eq:error-eq2} and then sum \eqref{eq:error-eq2} over $n$ from $1$ to $N-1$ to obtain
\begin{equation}\label{eq:error-eq-final}
\begin{aligned}
			&\begin{Vmatrix} \phi_{N}^h \\  \phi_{N-1}^h \end{Vmatrix}_{G(\theta)}^2
			+ \frac{C_s^4\delta^2}{\mu^{2}} \begin{Vmatrix} \nabla \phi_{N}^h \\ \nabla  \phi_{N-1}^h \end{Vmatrix}_{G(\theta)}^2
			+ \sum_{n=1}^{N-1} \Big( \Big\| \sum_{\ell\!=\!0}^{2} \lambda_{\ell}^{(n)} \phi_{n\!-\!1\!+\!\ell}^{h} \Big\|^2 
			+ \Big\| \sum_{\ell\!=\!0}^{2} \lambda_{\ell}^{(n)} \nabla \phi_{n\!-\!1\!+\!\ell}^{h} \Big\|^2 \Big)  \\
			&+ \sum_{n=1}^{N-1} \frac{\nu}{2} \widehat{k}_{n} \| \nabla \phi_{n,\beta}^{h} \|^{2} 
			+ \sum_{n=1}^{N-1} \gamma \widehat{k}_{n} \| \nabla \cdot \phi_{n,\beta}^{h} \|^{2}
			+ \sum_{n=1}^{N-1} C_1(C_s\delta)^2 \widehat{k}_{n} \|\nabla \phi_{n,\beta}^h\|_{0,3}^3 \\
			\leq& \sum_{n=1}^{N-1} \frac{C(\theta) \widehat{k}_{n} \| \nabla w_{n,\beta} \|^{4}}{\nu^{3}} 
			\big( \| \phi_{n+1}^h \|^{2} + \| \phi_{n}^h \|^{2} + \| \phi_{n-1}^h \|^{2} \big)
			+ \frac{C h^{r}}{\nu^2} \| |w|  \|_{\infty,r+1,2}^{2} \Big( \sum_{n=1}^{N-1} \nu \widehat{k}_{n} \| \nabla w_{n,\beta}^{h} \|^{2} \Big) \\
			&+ F(\theta,k_{\rm{max}},h,\delta) 
			+ \begin{Vmatrix} \phi_{1}^h \\  \phi_{0}^h \end{Vmatrix}_{G(\theta)}^2
			+ \frac{C_s^4\delta^2}{\mu^{2}} \begin{Vmatrix} \nabla \phi_{1}^h \\ \nabla  \phi_{0}^h \end{Vmatrix}_{G(\theta)}^2 , 
\end{aligned}
\end{equation}
		where 
		\begin{align*}
			F(\theta,k_{\rm{max}},h,\delta)
			&= C(\theta) h^{2r+2} \| w_{t} \|_{L^{2}(0,T;H^{r+1})}^{2} 
			+ C(\theta) \Big( \frac{C_s^4\delta^2}{\mu^2} \Big)^{2} h^{2r} \| w_{t} \|_{L^{2}(0,T;H^{r+1})}^{2} \\
			& + \frac{C \gamma^{2} h^{2r}}{\nu} \Big( k_{\rm{max}}^{4} \|w_{tt} \|_{L^{2}(0,T;H^{r+1})}^{2} 
			+ \| |w| \|_{2,r+1,2,\beta}^{2} \Big) \notag \\
			& + \frac{C(\theta) h^{2r}}{\nu} \Big( k_{\rm{max}}^{8} \| w_{tt} \|_{L^{4}(0,T;H^{r+1})}^{4} 
			+ k_{\rm{max}}^{8} \| \nabla w_{tt} \|_{L^{4}(0,T;L^{2})}^{4}  
			+ \| |w| \|_{4,r+1,2,\beta}^{4} + \| |\nabla w| \|_{4,0,2,\beta}^{4} \Big) \\
			&+ \frac{C (C_s\delta)^2 C_{2}^{3/2}}{\sqrt{C_{1}}}
			\Big[ (1+h^{\frac{3r}{2}-\frac{d}{4}} ) h^{\frac{3r}{2}-\frac{d}{4}} \big(k_{\rm{max}}^{6}  \| w_{tt} \|_{L^{3}(0,T;H^{r+1})}^{3}
			+ \| |w| \|_{3,r+1,2,\beta}^{3}  \big) \notag \\
			& \qquad \qquad \qquad +h^{\frac{3r}{2}-\frac{d}{4}} \big(k_{\rm{max}}^{6}  \| \nabla w_{tt} \|_{L^{3}(0,T;L^{3})}^{3}
			+ \| |\nabla w| \|_{3,0,3,\beta}^{3} \big) \Big]
			 \notag \\
			&+ \frac{C h^{2s+2}}{\nu} \| |q|  \|_{2,s+1,2,\beta}^{2}
			+ \frac{C(\theta) k_{\rm{max}}^{4}}{\nu} \| w_{ttt} \|_{L^{2}(0,T;L^{2})}^{2}
			+ \frac{C k_{\rm{max}}^{4}}{\nu} \| \nabla w_{tt} \|_{L^{2}(0,T;L^{2})}^{2} \\
			&+ \frac{C(\theta)k_{\rm{max}}^{4}}{\nu} \Big( \frac{C_s^4\delta^2}{\mu^2} \Big)^{2} \| \nabla w_{ttt} \|_{L^{2}(0,T;L^{2})}^{2}
			\notag \\
			&+ \frac{C(\theta) k_{\rm_{max}}^{4}}{\nu} \Big[ \big( 1 + k_{\rm_{max}}^{4} \big) \| \nabla w_{tt} \|_{L^{4}(0,T;L^{2})}^{4}
			+ \| |\nabla w| \|_{4,0,2,\beta}^{4} \Big] 
			+ \frac{C k_{\rm_{max}}^{4}}{\nu} \| f_{tt} \|_{L^{2}(0,T;X')}^{2} \notag \\
			&+ \frac{C(\theta) k_{\rm{max}}^{3} (C_s\delta)^2 C_{2}^{3/2}}{\sqrt{C_{1}}} \Big[ \big( k_{\rm{max}}^{6} + k_{\rm{max}}^{3} + 1 \big) \| \nabla w_{tt} \|_{L^{3}(0,T;L^{3})}^{3} + \| |\nabla w| \|_{3,0,3,\beta}^{3} \Big].
		\end{align*}
		By \eqref{eq:DLN-Stability} in Theorem \ref{thm:Stab-Uncond}, 
		\begin{gather*}
			\sum_{n=1}^{N-1} \nu \widehat{k}_{n} \| \nabla w_{n,\beta}^{h} \|^{2} < C(\theta).
		\end{gather*}
		We set 
		\begin{gather*}
			D_{n} = \frac{C(\theta) \widehat{k}_{n} \| \nabla w_{n,\beta} \|^{4}}{k_{\rm{max}} \nu^{3}}, 
			\qquad 1 \leq n \leq N-1,
		\end{gather*}
		and 
		\begin{gather}
			\label{eq:dn-def}
			d_{n}=
			\begin{cases} 
      			D_{1} & n = 0 \\
      			D_{1} + D_{2} & n = 1 \\
      			D_{n-1} + D_{n} + D_{n+1} & 2 \leq n \leq N-2  \\
				D_{N-2} + D_{N-1} & n = N-1 \\
				D_{N-1} & n = N
   			\end{cases}.
		\end{gather}
		By the time step restriction in \eqref{eq:time-condi}, we have $k_{\rm{max}} d_{n} < 1$ for all $n$.
		Then we use the definition of $G(\theta)$-norm in \eqref{eq:G-norm-def} and apply 
		Gr$\rm{\ddot{o}}$nwall's inequality in Lemma \ref{lemma:Gronwall} to \eqref{eq:error-eq-final} (with $d_{n}$ defined in \eqref{eq:dn-def} 
		and $\Delta t = k_{\rm{max}}$)
		\begin{align}
			&\| \phi_{N}^{h} \|^{2} 
			+ C(\theta) \sum_{n=1}^{N-1} \frac{\nu}{2} \widehat{k}_{n} \| \nabla \phi_{n,\beta}^{h} \|^{2}  \\
			\leq& \exp \Big( \sum_{n=1}^{N-1} \frac{k_{\rm{max}} d_{n}}{1 - k_{\rm{max}} d_{n}} \Big)
			\Big[ \frac{C(\theta) h^{r}}{\nu^2} \| |w|  \|_{\infty,r+1,2}^{2} 
			+ F(\theta,k_{\rm{max}},h,\delta) \notag \\
			&\qquad \qquad \qquad \qquad \qquad 
			+ C(\theta) \big( \| \phi_{1}^{h} \|^{2} + \| \phi_{0}^{h} \| \big)
			+ \frac{C(\theta) C_s^4\delta^2}{\mu^{2}} \big( \| \nabla \phi_{1}^{h} \|^{2} 
			+ \| \nabla \phi_{0}^{h} \| \big) \Big]. \notag 
		\end{align}
		By triangle inequality and approximation of Stokes projection in \eqref{eq:approx-thm}, we have 
		\eqref{eq:error-conclusion}.
	\end{proof}
	\begin{remark}
		The Semi-implicit DLN algorithm has been applied to the Navier Stokes equation \cite{Pei2023_arXiv} and outperforms the corresponding fully implicit algorithm in two aspects: removing the time step restriction like \eqref{eq:time-condi} as well as avoiding the non-linear solver at each time step. For error analysis of the semi-implicit DLN algorithm for CSM \eqref{v2},
		the SM \eqref{eq:Strong-Monotone} and LLC \eqref{eq:LLC} conclusions should be adjusted and are left as open problem. To do so, one can follow the work in \cite{ingram2013new,Ing13_IJNAM} where a new linear extrapolation of the convecting velocity for CNLE is proposed that ensures energetic stability without a time step restriction.
	\end{remark}

\section{Numerical Tests}\label{sec:nt}
In this section, we perform two numerical tests. In the first test, we show the numerical error and the rate of convergence of the DLN scheme. In the second test, we show whether DLN exhibits intermittent backscatter for both constant time step and variable time step. In both tests, we consider DLN algorithm with three particular values of the parameter $\theta=\ 2/3,\ 2/\sqrt{5},\ 1$. In order to minimize the error constant and maintain strong stability qualities, the value $\theta=\ 2/3$ was proposed in \cite{DLN83_SIAM_JNA}. In \cite{kulikov2005one,kulikov2005stable}, the value $\theta=\ 2/\sqrt{5}$ was suggested to guarantee the best stability at infinity, i.e. for this value the scheme has good performance in long time simulation. In the case when $\theta=1$, the DLN method reduces to the symplectic midpoint rule, having the smallest error constant \cite{LPT22_Tech} and conserving all quadratic Hamiltonians. For test 2, we also consider $\theta=\ 0.95,\ 0.98$ so that we can check how DLN scheme behaves near $\theta=1$.
\subsection{A test with exact solution} (We choose the test problem from DeCaria, Layton and McLaughlin \cite{Victor2017artificial}). \label{sec:firstexample}
The first experiment tests the accuracy of the DLN algorithm and convergence rate of \eqref{v2} with constant time-step. It confirms the second-order convergence of the DLN method.
The following test has an exact solution for the 2D Navier Stokes problem. Let the domain be $\Omega=(-1,1)\times(-1,1)$. The exact solution is as follows:
\begin{align*}
    & u(x,y,t)=\pi\sin t(\sin 2\pi y\sin ^2 \pi x,-\sin 2\pi x\sin^2\pi y).\\
    &p(x,y,t)=\sin t\cos \pi x\sin\pi y.
\end{align*}
This is inserted into the CSM and the body force $f(x,t)$ is calculated. 
Taylor-Hood elements (P2-P1) were used in this test for the spatial discretization. We simulate the test up to $T=10.$ and take $C_s=0.1, \ \mu=0.4,\ \delta$ is taken to be the shortest edge of all triangles.
We test the constant step DLN with $\theta =2/3$. We define the error for velocity and pressure to be
    \begin{gather*}
    	 e_{n}^{w} = w(t_{n}) - w_{n}^{h}, \ \ \ e_{n}^{p} = p(t_{n}) - p_{n}^{h}. 
    \end{gather*}
    Let $k$ be the constant time step. We define two discrete norms for the errors as follows
    \begin{gather*}
    	\| | e^{w} | \|_{\infty,0} := \max_{0 \leq n \leq N} \| e_{n}^{w} \|_{L^{2}(\Omega)}, \ \ \ 
    	\| | e^{w} | \|_{0,0}:= \Big( \sum_{0 \leq n \leq N} k \| e_{n}^{w} \|_{L^{2}(\Omega)}^{2} \Big)^{1/2}, \\
    	\| | e^{p} | \|_{\infty,0} := \max_{0 \leq n \leq N} \| e_{n}^{p} \|_{L^{2}(\Omega)}, \ \ \ 
    	\| | e^{p} | \|_{0,0}:= \Big( \sum_{0 \leq n \leq N} k \| e_{n}^{p} \|_{L^{2}(\Omega)}^{2} \Big)^{1/2}.
    \end{gather*}
    \begin{table}[H]
    	\centering
    	\caption{Errors by $\| \cdot \|_{\infty,0}$-norm and Convergence Rate for the constant DLN with $\theta = 2/3$}
    	\begin{tabular}{cccccccc}
    		\hline
    		\hline
    		Time step $k$ & Mesh size $h$ & $\| | e^{w} | \|_{\infty,0}$ & Rate 
    		& $\| | \nabla e^{w} | \|_{\infty,0}$ & Rate  
    		& $\| | e^{p} | \|_{\infty,0}$   & Rate
    		\\
    		\hline
    		\hline
    		0.08  & 0.08571  & 6.0302       & -        & 56.8481   & -       & 10.8576    & -
    		\\
    		0.04  & 0.04221  & 0.0498844    & 6.9175   & 1.35745   & 5.3881  & 0.079143   & 7.1000
    		\\
    		0.02  & 0.02095  & 0.0119835    & 2.0575   & 0.399758  & 1.7637  & 0.0192928  & 2.0364
    		\\
    		0.01  & 0.01048  & 0.00297779   & 2.0087   & 0.10394   & 1.9434  & 0.00490525 & 1.9757
    		\\
    		\hline
    	\end{tabular}
     \label{tab:norm1}
    \end{table}
	\begin{table}[H]
		\centering
		\caption{Errors by $\| \cdot \|_{0,0}$-norm and Convergence Rate for the constant DLN with $\theta = 2/3$}
		\begin{tabular}{cccccccc}
			\hline
			\hline
			Time step $k$ & Mesh size $h$ & $\| | e^{w} | \|_{0,0}$ & Rate 
			& $\| | \nabla e^{w} | \|_{0,0}$ & Rate  
			& $\| | e^{p} | \|_{0,0}$   & Rate
			\\
			\hline
			\hline
			0.08  & 0.08571  & 7.8961       & -        & 79.3971   & -       & 12.3373  & -
			\\
			0.04  & 0.04221  & 0.107395     & 6.2001   & 3.06024   & 4.6974  & 0.143315   & 6.4277
			\\
			0.02  & 0.02095  & 0.024972     & 2.1045   & 0.900864  & 1.7643  & 0.0345612  & 2.0520
			\\
			0.01  & 0.01048  & 0.00617647   & 2.0155   & 0.234349  & 1.9427  & 0.00877951 & 1.9769
			\\
			\hline
		\end{tabular}
  \label{tab:norm2}
	\end{table}
In this test, \cref{tab:norm1} and \cref{tab:norm2} show that for true solution, we get predicted rate of convergence. We also see the same behavior for  $\theta= \ 2/\sqrt{5},\ 1$ (See Appendix). This is also evident that DLN allows larger time step to get the desired accuracy.
\subsection{Test2. Flow between offset cylinder} (We choose the  test problem from Jiang and Layton \cite [2D Test Problem]{jiang2016ev}).\label{sec:secondexample} This flow problem is tested to show whether or not the transfer of energy from fluctuations back to means in the turbulent flow using the Corrected Smagorinksy Model \eqref{csm0} happens. 
The domain is a disk with a smaller off center obstacle inside. Let $r_1=1,r_2=0.1,c=(c_1,c_2)=(1/2,0),$ then the domain is given by
\beas
\Omega=\{(x,y):x^2+y^2< r_1^2 \;\text{and}\; (x-c_1)^2+(y-c_2)^2> r_2^2\}.
\eeas
The flow is driven by a counterclockwise rotational body force
\beas
f(x,y,t)=(-4y*(1-x^2-y^2),4x*(1-x^2-y^2))^T,
\eeas
with no-slip boundary conditions on both circles. We discretize in space using Taylor-Hood elements. There are 400 mesh points around the outer circle and 100 mesh points around the inner circle. The flow is driven by a counterclockwise force (f=0 on the outer circle). Thus, the flow rotates about the origin and interacts with the immersed circle. We start the initial condition by solving the Stokes problem. We compute up to the final time $T_{final}=10$. Take $C_s=0.1, \mu=0.4, \delta$ is taken to be the shortest edge of all triangles $\approx 0.0112927$, Re=10,000.
For the DLN algorithm in \eqref{v2}, we compute the following quantities:
\begin{align*}
&\text{Model dissipation,}\mathcal{E}_{N}^{\tt MD}\!=\!\int_\Omega \!\Big( \!\frac{C_s^4 \!\delta^2}{\mu^2}\!\frac{\alpha_2\grad w_{N}^h\!+\!\alpha_1\grad w_{N\!-\!1}^h\!+\!\alpha_0\grad w_{N\!-\!2}^h}{\widehat{k_{N-1}}} \!\cdot \!\grad  w^h_{N\!-\!1,\!\beta}
\!+\!(C_s \!\delta)^2 \!|\grad \widetilde{w_{N\!-\!1,\beta}^h}||\grad w_{N\!-\!1,\!\beta}^h|^2 \!\Big) d x.\\
&\text{Effect of new term from CSM, }\mathcal{E}_{N}^{\tt CSMD}=\int_\Omega\Bigg(\frac{C_s^4\delta^2}{\mu^2}\frac{\alpha_2\grad w_{N}^h+\alpha_1\grad w_{N-1}^h+\alpha_0\grad w_{N-2}^h}{\widehat{k_{N-1}}}\cdot\grad  w^h_{N-1,\beta}
\Bigg)\,d x. \\
&\text{Numerical dissipation, }\mathcal{E}_{N}^{\tt ND}=\bigg\|\frac{\sum_{l=0}^{2}\lambda_l^{N-1} w_{N-2+l}^h}{\sqrt{\widehat{k_{N-1}}}}\bigg\|^2+\frac{C_s^4\delta^2}{\mu^2}\bigg\|\frac{\sum_{l=0}^{2}\lambda_l^{N-1} \grad w_{N-2+l}^h}{\sqrt{\widehat{k_{N-1}}}}\bigg\|^2.\\
&\text{Viscous dissipation, }\mathcal{E}_{N}^{\tt VD}=\nu\|\grad  w^h_{N-1,\beta}\|^2.\\
&\text{Kinetic Energy,}\ KE=\frac{1}{2}\|w^h_{N}\|^2.
\end{align*}
For constant time step $k=0.001$, we first run the test for $\theta=1$ , which reduces the method to a midpoint rule. We saw the results  were very close to the results in \cite{siddiqua2022numerical} (See Appendix). This indicates the accuracy of our implementation. However, it is unclear that the oscillations in the model dissipation are the effect of the new modeling term or normal ringing \cite{burkardt2022stress} which is seen in the standard Midpoint rule.
Hence, we need to investigate further. This suggests that approximation would be improved with a bit of numerical dissipation in the method provided that numerical dissipation vanishes as $k$ goes to $0$ 
and it does not dominate the 
physical dissipation.
\begin{figure}[H]
\subfloat[$\mathcal{E}_{N}^{\tt MD}$ ]{\includegraphics[width=8.2cm]{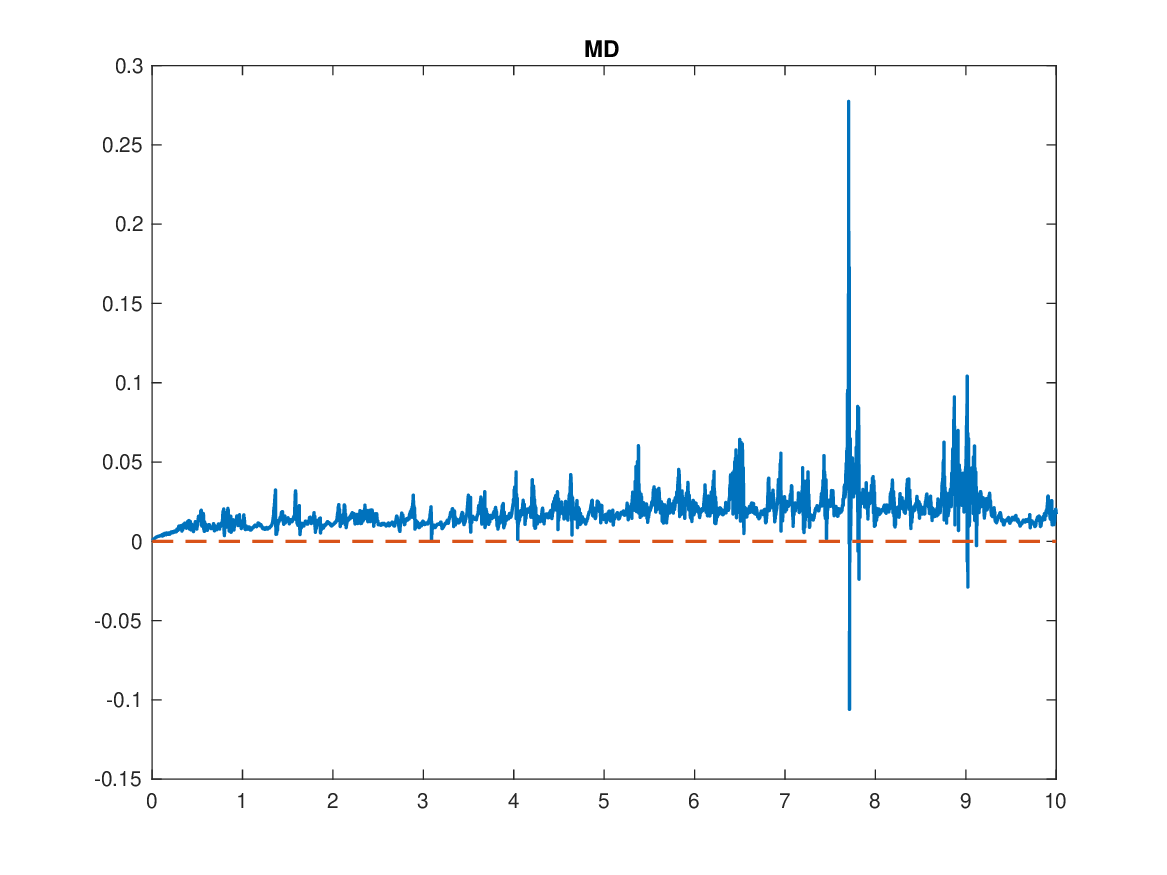}}
\subfloat[$\mathcal{E}_{N}^{\tt CSMD}$ ]{\includegraphics[width=8.2cm]{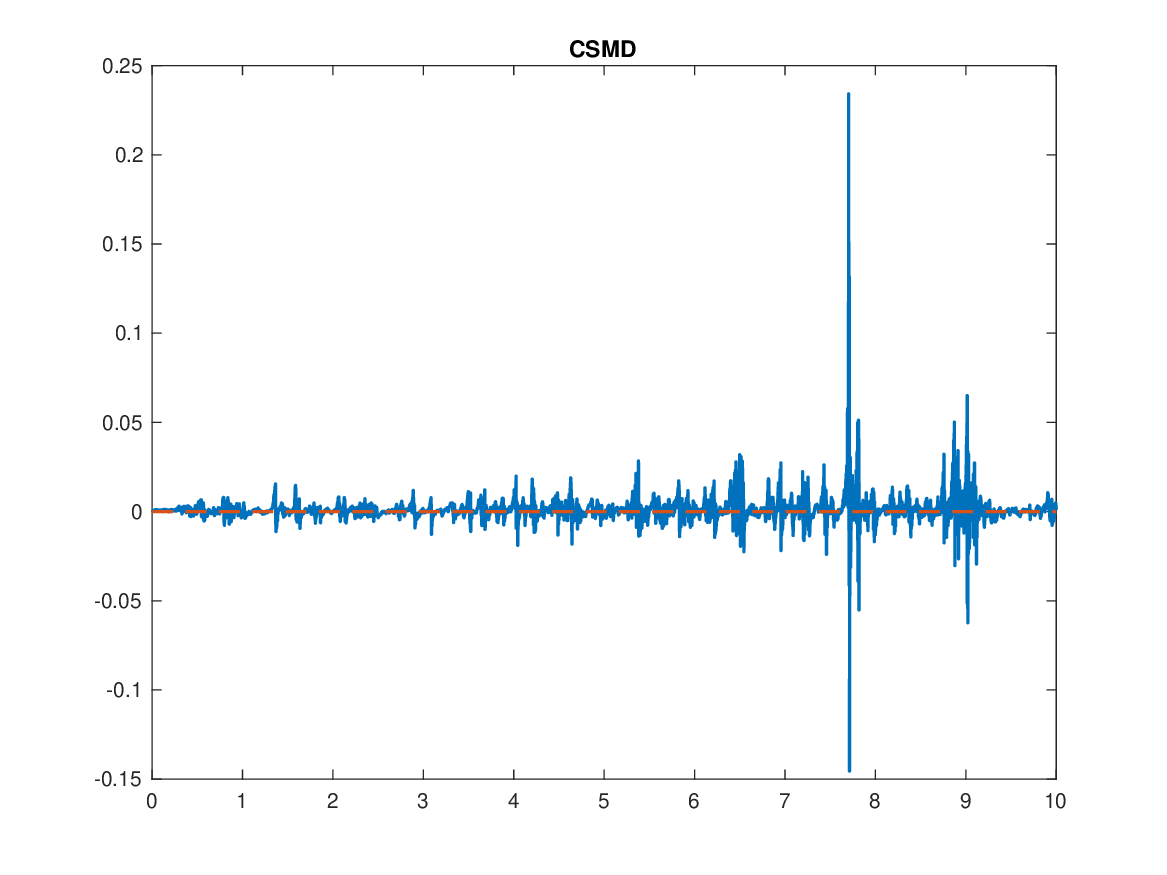}}\\
\subfloat[$\mathcal{E}_{N}^{\tt ND}$ ]{\includegraphics[width=8.2cm]{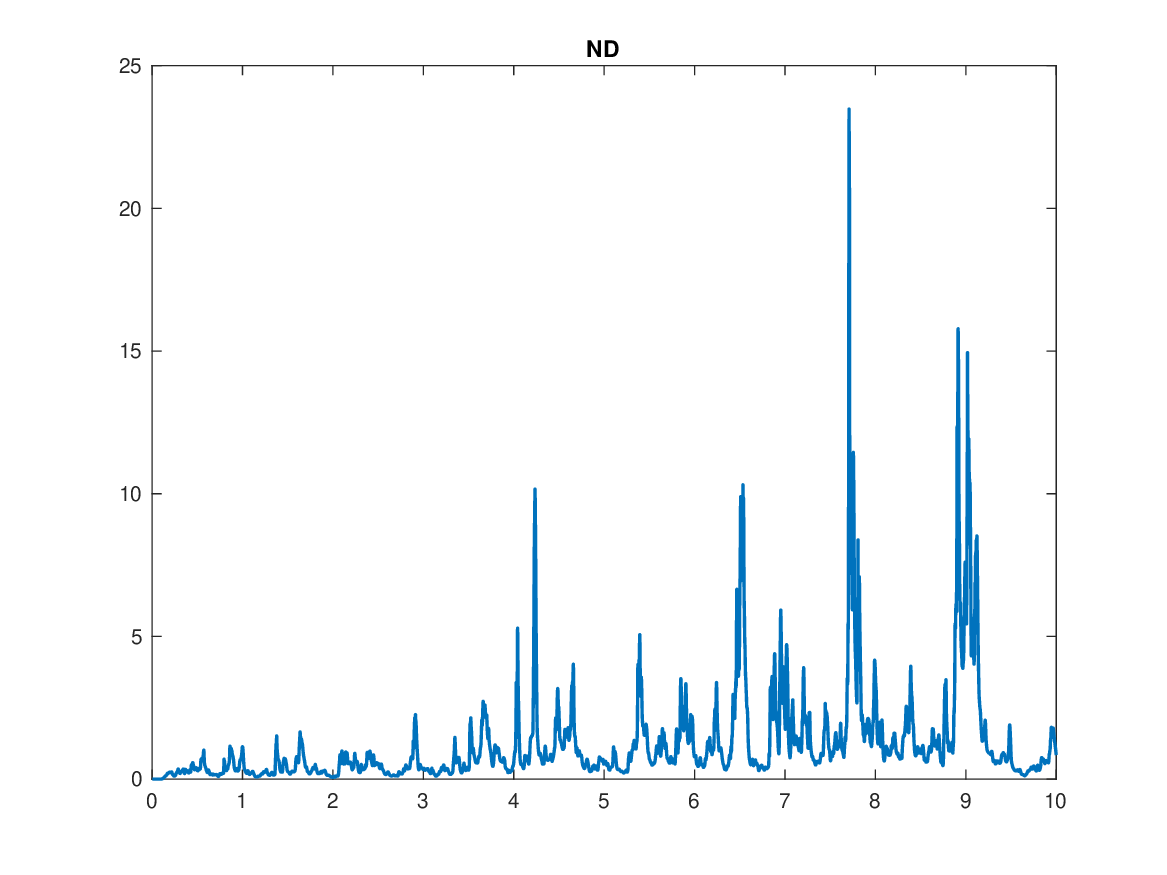}}
\subfloat[$\mathcal{E}_{N}^{\tt VD}$ ]{\includegraphics[width=8.2cm]{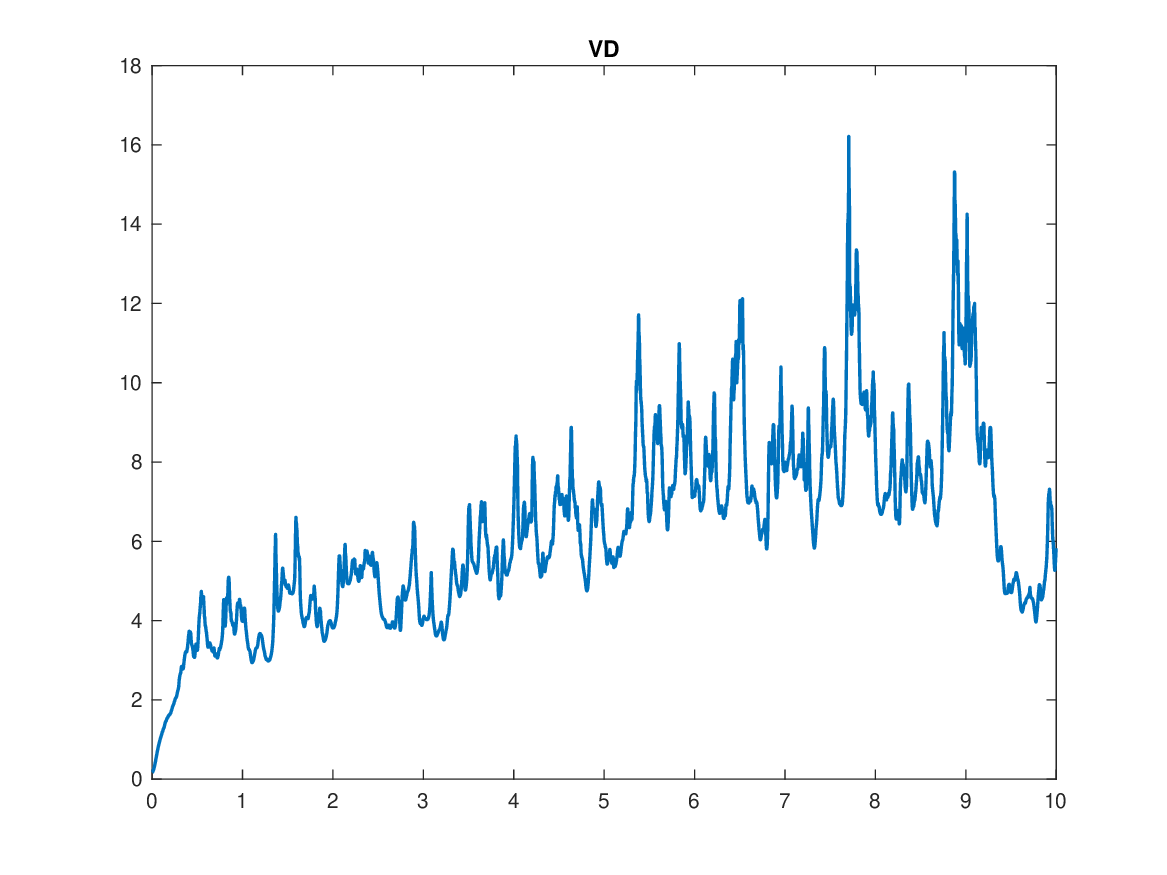}}
\caption{Constant time step DLN \eqref{v2} with $k =0.001,\ Re=10,000, \ \theta=0.98, \ C_s=0.1, \ \mu=0.4$. We do not see backscatter in $\mathcal{E}_{N}^{\tt MD}$ .}
\label{fig:plotdlnctheta0.98dt0.001}
\end{figure}
\begin{figure}[H]
\subfloat[$\mathcal{E}_{N}^{\tt MD}$ ]{\includegraphics[width=8.2cm]{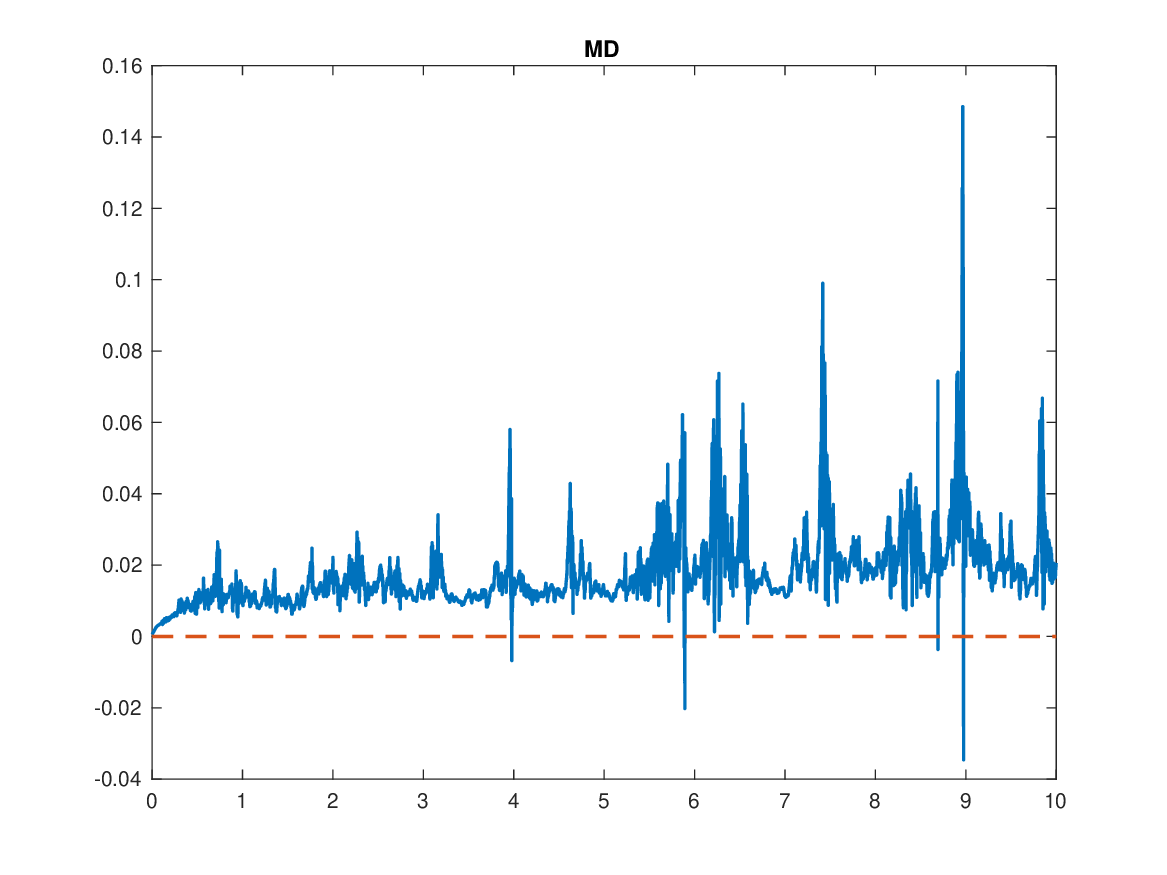}}
\subfloat[$\mathcal{E}_{N}^{\tt CSMD}$ ]{\includegraphics[width=8.2cm]{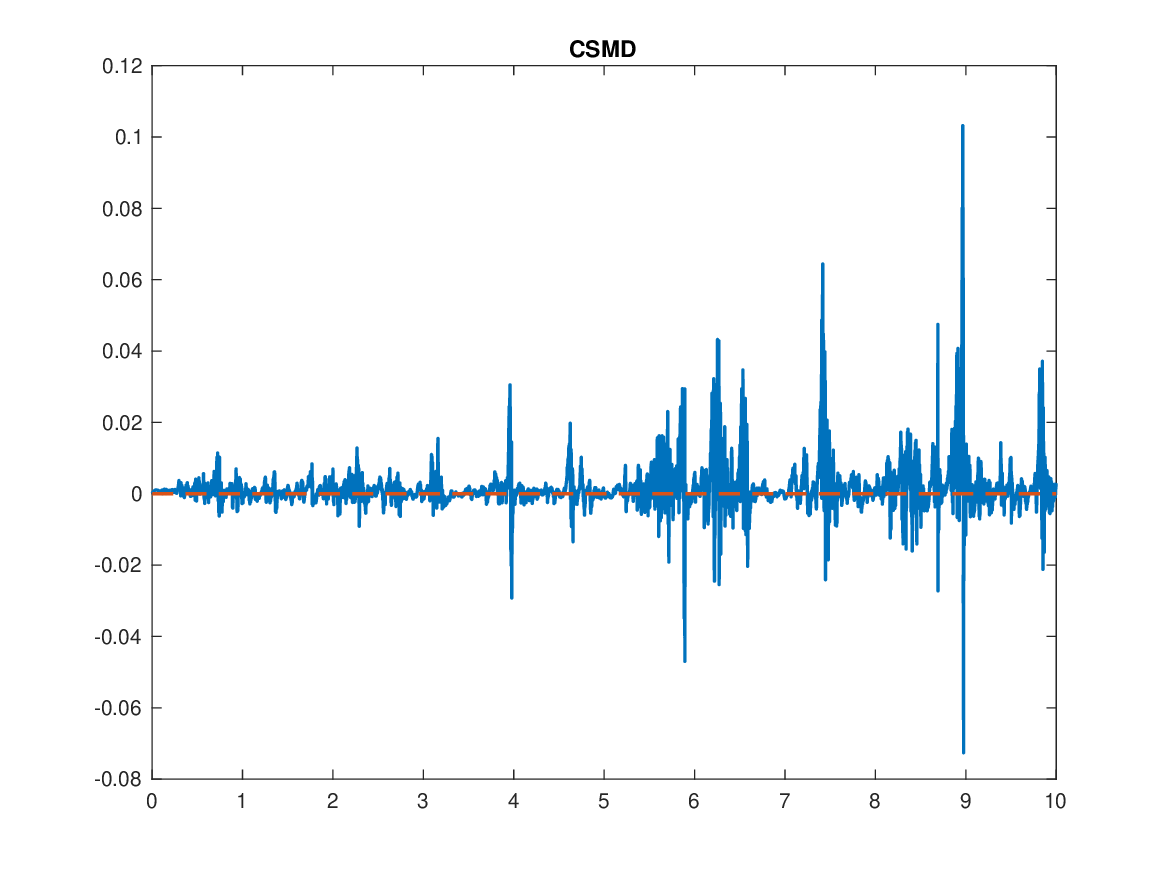}}\\
\subfloat[$\mathcal{E}_{N}^{\tt ND}$ ]{\includegraphics[width=8.2cm]{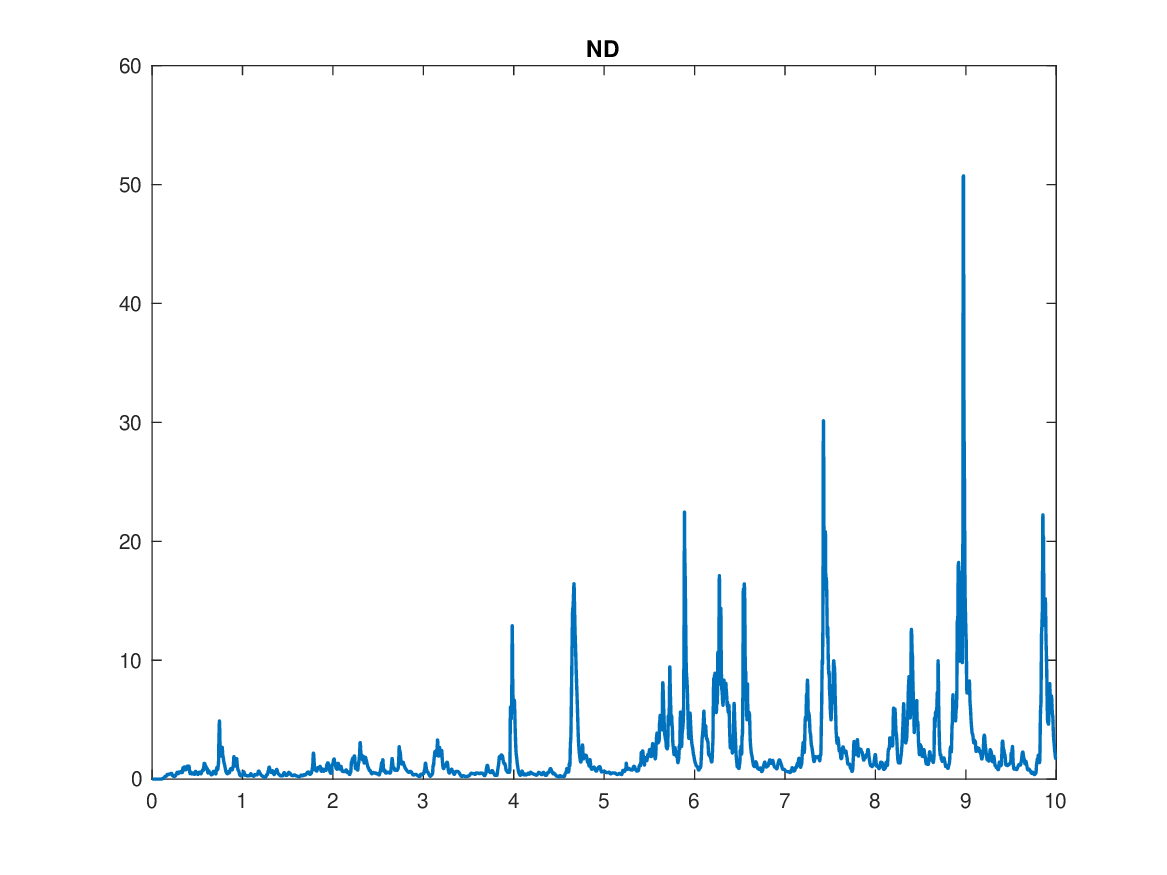}}
\subfloat[$\mathcal{E}_{N}^{\tt VD}$ ]{\includegraphics[width=8.2cm]{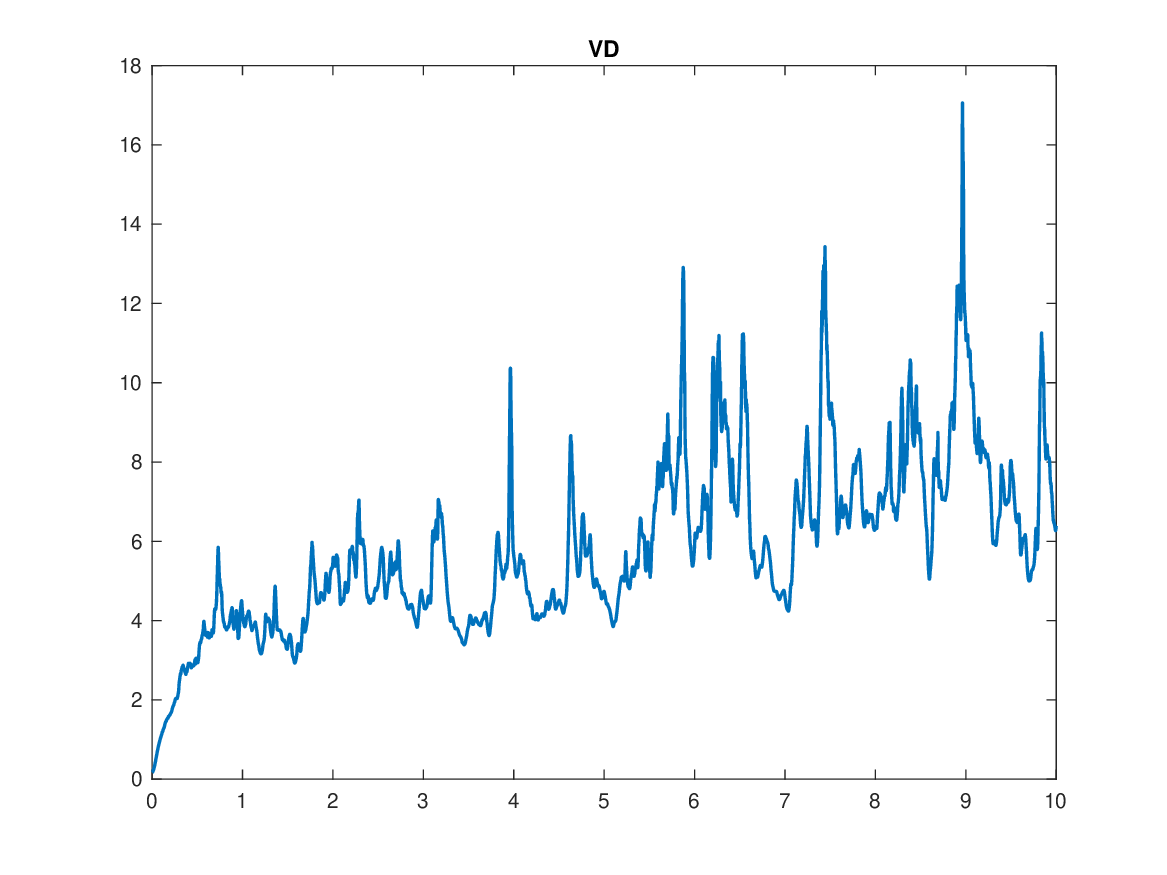}}
\caption{Constant time step DLN \eqref{v2} with $k =0.001,\ Re=10,000, \ \theta=0.95, \ C_s=0.1, \ \mu=0.4$. We do not see backscatter in $\mathcal{E}_{N}^{\tt MD}$ .}
\label{fig:plotdlnctheta0.95dt0.001}
\end{figure}
\begin{figure}[H]
\subfloat[$\mathcal{E}_{N}^{\tt MD}$ ]{\includegraphics[width=8.2cm]{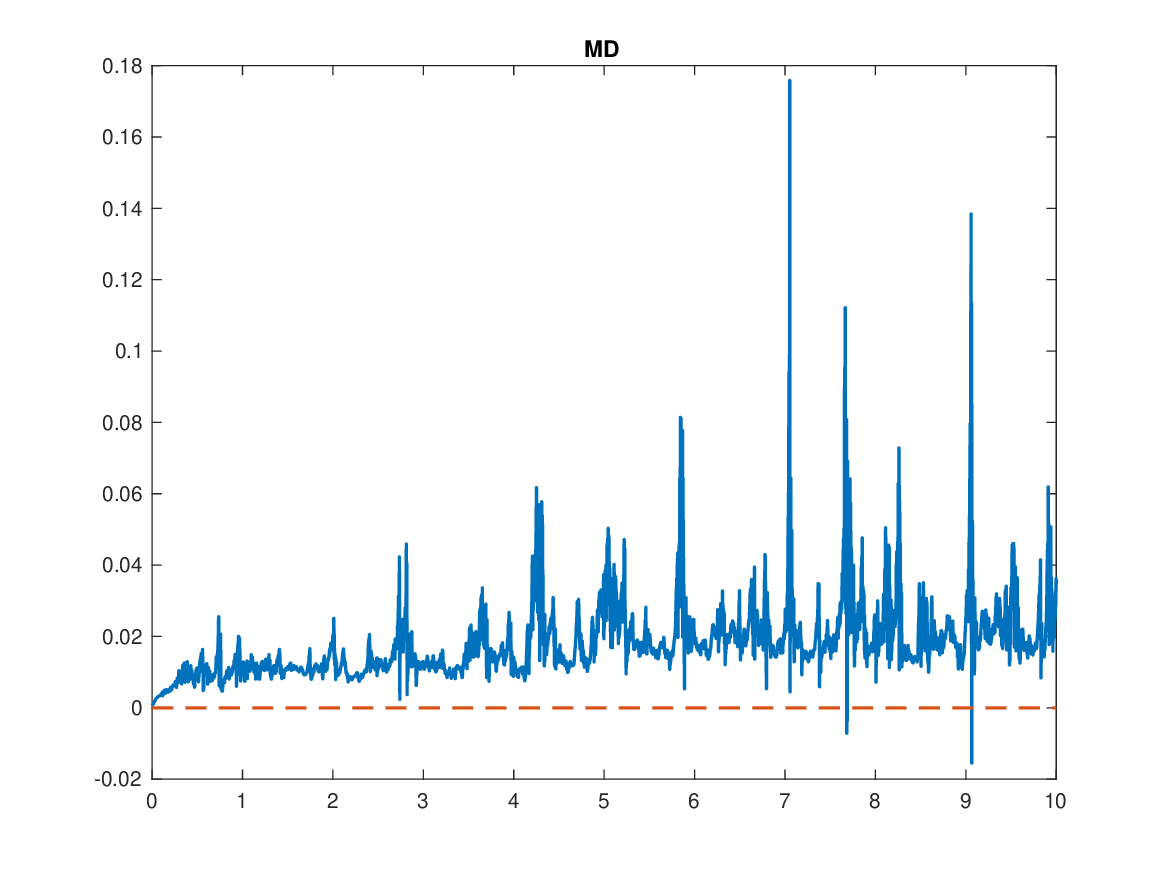}}
\subfloat[$\mathcal{E}_{N}^{\tt CSMD}$ ]{\includegraphics[width=8.2cm]{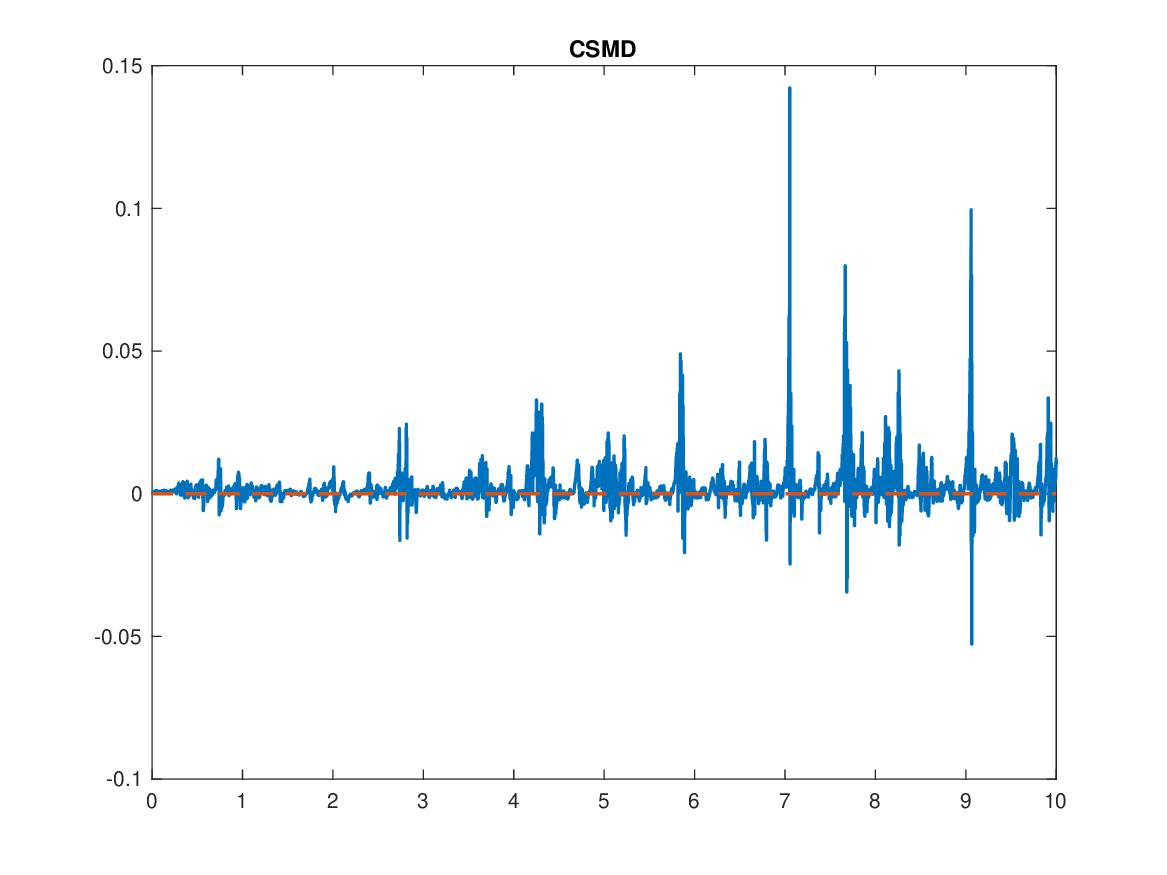}}\\
\subfloat[$\mathcal{E}_{N}^{\tt ND}$ ]{\includegraphics[width=8.2cm]{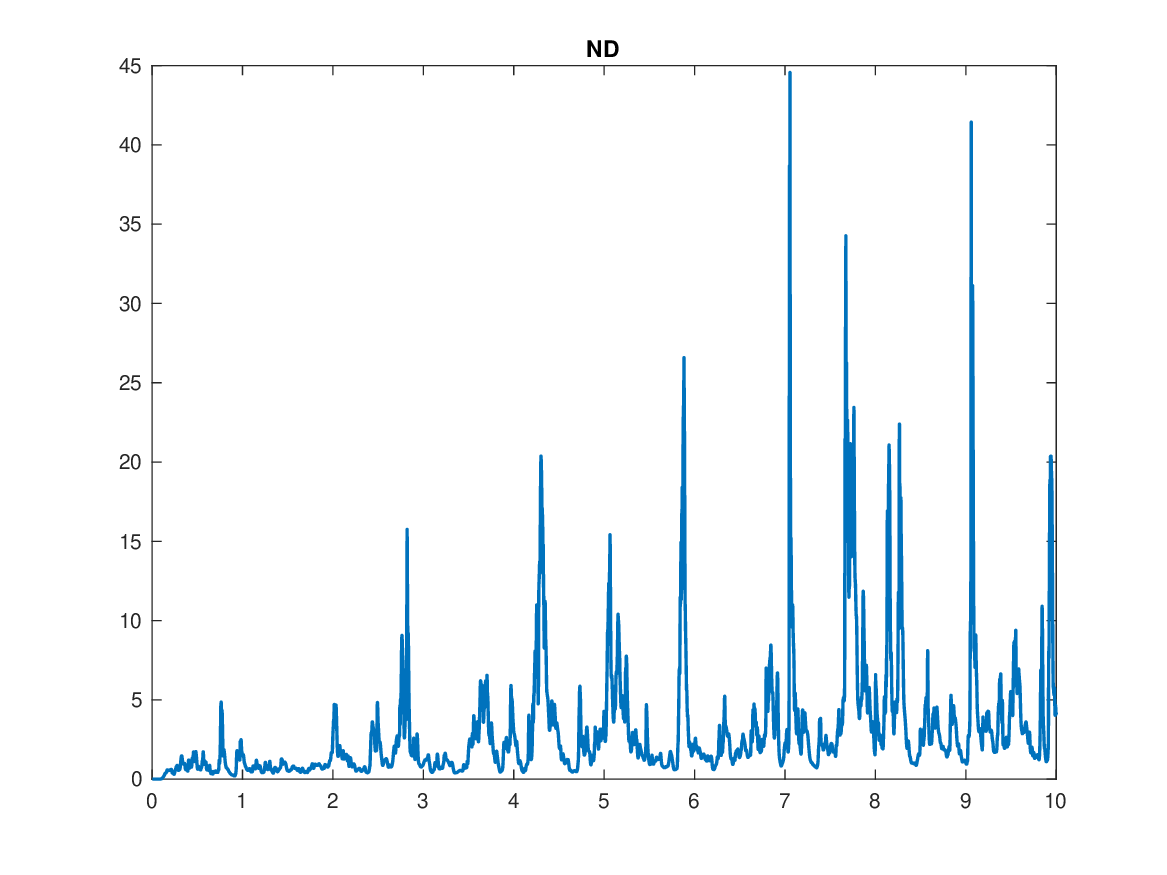}}
\subfloat[$\mathcal{E}_{N}^{\tt VD}$ ]{\includegraphics[width=8.2cm]{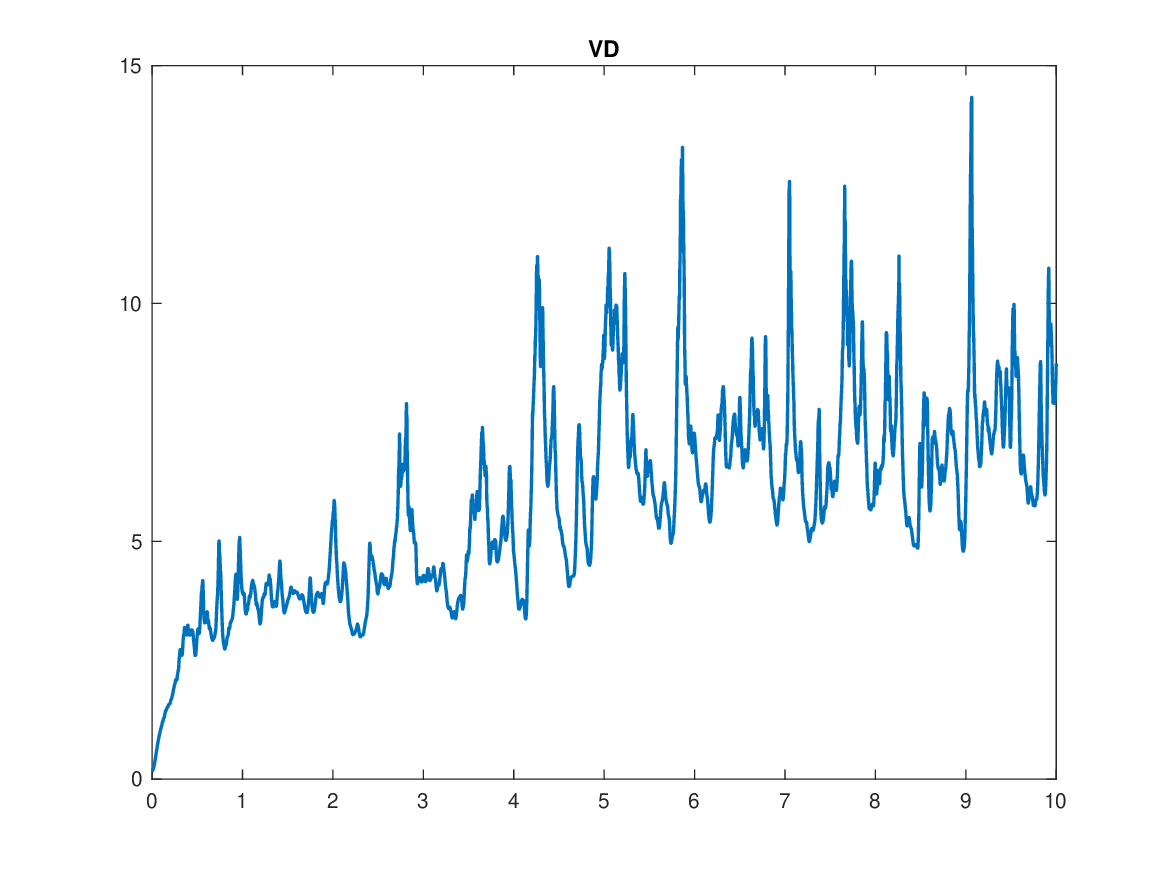}}
\caption{Constant time step DLN \eqref{v2} with $k =0.001,\ Re=10,000, \ \theta=2/\sqrt{5}, \ C_s=0.1, \ \mu=0.4$. We do not see backscatter in $\mathcal{E}_{N}^{\tt MD}$ .}
\label{fig:plotdlnctheta2bysqrt5dt0.001}
\end{figure}
\begin{figure}[H]
\subfloat[$\mathcal{E}_{N}^{\tt MD}$ ]{\includegraphics[width=8.2cm]{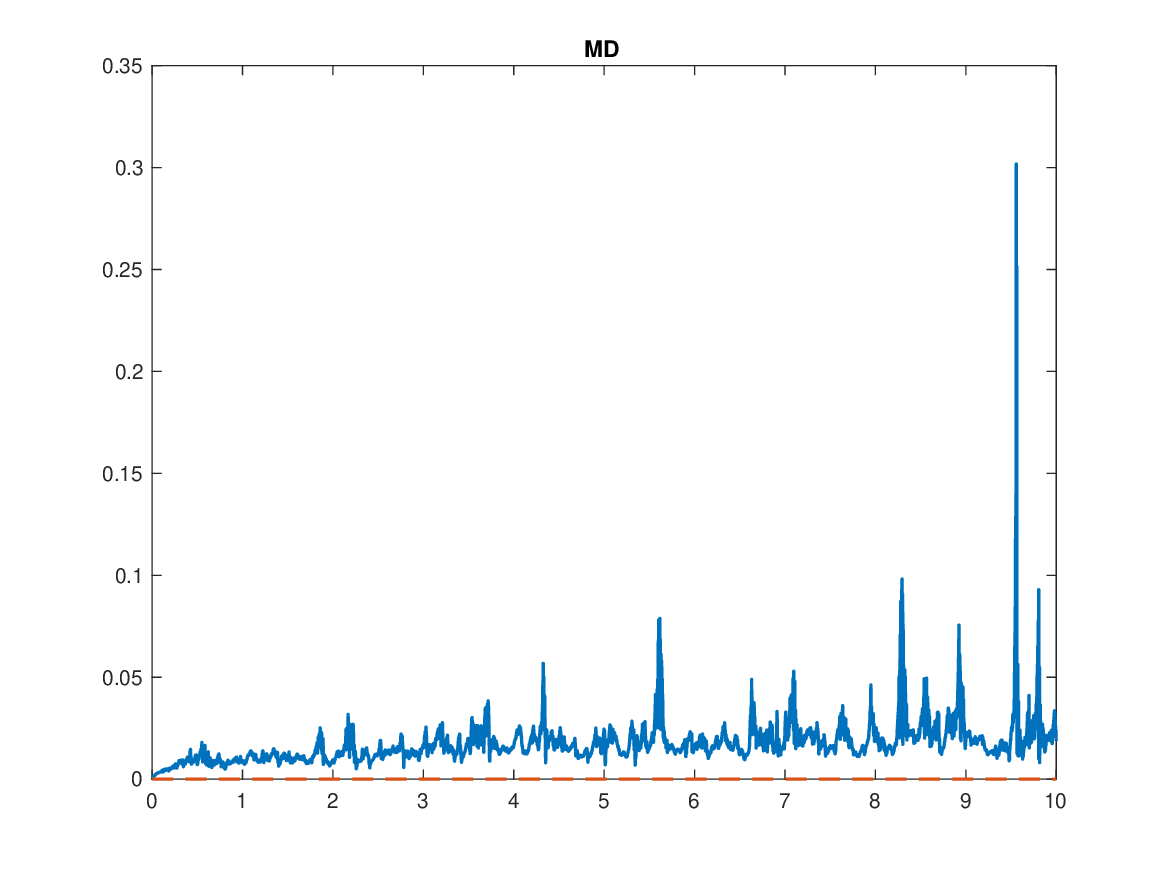}}
\subfloat[$\mathcal{E}_{N}^{\tt CSMD}$ ]{\includegraphics[width=8.2cm]{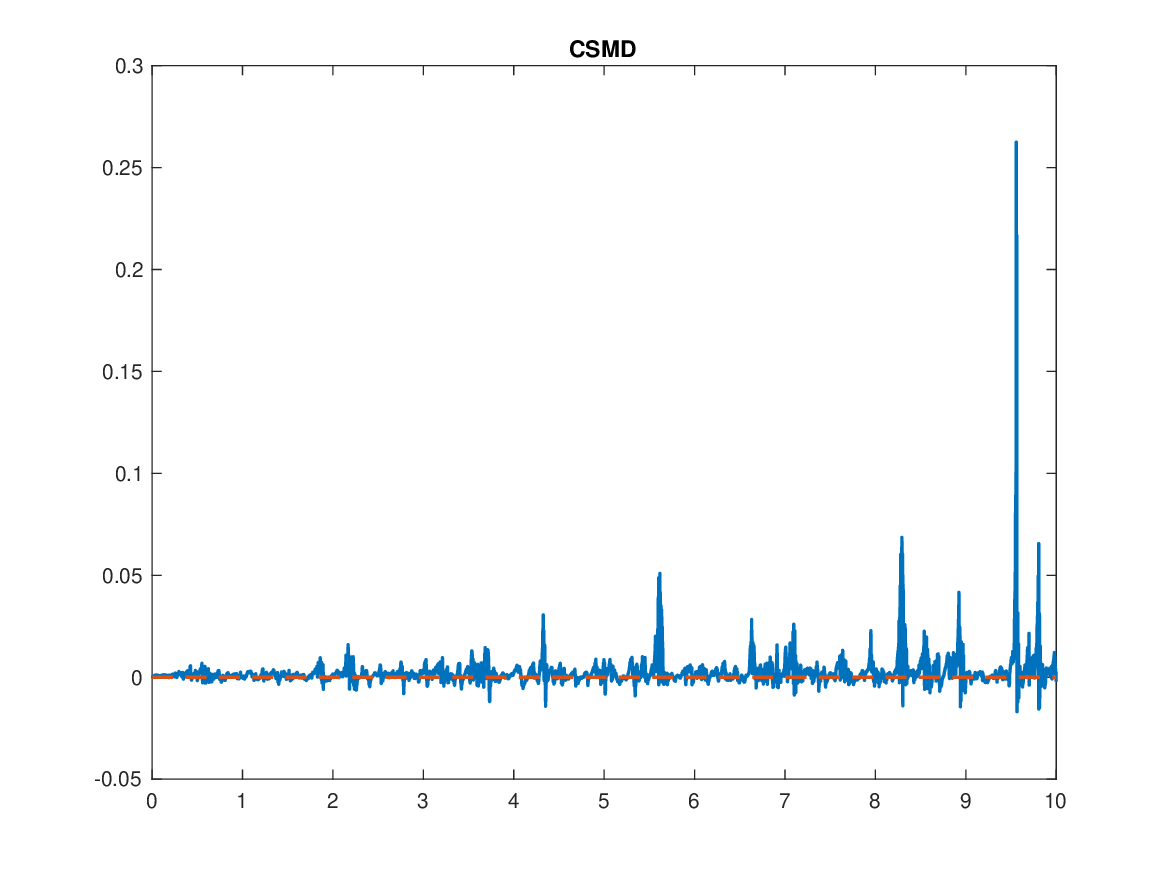}}\\
\subfloat[$\mathcal{E}_{N}^{\tt ND}$ ]{\includegraphics[width=8.2cm]{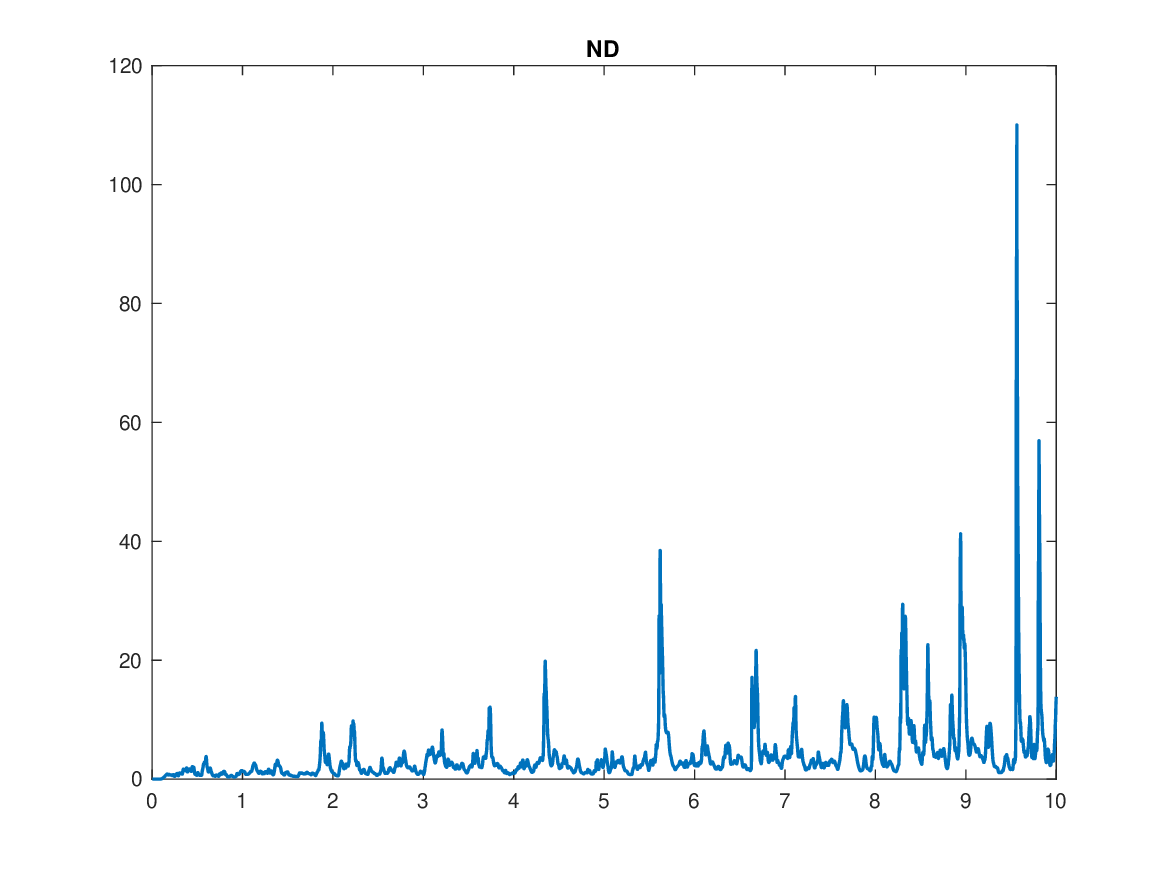}}
\subfloat[$\mathcal{E}_{N}^{\tt VD}$]{\includegraphics[width=8.2cm]{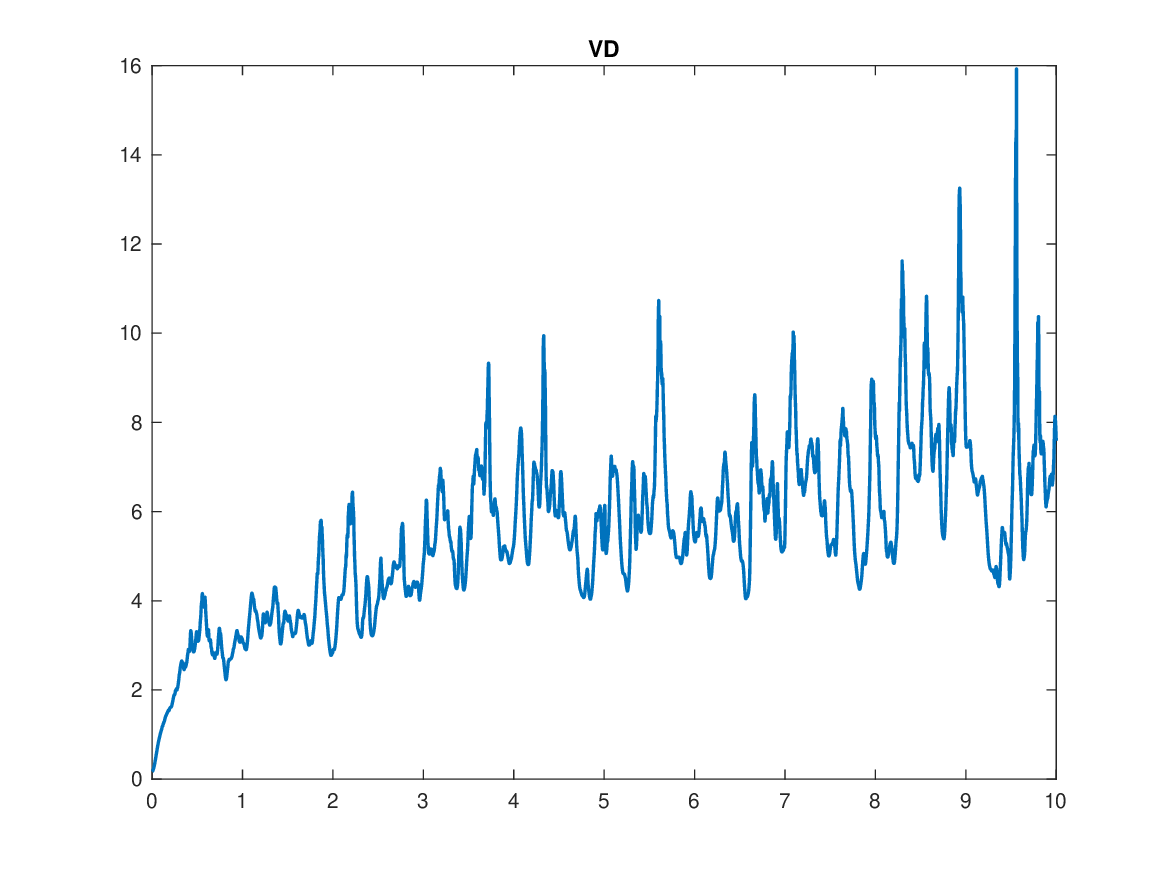}}
\caption{Constant time step DLN \eqref{v2} with $k =0.001,\ Re=10,000, \ \theta=2/3, \ C_s=0.1, \ \mu=0.4$. We do not see backscatter in $\mathcal{E}_{N}^{\tt MD}$.}
\label{fig:plotdlnctheta2by3dt0.001}
\end{figure}
\cref{fig:plotdlnctheta0.98dt0.001}, \cref{fig:plotdlnctheta0.95dt0.001}, and \cref{fig:plotdlnctheta2bysqrt5dt0.001} look plausible since the solution seems quasi-periodic even though there is very negligible backscatter in the model dissipation. We notice that numerical dissipation, $\mathcal{E}_{N}^{\tt ND}$ is overwhelming the effect of new backscatter term, $\mathcal{E}_{N}^{\tt CSMD}$. That's why we do not see much backscatter. In \cref{fig:plotdlnctheta2by3dt0.001}, we see $\mathcal{E}_{N}^{\tt ND}$ is still hiding the effect of small negative values in the $\mathcal{E}_{N}^{\tt CSMD}$ from the model dissipation, $\mathcal{E}_{N}^{\tt MD}$. Hence, there is no predicted backscatter which we believe is due to $\mathcal{E}_{N}^{\tt ND}$ being too big.
There is substantial difference between the DLN algorithm for the Corrected Smagorinksy when $\theta=1$ and other values of $\theta$. The only way to tell which is correct by doing the adapting the time step to control the $\mathcal{E}_{N}^{\tt ND}$. Since $\mathcal{E}_{N}^{\tt ND}$ is overwhelming the effect of new term, it emphasize that the adaptivity based on dissipation criteria is important. 
The minimum dissipation criteria by F. Capuano, B. Sanderse, E. M. De Angelis, and G. Coppola \cite{capuano2017minimum} is to keep the ratio of numerical dissipation, $\mathcal{E}_{N}^{\tt ND}$ and the viscous dissipation, $\mathcal{E}_{N}^{\tt VD}$ less than some tolerance, Tol for adapting the time step. Thus adapt for 
$\chi=\bigg|\frac{\mathcal{E}_{N}^{\tt ND}}{\mathcal{E}_{N}^{\tt ND}}\bigg|<\text{Tol}.$
The number of time steps to reach the final time is the indicator that how sensitive the DLN method is. We fix the final time, $T=10$ and report the number of time steps to reach there in \cref{tab:totaltime steps} for each values of $\theta$. However, the code has maximum time step $k_{max}=0.025$,  minimum time step $k_{min}=0.001$, and the initial time step $k_0=0.0001$. That means even if we use the adaptivity by doing dissipation criteria, there are very few times when the criteria is not satisfied and we see spiking which is clearly visible in \cref{fig:plotdlntheta0.98tol0.01T10}, \cref{fig:dlntheta0.95tol0.05T10}, \cref{fig:dlntheta2bysqrt5tol0.15T10}, \cref{fig:dlntheta2by3tol0.15T10}. This is the evidence of sensitivity of the CSM problem. To see the backscatter, we set the $\text{Tol}=0.01$ for $\theta=0.98$, we set the $\text{Tol}=0.05$ for $\theta=0.95$, we set the $\text{Tol}=0.15$ for $\theta=\frac{2}{\sqrt{5}}$. We started the test for all $\theta$ with $\text{Tol}=0.01$ and gradually increased $\text{Tol}$ when we failed to see the backscatter in $\mathcal{E}_{N}^{\tt MD}$. By increasing $\text{Tol}$, we are allowing more $\mathcal{E}_{N}^{\tt ND}$. Hence instead of seeing less backscatter, we see more backscatter. This is an anomaly for which we do not have an explanation. It could be due to the fact that increased tolerance led to larger time steps, see \cref{tab:totaltime steps}. But in the case of  $\theta=\frac{2}{3}$ for $\text{Tol}=0.15$, \cref{fig:dlntheta2by3tol0.15T10} looks plausible even though we fail to see backscatter in $\mathcal{E}_{N}^{\tt MD}$. Hence, when $\theta$ is smaller, there is more $\mathcal{E}_{N}^{\tt ND}$ and we see evidence  of backscatter in $\mathcal{E}_{N}^{\tt CSMD}$, but it's smaller than the $\mathcal{E}_{N}^{\tt ND}$ in the method. 
Next, we check the effect of the new term in the model on Kinetic Energy (KE) in the flow for each case. This new term may be highly fluctuating, but it has an impact on KE. Since we get one pattern for all the cases when backscatter in $\mathcal{E}_{N}^{\tt MD}$ is happening and another pattern for all the cases when backscatter in $\mathcal{E}_{N}^{\tt MD}$ is not happening, we show one example for $\theta=0.95$ in \cref{fig:kev0.95}. Bigger KE means less energy dissipation. When backscatter is happening, we notice from \cref{fig:kev0.95} that after a certain amount of time, the KE stabilizes or only varies within a small range.
\begin{figure}[H]
\subfloat[$\mathcal{E}_{N}^{\tt MD}$]{\includegraphics[width=8.2cm]{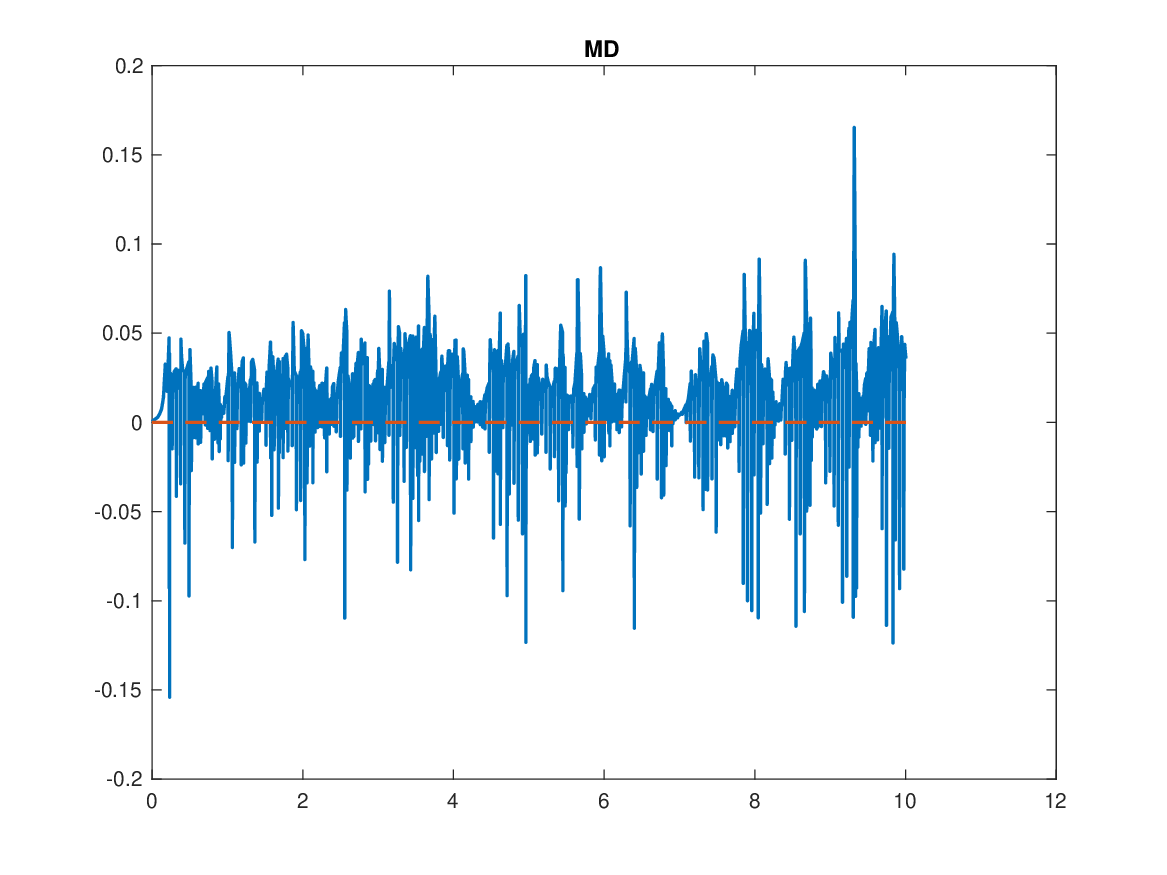}}
\subfloat[$\mathcal{E}_{N}^{\tt CSMD}$]{\includegraphics[width=8.2cm]{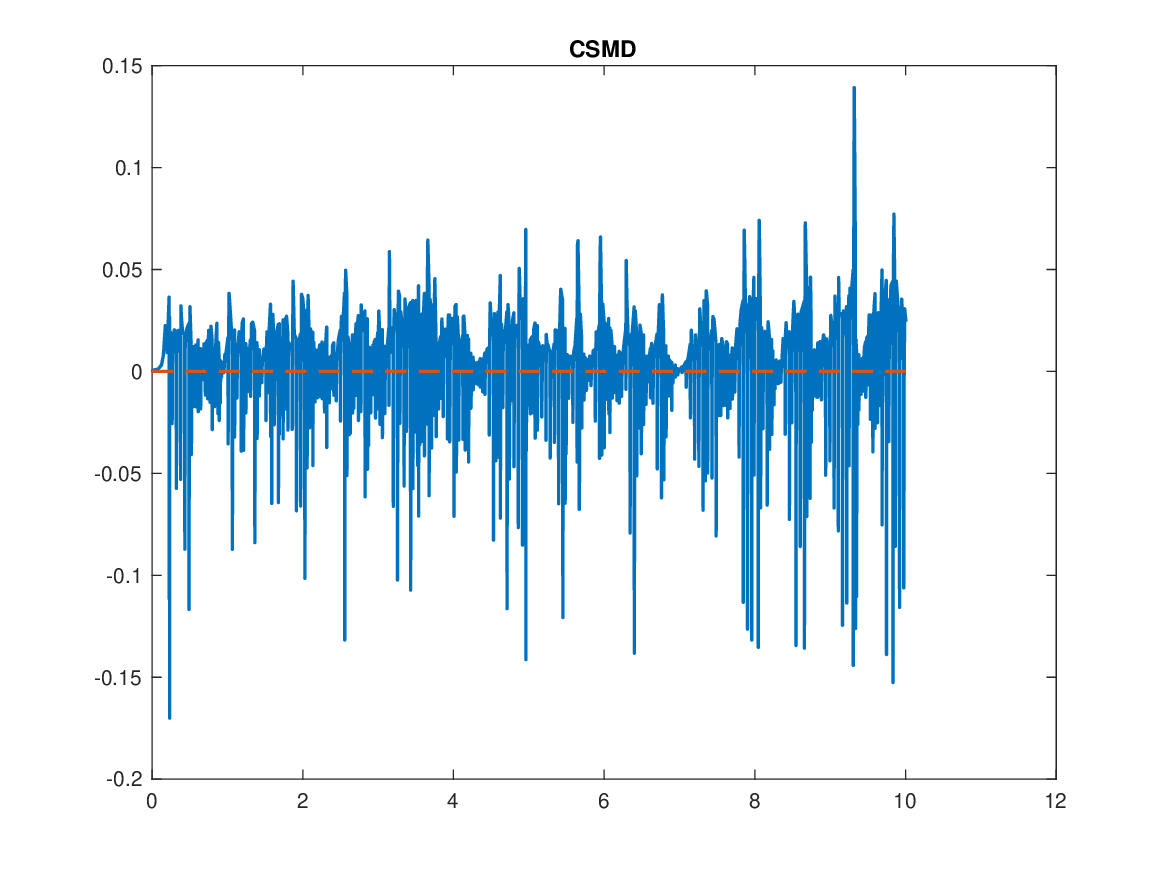}}\\
\subfloat[$\mathcal{E}_{N}^{\tt ND}$]{\includegraphics[width=8.2cm]{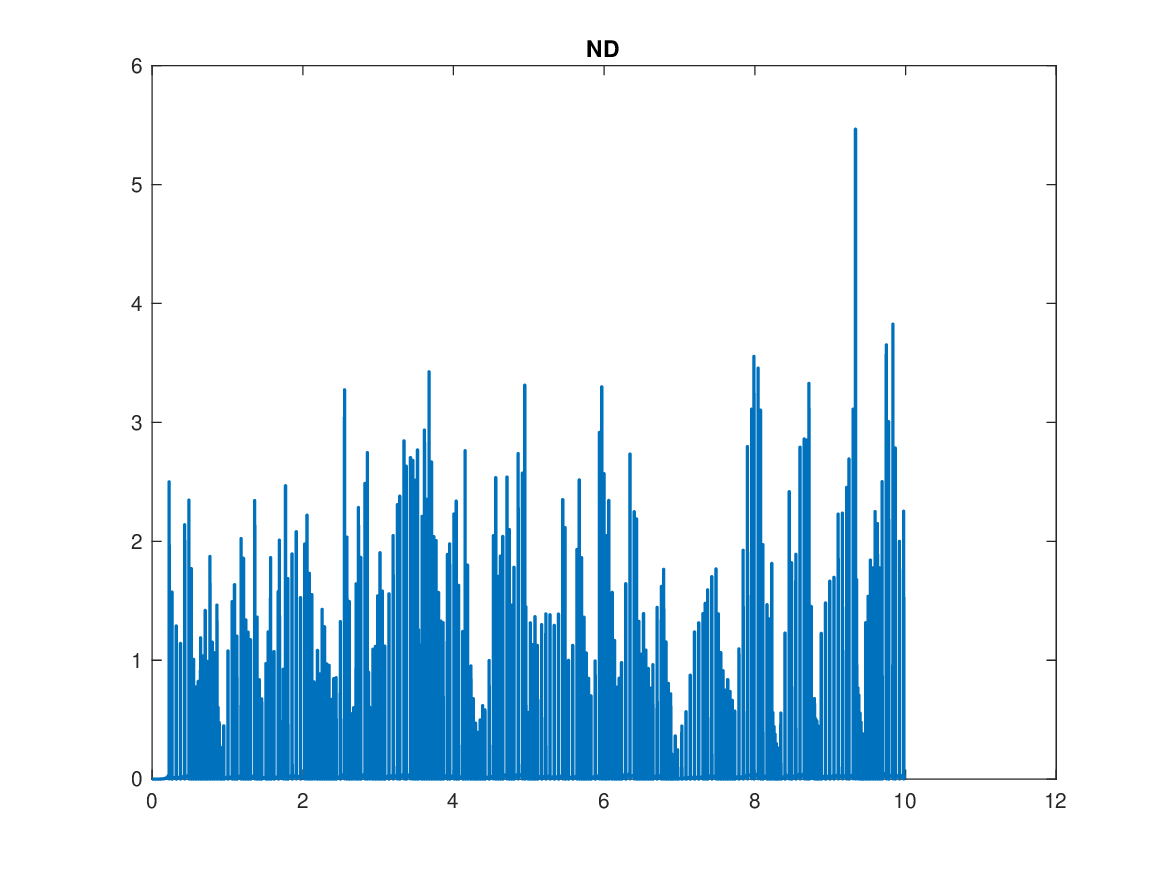}}
\subfloat[$\mathcal{E}_{N}^{\tt VD}$]{\includegraphics[width=8.2cm]{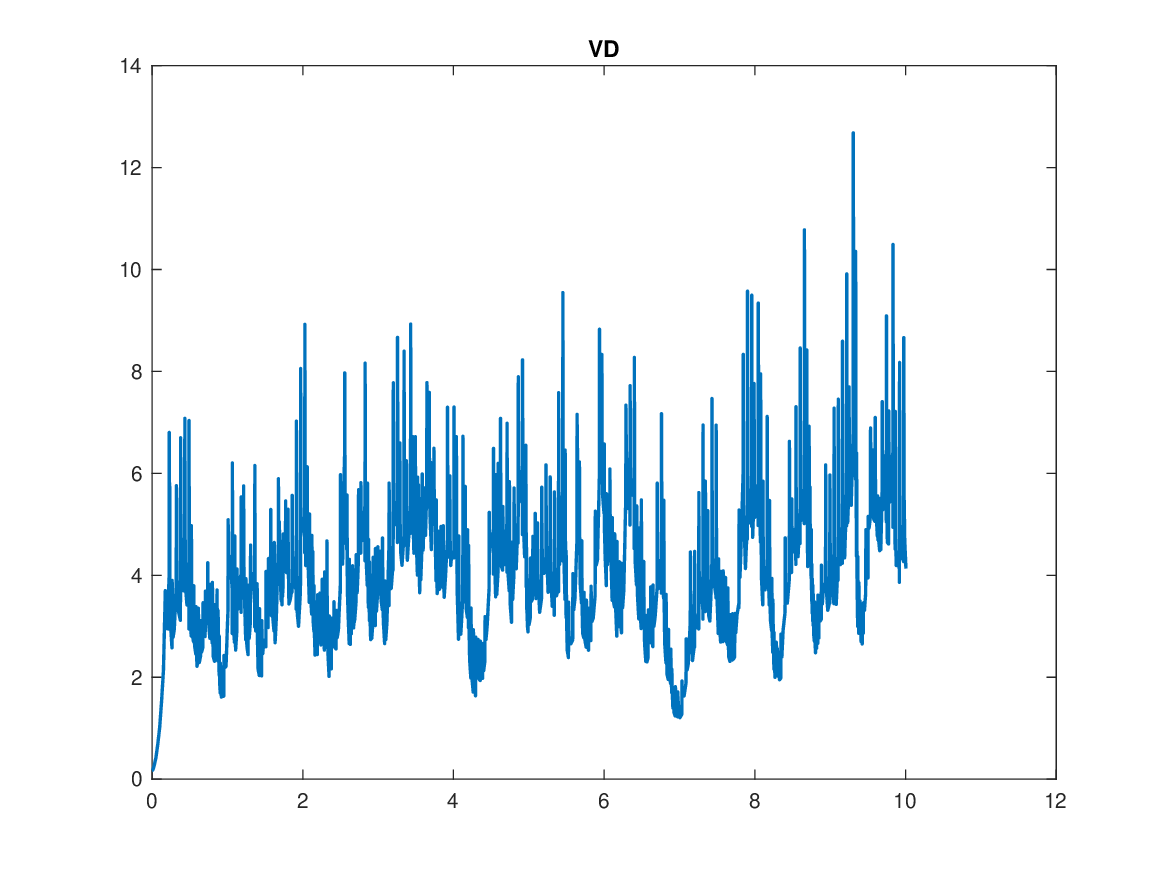}}
\caption{Variable Step DLN \eqref{v2} with $Tol =0.01,\ Re=10,000, \ \theta=0.98, \ C_s=0.1, \ \mu=0.4$. We see backscatter in $\mathcal{E}_{N}^{\tt MD}$.}
\label{fig:plotdlntheta0.98tol0.01T10}
\end{figure}
\begin{figure}[H]
\subfloat[$\mathcal{E}_{N}^{\tt MD}$]{\includegraphics[width=8.2cm]{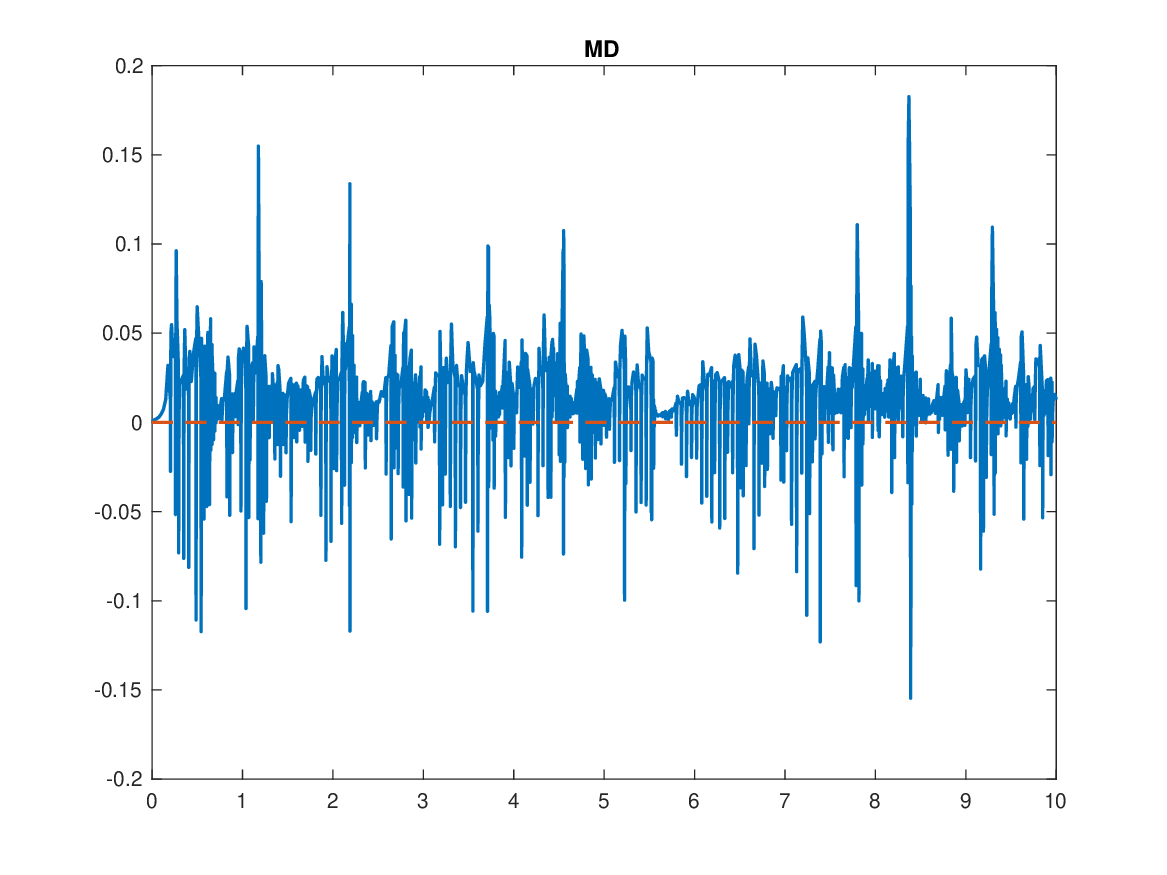}}
\subfloat[$\mathcal{E}_{N}^{\tt CSMD}$]{\includegraphics[width=8.2cm]{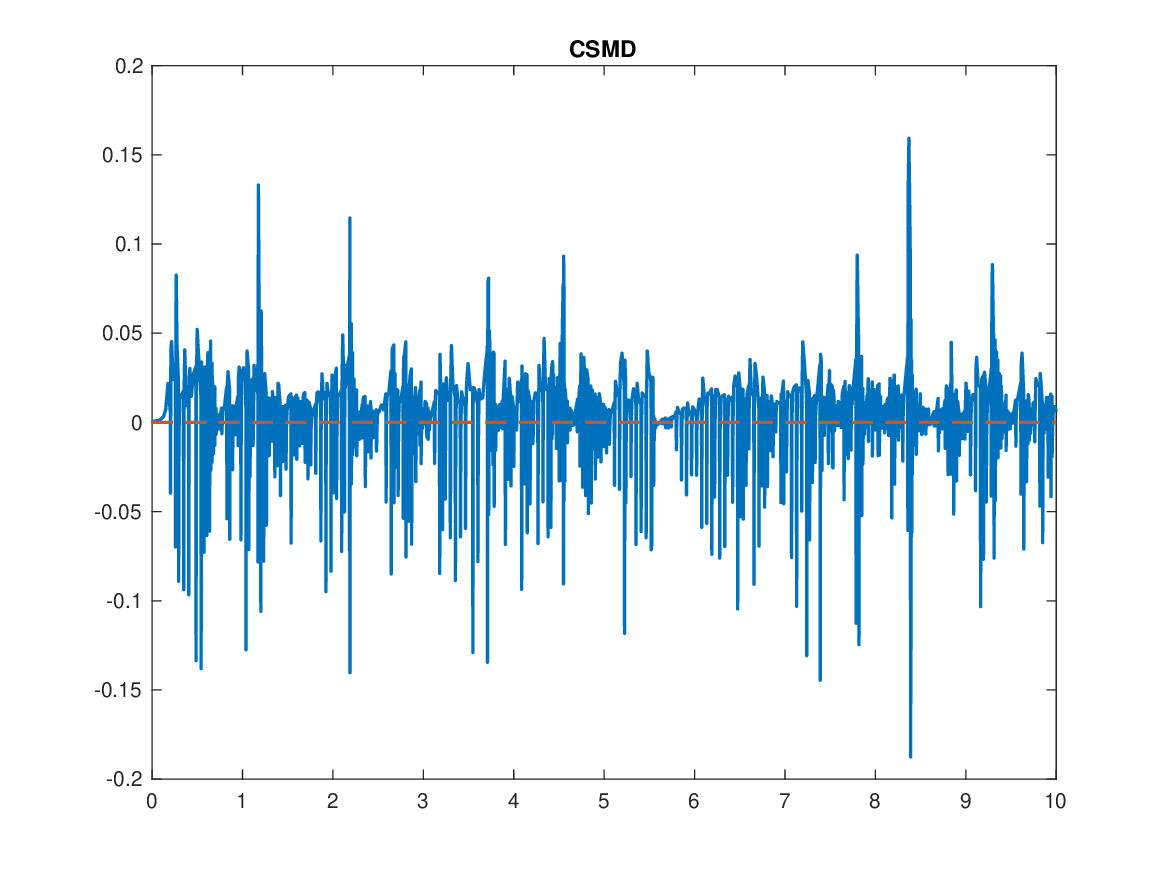}}\\
\subfloat[$\mathcal{E}_{N}^{\tt ND}$]{\includegraphics[width=8.2cm]{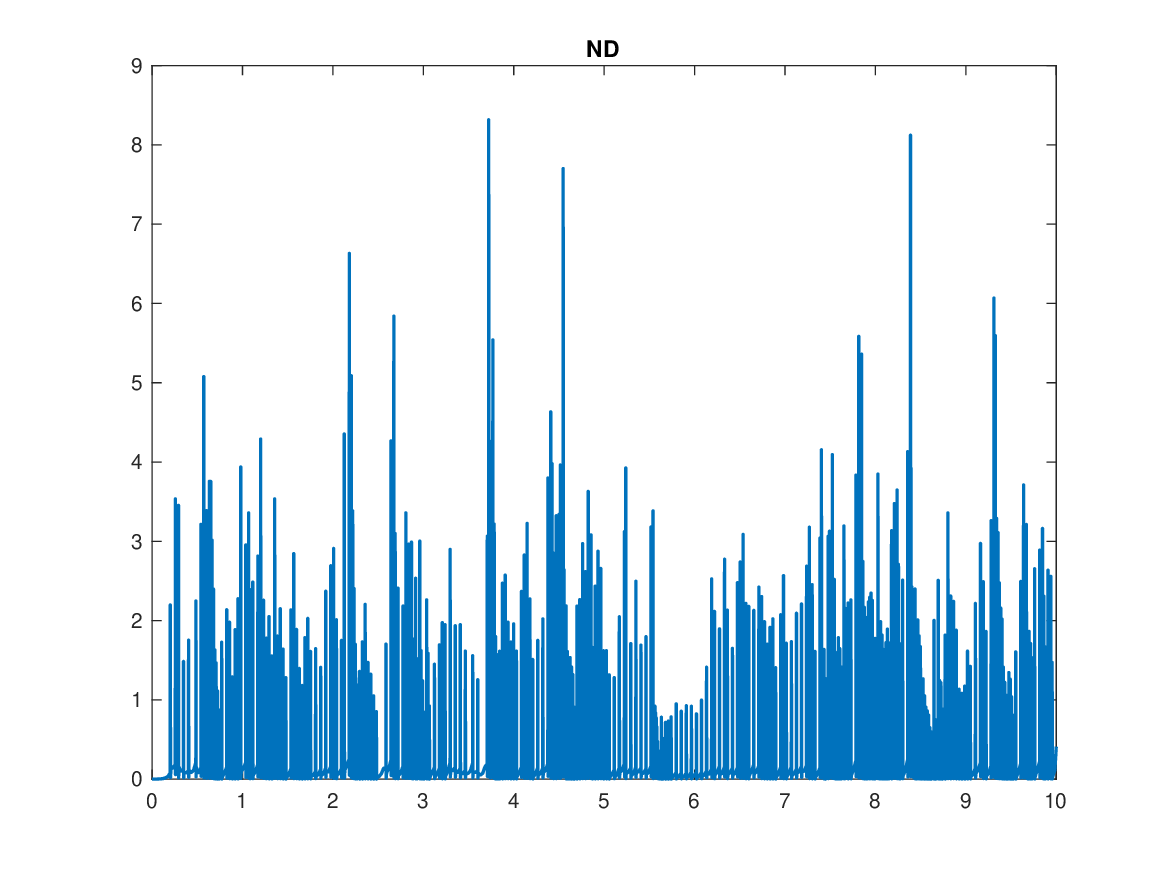}}
\subfloat[$\mathcal{E}_{N}^{\tt VD}$]{\includegraphics[width=8.2cm]{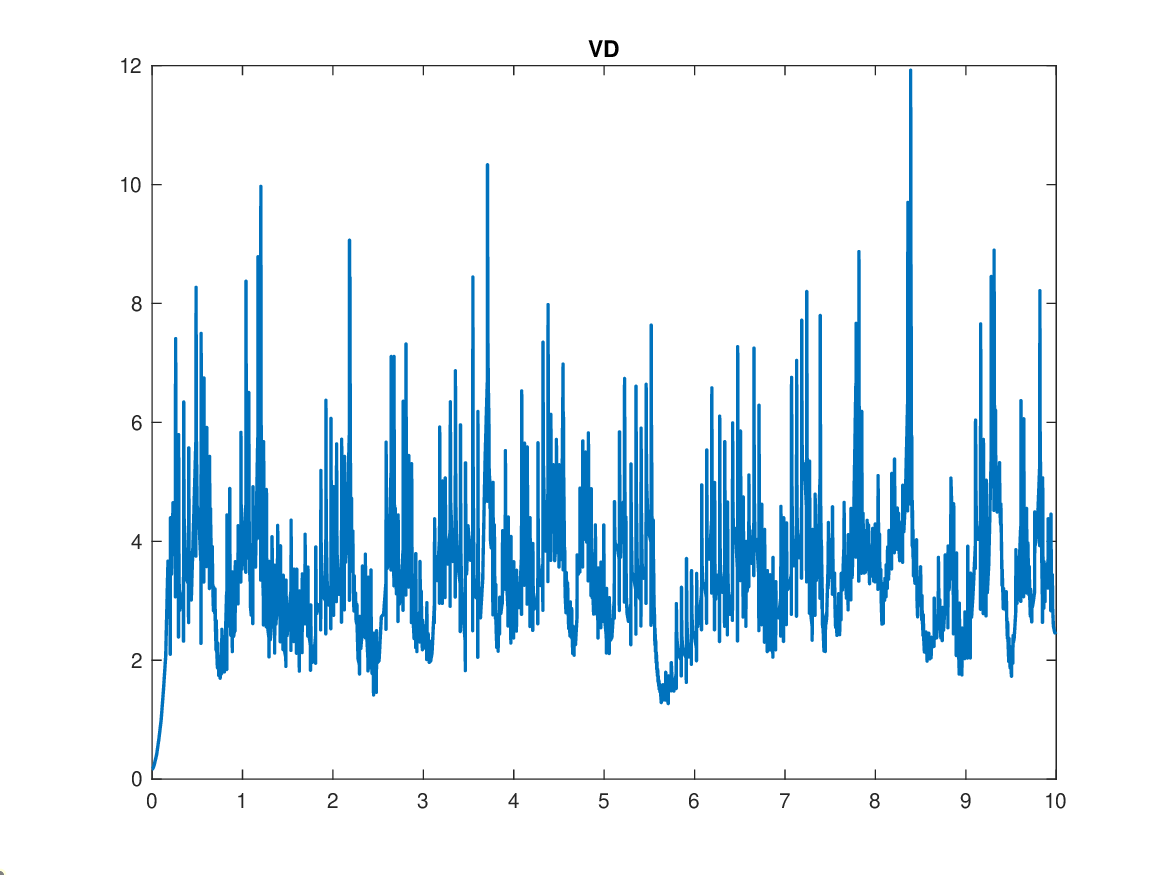}}
\caption{Variable Step DLN \eqref{v2} with $Tol =0.05,\ Re=10,000, \ \theta=0.95, \ C_s=0.1, \ \mu=0.4$. We see backscatter in $\mathcal{E}_{N}^{\tt MD}$.}
\label{fig:dlntheta0.95tol0.05T10}
\end{figure}
\begin{figure}[H]
\subfloat[$\mathcal{E}_{N}^{\tt MD}$]{\includegraphics[width=8.2cm]{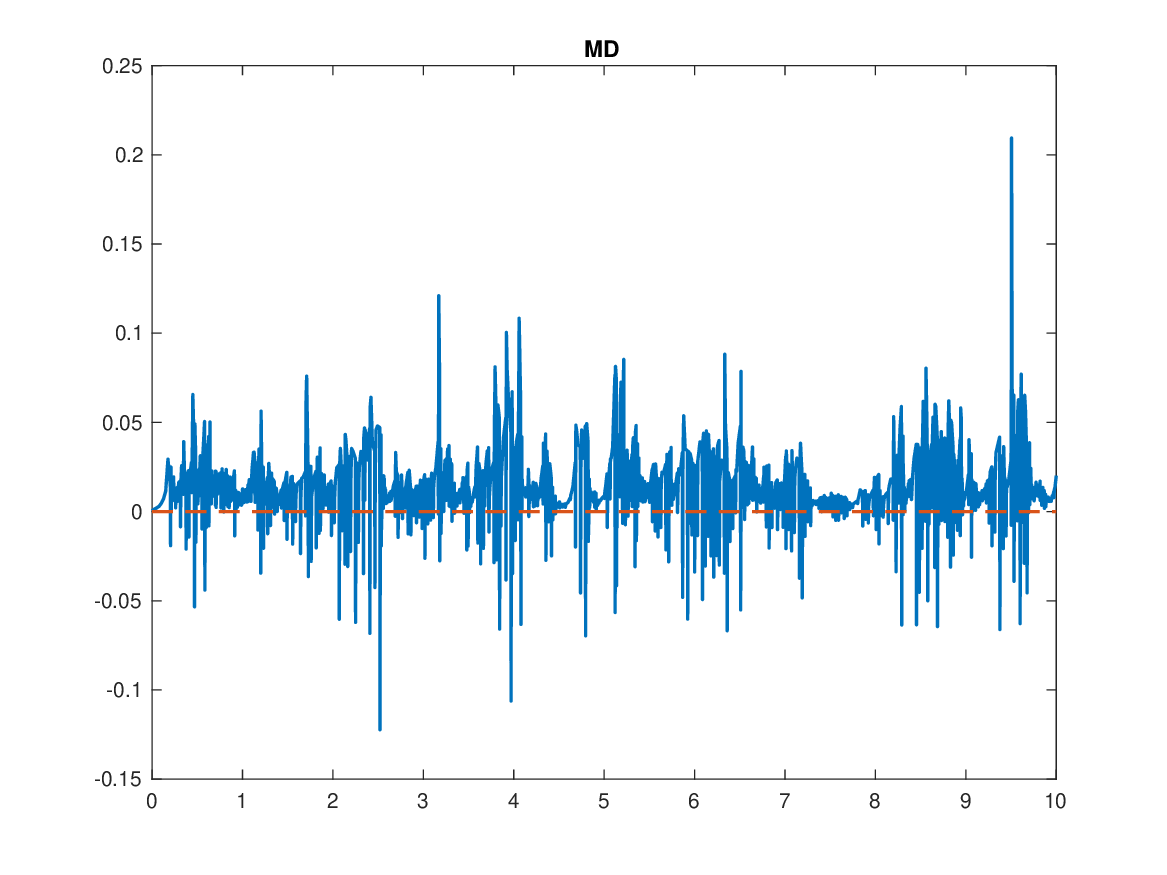}}
\subfloat[$\mathcal{E}_{N}^{\tt CSMD}$]{\includegraphics[width=8.2cm]{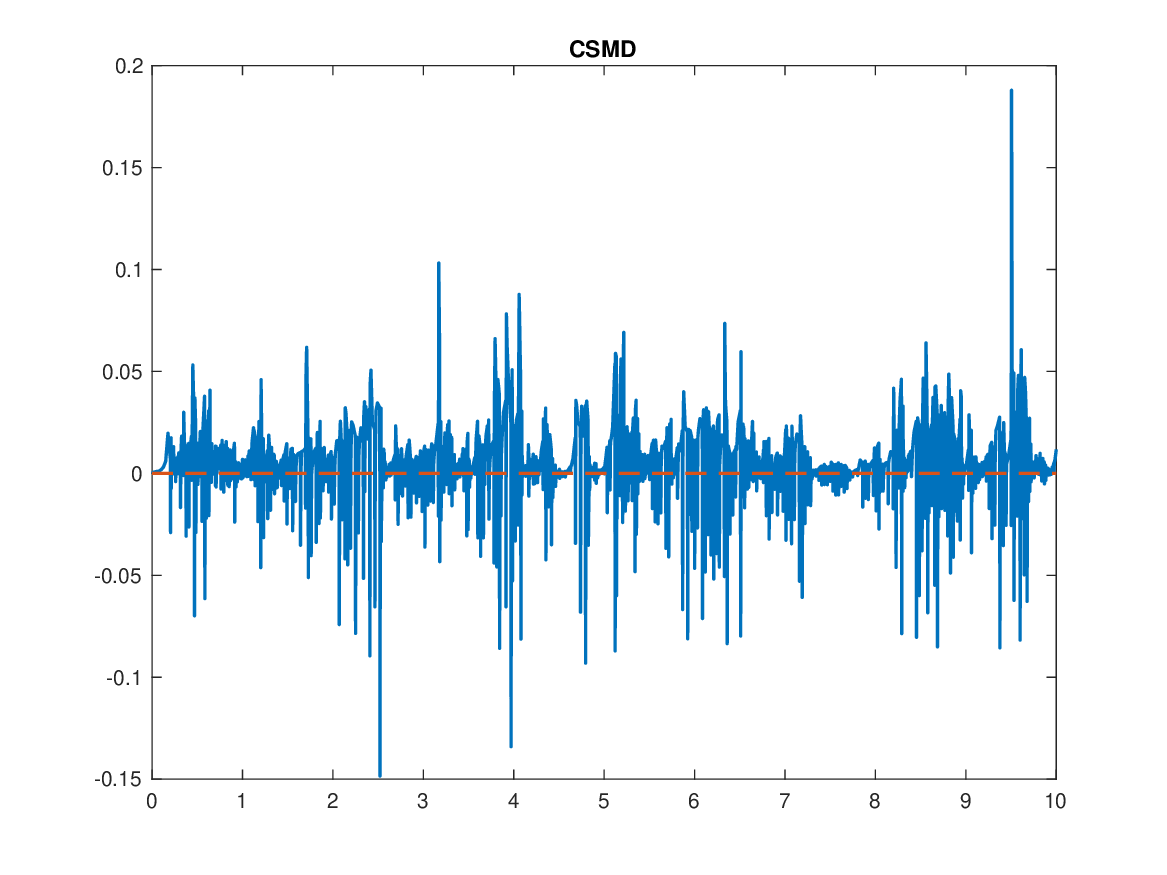}}\\
\subfloat[$\mathcal{E}_{N}^{\tt ND}$]{\includegraphics[width=8.2cm]{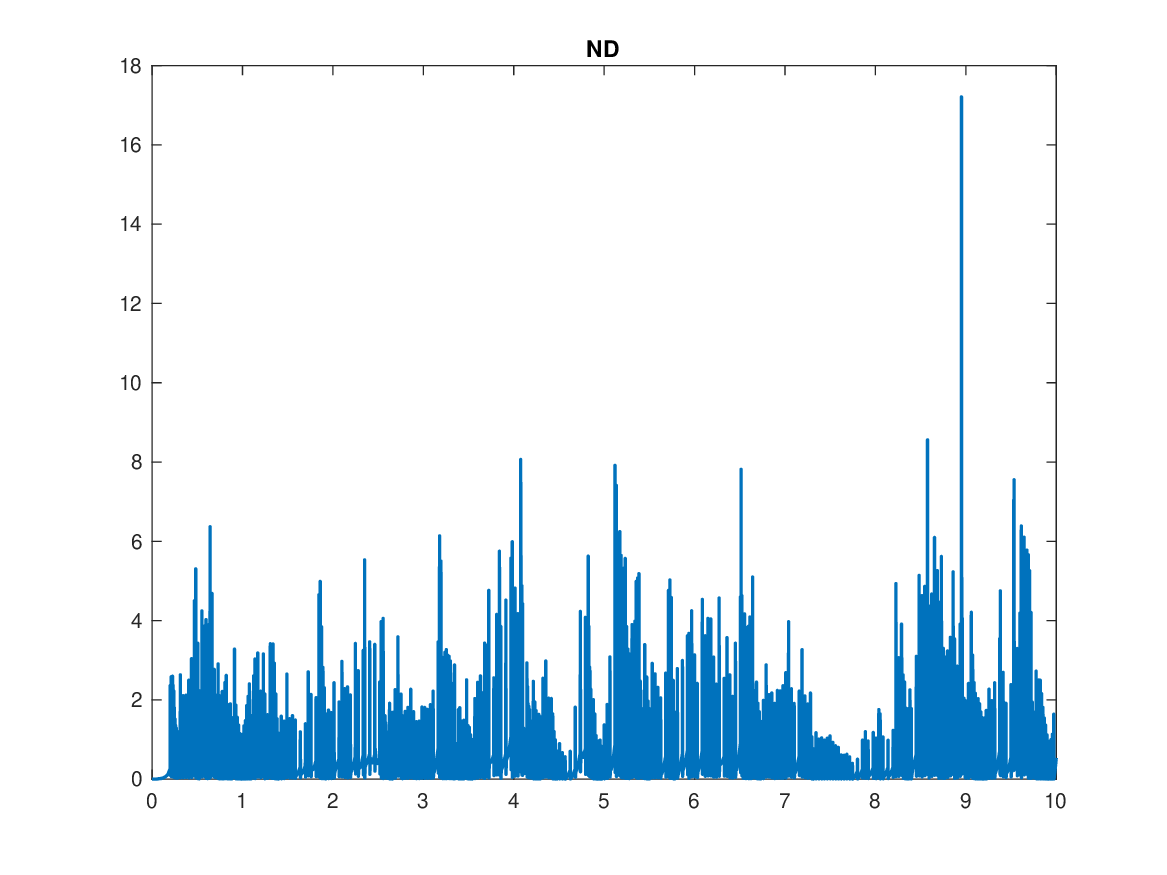}}
\subfloat[$\mathcal{E}_{N}^{\tt VD}$]{\includegraphics[width=8.2cm]{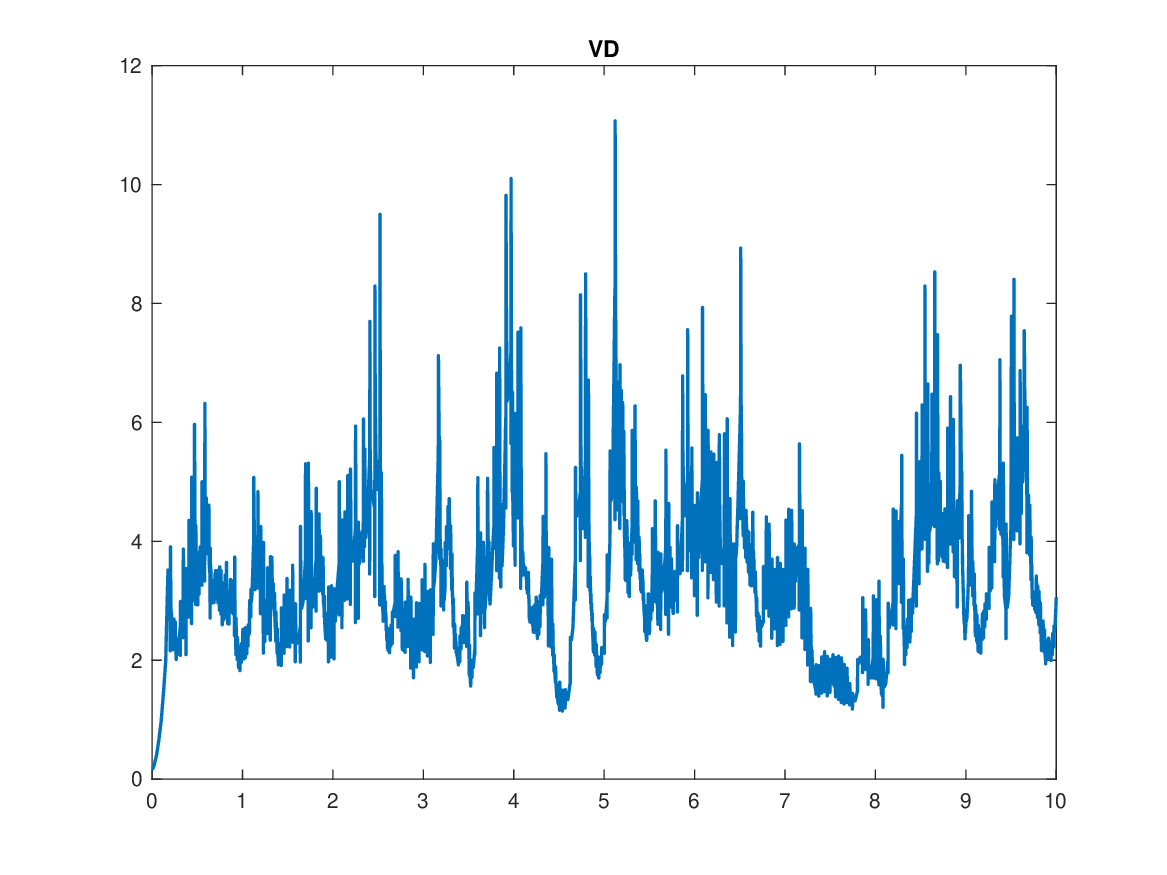}}
\caption{Variable Step DLN \eqref{v2} with $Tol =0.15,\ Re=10,000, \ \theta=\frac{2}{\sqrt{5}}, \ C_s=0.1, \ \mu=0.4$. We see backscatter in $\mathcal{E}_{N}^{\tt MD}$.}
\label{fig:dlntheta2bysqrt5tol0.15T10}
\end{figure}
\begin{figure}[H]
\subfloat[$\mathcal{E}_{N}^{\tt MD}$]{\includegraphics[width=8.2cm]{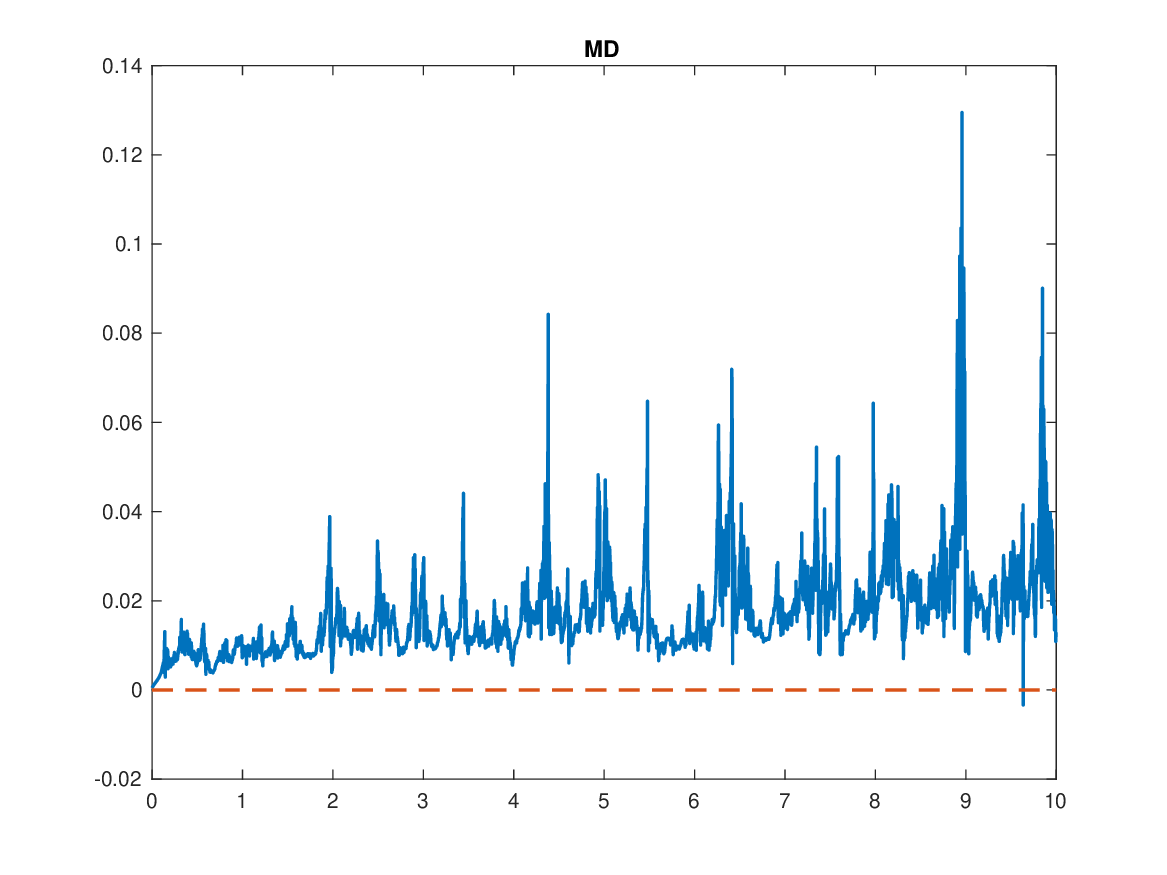}}
\subfloat[$\mathcal{E}_{N}^{\tt CSMD}$]{\includegraphics[width=8.2cm]{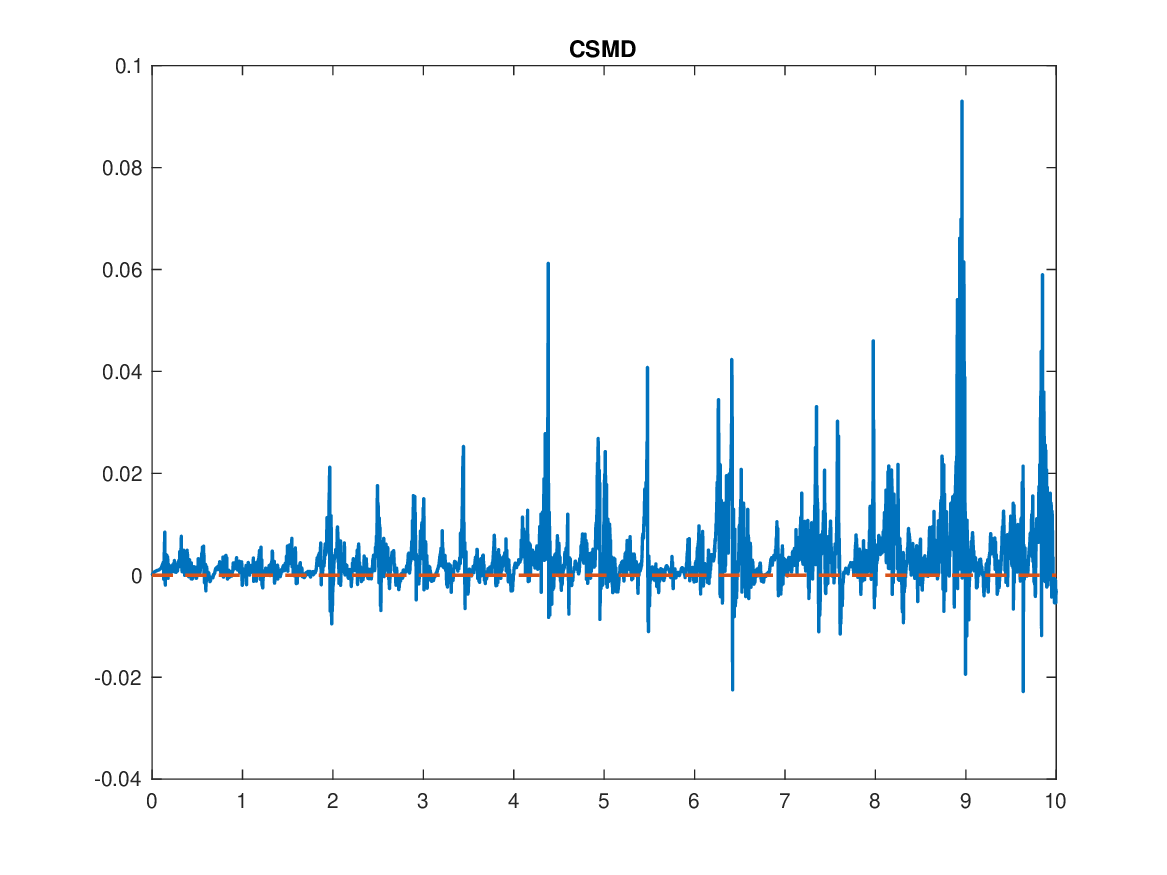}}\\
\subfloat[$\mathcal{E}_{N}^{\tt ND}$]{\includegraphics[width=8.2cm]{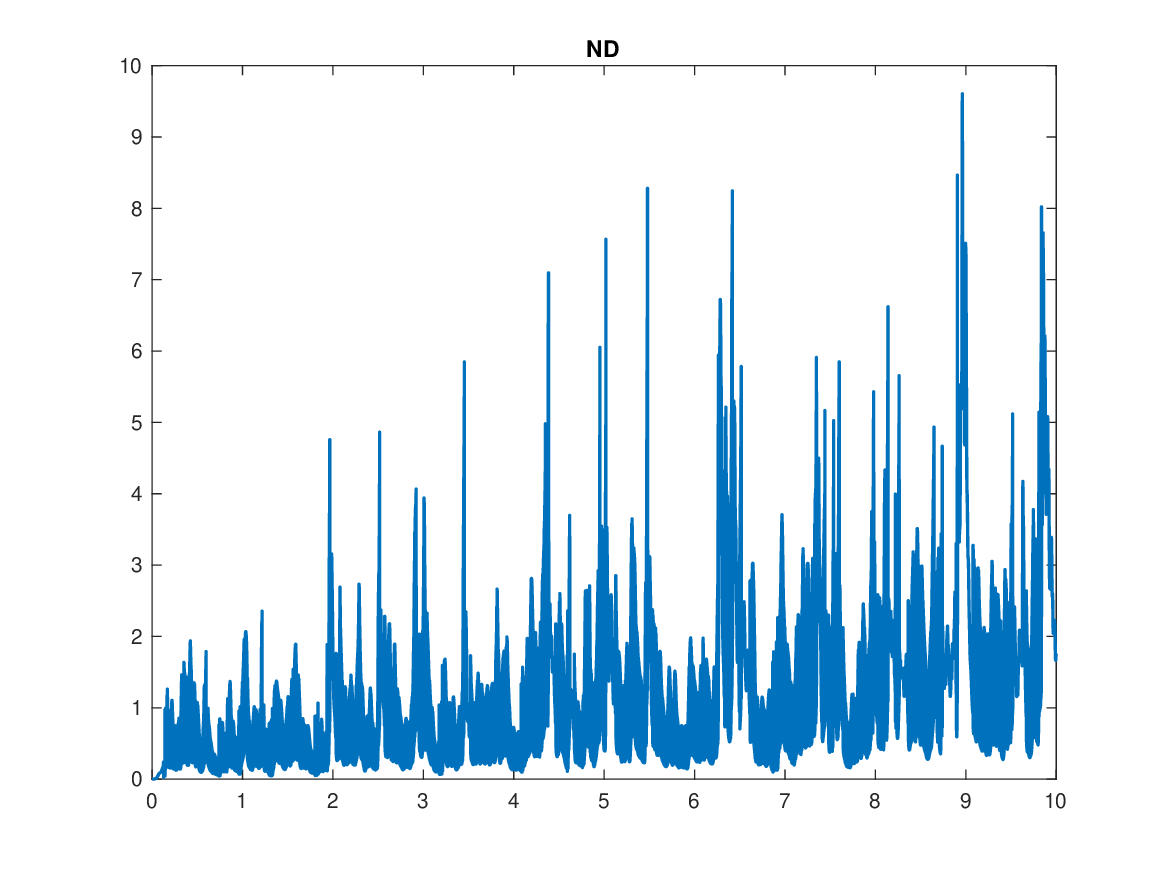}}
\subfloat[$\mathcal{E}_{N}^{\tt VD}$]{\includegraphics[width=8.2cm]{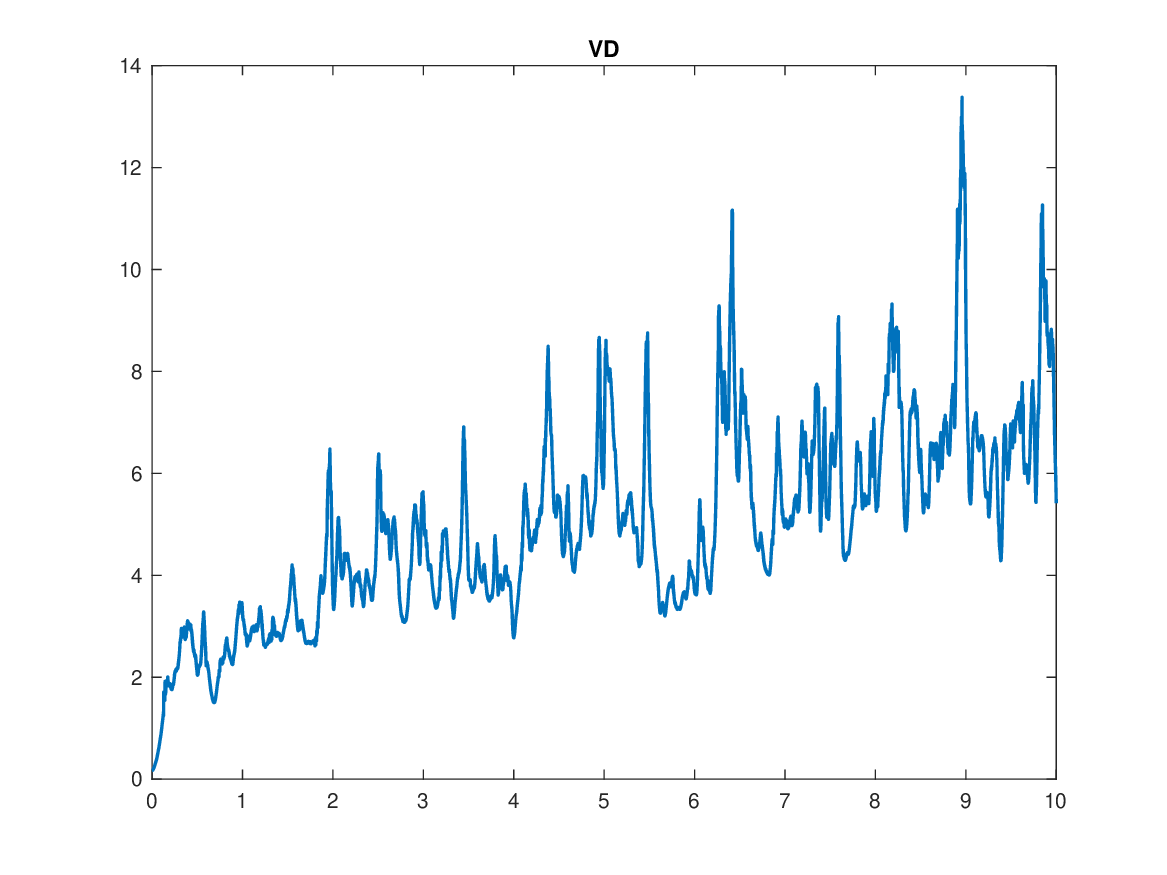}}
\caption{Variable Step DLN \eqref{v2} with $Tol =0.15,\ Re=10,000, \ \theta=\frac{2}{3}, \ C_s=0.1, \ \mu=0.4$. We do not see backscatter in $\mathcal{E}_{N}^{\tt MD}$.}
\label{fig:dlntheta2by3tol0.15T10}
\end{figure}
\begin{figure}[H]
\subfloat[KE, Tol=0.01]{\includegraphics[width=8.2cm]{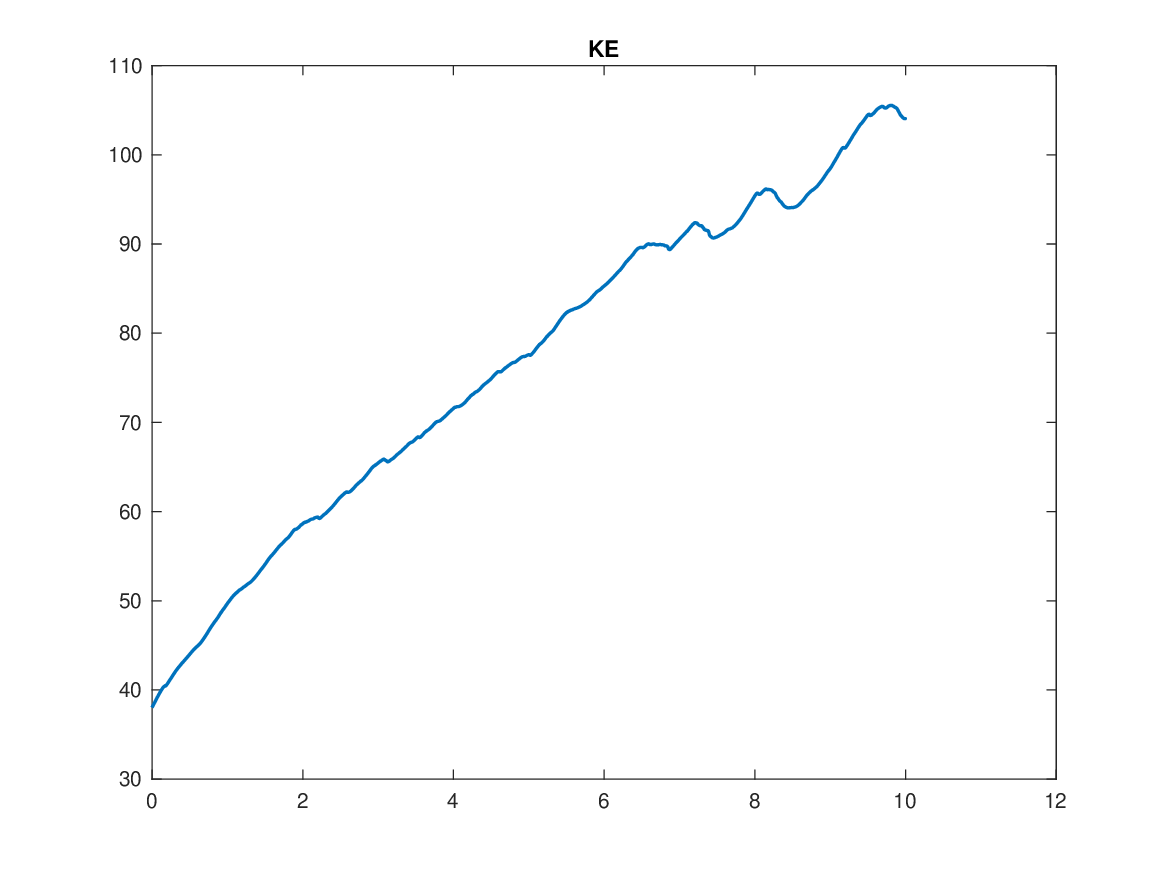}}
\subfloat[KE, Tol=0.05]{\includegraphics[width=8.2cm]{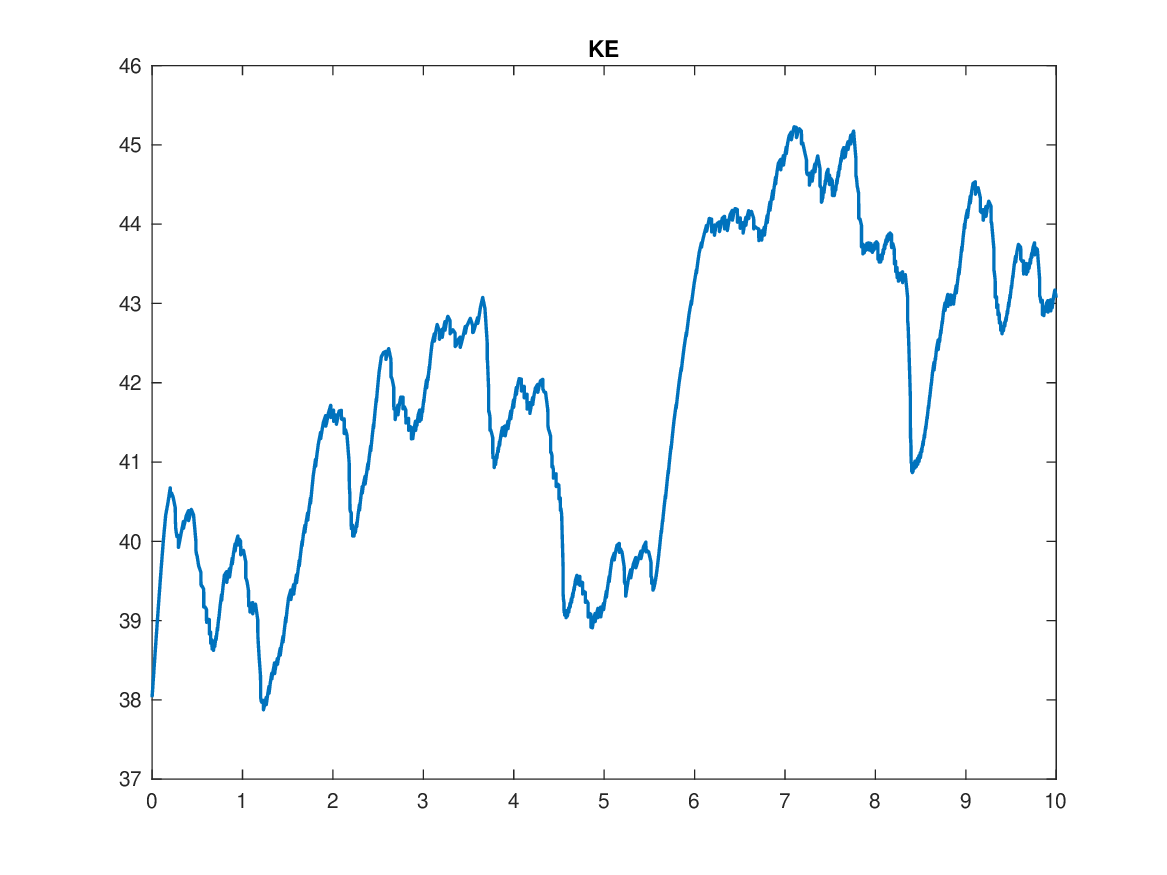}}
\caption{Variable Step DLN \eqref{v2} with $Re=10,000, \ \theta=0.95, \ C_s=0.1, \ \mu=0.4$. The left picture is for no backscatter and right picture is for backscatter.}
\label{fig:kev0.95}
\end{figure}
\begin{table}[H]
\centering
\caption{Total time steps taken to reach $T=10$ while using variable DLN for different values of $\theta$.}
\begin{tabular}{|l|l|l|}
\hline
$\theta$            & Tol           & Total time steps \\ \hline
\textbf{0.98}    & \textbf{0.01} & \textbf{9575}   \\ \hline
0.95             & 0.01          & 6505            \\ \hline
\textbf{0.95}    & \textbf{0.05} & \textbf{1604}   \\ \hline
2/$\sqrt{5}$          & 0.01          & 8988            \\ \hline
2/$\sqrt{5}$        & 0.05          & 5680            \\ \hline
\textbf{2/$\sqrt{5}$} & \textbf{0.15} & \textbf{1973}   \\ \hline
2/3              & 0.01          & 9944            \\ \hline
2/3              & 0.05          & 9575            \\ \hline
2/3              & 0.15          & 7149               \\ \hline
\end{tabular}
\label{tab:totaltime steps}
\end{table}
In \cref{tab:totaltime steps}, for the highlighted values, we notice significant backscatter in MD.
\section{Conclusion}\label{sec:conclusion}
In the report, we propose the variable time-stepping DLN algorithm for the CSM and present a complete numerical analysis for the algorithm. In the stability analysis, we have shown that the numerical solutions are unconditional stable in energy over long term. In the error analysis, we have proved that the numerical velocity converges at second order under mild time step limits if the highest polynomial degrees satisfy $r = 2$ and $s = 1$, which is verified by the first numerical test problem in Subsection \ref{sec:firstexample}. 
It's clear that to get backscatter phenomenon not from ringing property of the method, we need some dissipative methods and we need some control of numerical dissipation, $\mathcal{E}_{N}^{\tt ND}$. We therefore test in Subsection \ref{sec:secondexample} by adapting time step using minimum dissipation criteria.The closer $\theta=1$, the closer DLN method gets to be exactly conservative. If it is exactly conservative, we do not need tight control over $\mathcal{E}_{N}^{\tt ND}$. The further we go away from exactly conservative, the tighter control we need over $\mathcal{E}_{N}^{\tt ND}$ to see what seems to be true. In future, error analysis for semi-implicit DLN algorithm for CSM to avoid time restriction could be proven since it's an important open problem. Futhermore, in $3$D, storage can be an issue and hence analysis of reduced storage penalty method is also an interesting problem.
\section*{Acknowledgement}
We would like to thank Dr. W. J. Layton, for his insightful idea and guidance throughout the research.
\section{Appendix}\label{appendix}
In this appendix, we provide additional some additional tables and figures.
\begin{table}[H]
    	\centering
    	\caption{Errors by $\| \cdot \|_{\infty,0}$-norm and Convergence Rate for the constant DLN with $\theta = 2/\sqrt{5}$}
    	\begin{tabular}{cccccccc}
    		\hline
    		\hline
    		Time step $k$ & Mesh size $h$ & $\| | e^{w} | \|_{\infty,0}$ & Rate 
    		& $\| | \nabla e^{w} | \|_{\infty,0}$ & Rate  
    		& $\| | e^{p} | \|_{\infty,0}$   & Rate
    		\\
    		\hline
    		\hline
    		0.08  & 0.08571  & 6.1375       & -        & 59.5951   & -       & 10.2725  & -
    		\\
    		0.04  & 0.04221  & 0.0499412    & 6.9412   & 1.35769   & 5.4560  & 0.0803944   & 6.9975
    		\\
    		0.02  & 0.02095  & 0.0119888    & 2.0585   & 0.399817  & 1.7637  & 0.0195956   & 2.0366
    		\\
    		0.01  & 0.01048  & 0.00297839   & 2.0091   & 0.103952  & 1.9434  & 0.00502445  & 1.9635
    		\\
    		\hline
    	\end{tabular}
    \end{table}
    \begin{table}[H]
    	\centering
    	\caption{Errors by $\| \cdot \|_{0,0}$-norm and Convergence Rate for the constant DLN with $\theta = 2/\sqrt{5}$}
    	\begin{tabular}{cccccccc}
    		\hline
    		\hline
    		Time step $k$ & Mesh size $h$ & $\| | e^{w} | \|_{0,0}$ & Rate 
    		& $\| | \nabla e^{w} | \|_{0,0}$ & Rate  
    		& $\| | e^{p} | \|_{0,0}$   & Rate
    		\\
    		\hline
    		\hline
    		0.08  & 0.08571  & 8.05856      & -        & 86.5876   & -       & 11.9822  & -
    		\\
    		0.04  & 0.04221  & 0.107272     & 6.2312   & 3.05843   & 4.8233  & 0.143556   & 6.3831
    		\\
    		0.02  & 0.02095  & 0.0249452    & 2.1044   & 0.900625  & 1.7638  & 0.0346417  & 2.0510
    		\\
    		0.01  & 0.01048  & 0.00616932   & 2.0156   & 0.234285  & 1.9427  & 0.00880143 & 1.9767
    		\\
    		\hline
    	\end{tabular}
    \end{table}
    \begin{table}[H]
    	\centering
    	\caption{Errors by $\| \cdot \|_{\infty,0}$-norm and Convergence Rate for the constant DLN with $\theta = 1$}
    	\begin{tabular}{cccccccc}
    		\hline
    		\hline
    		Time step $k$ & Mesh size $h$ & $\| | e^{w} | \|_{\infty,0}$ & Rate 
    		& $\| | \nabla e^{w} | \|_{\infty,0}$ & Rate  
    		& $\| | e^{p} | \|_{\infty,0}$   & Rate
    		\\
    		\hline
    		\hline
    		0.08  & 0.08571  & 6.03148      & -        & 72.2845   & -       & 14.0717    & -
    		\\
    		0.04  & 0.04221  & 0.0499902    & 6.9147   & 1.35784   & 5.7343  & 0.0831369   & 7.4031
    		\\
    		0.02  & 0.02095  & 0.0120016    & 2.0584   & 0.399858  & 1.7638  & 0.0203057   & 2.0336
    		\\
    		0.01  & 0.01048  & 0.00298191   & 2.0089   & 0.103961  & 1.9434  & 0.00512713  & 1.9857
    		\\
    		\hline
    	\end{tabular}
    \end{table}
    \begin{table}[H]
    	\centering
    	\caption{Errors by $\| \cdot \|_{0,0}$-norm and Convergence Rate for the constant DLN with $\theta = 1$}
    	\begin{tabular}{cccccccc}
    		\hline
    		\hline
    		Time step $k$ & Mesh size $h$ & $\| | e^{w} | \|_{0,0}$ & Rate 
    		& $\| | \nabla e^{w} | \|_{0,0}$ & Rate  
    		& $\| | e^{p} | \|_{0,0}$   & Rate
    		\\
    		\hline
    		\hline
    		0.08  & 0.08571  & 8.50684      & -        & 105.23    & -       & 14.0354    & -
    		\\
    		0.04  & 0.04221  & 0.107277     & 6.3092   & 3.05802   & 5.1048  & 0.14397    & 6.6072
    		\\
    		0.02  & 0.02095  & 0.0249479    & 2.1044   & 0.90061   & 1.7636  & 0.0347625  & 2.0502
    		\\
    		0.01  & 0.01048  & 0.0061698    & 2.0156   & 0.234279  & 1.9427  & 0.00883384 & 1.9764
    		\\
    		\hline
    	\end{tabular}
    \end{table}
\begin{figure}[H]
\subfloat[$\mathcal{E}_{N}^{\tt MD}$]{\includegraphics[width=8.2cm]{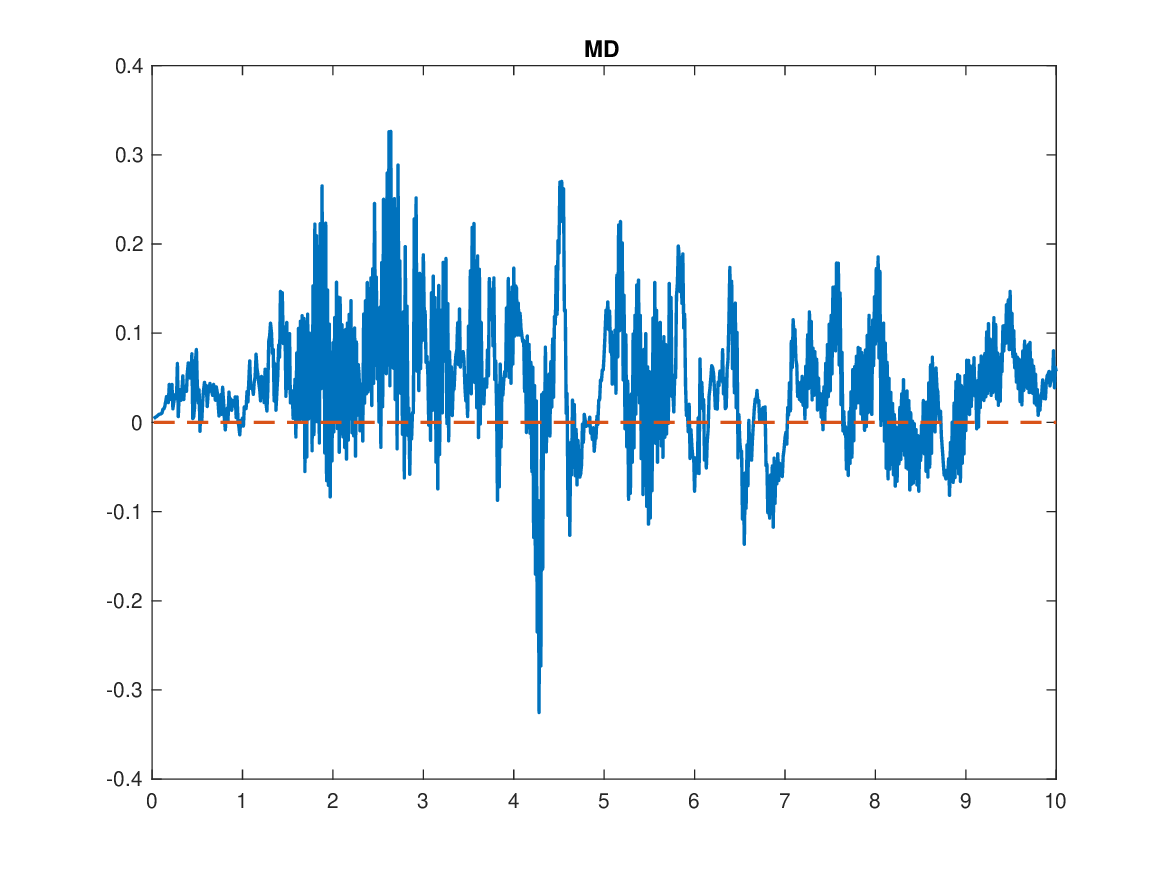}}
\subfloat[$\mathcal{E}_{N}^{\tt CSMD}$]{\includegraphics[width=8.2cm]{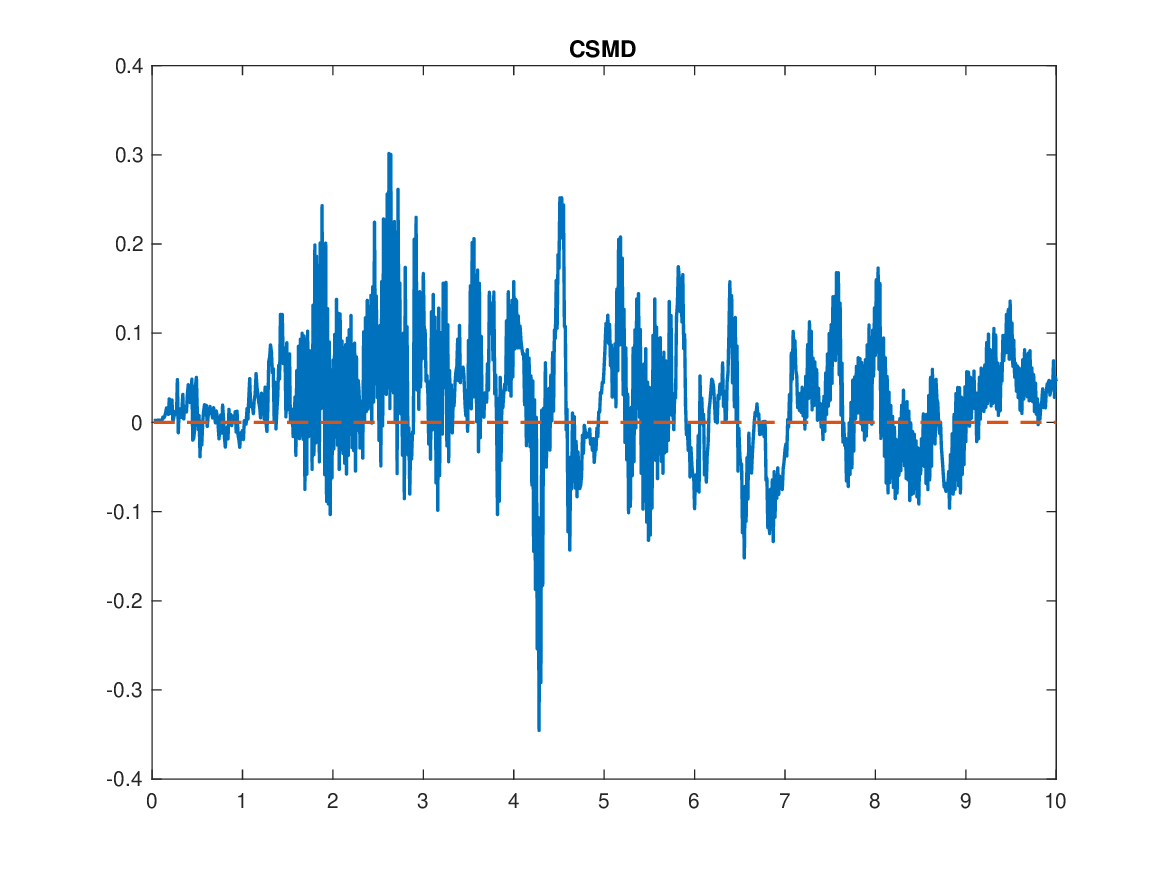}}\\
\subfloat[$\mathcal{E}_{N}^{\tt ND}$]{\includegraphics[width=8.2cm]{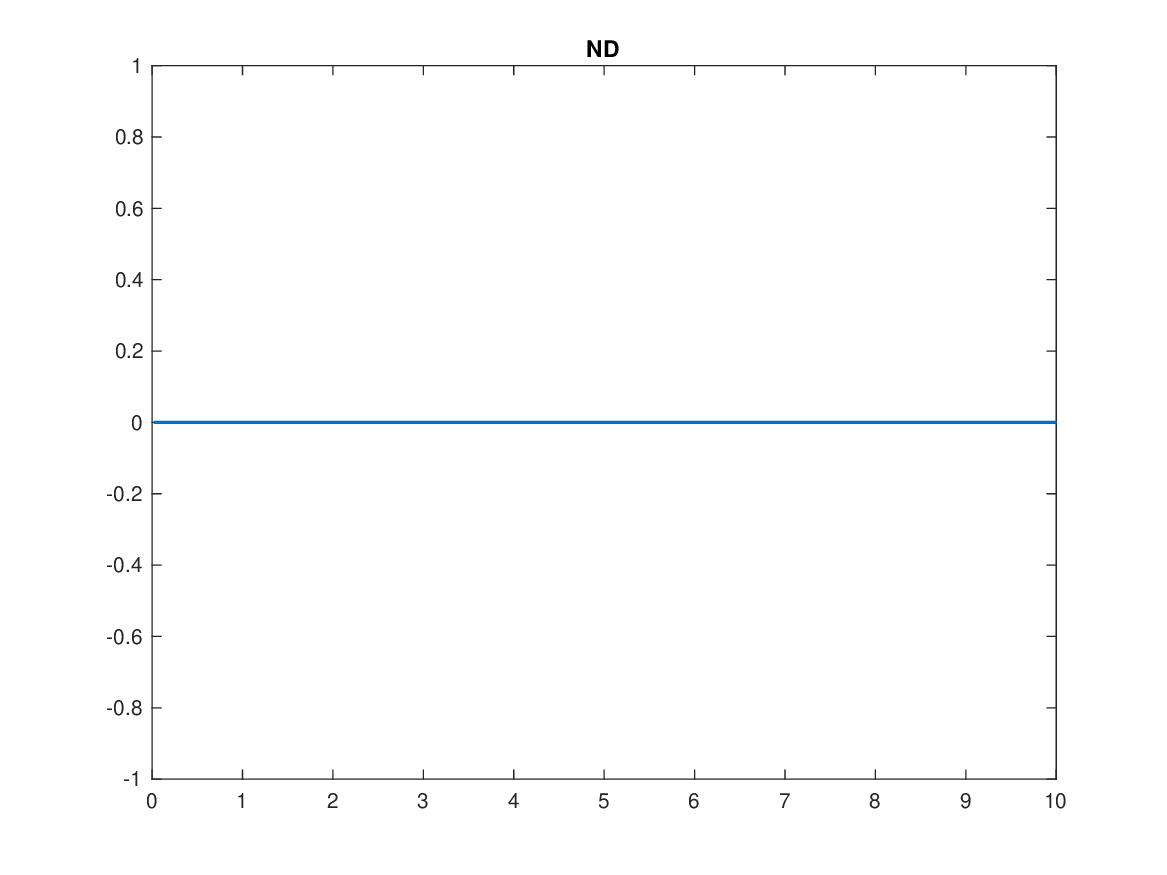}}
\subfloat[$\mathcal{E}_{N}^{\tt VD}$]{\includegraphics[width=8.2cm]{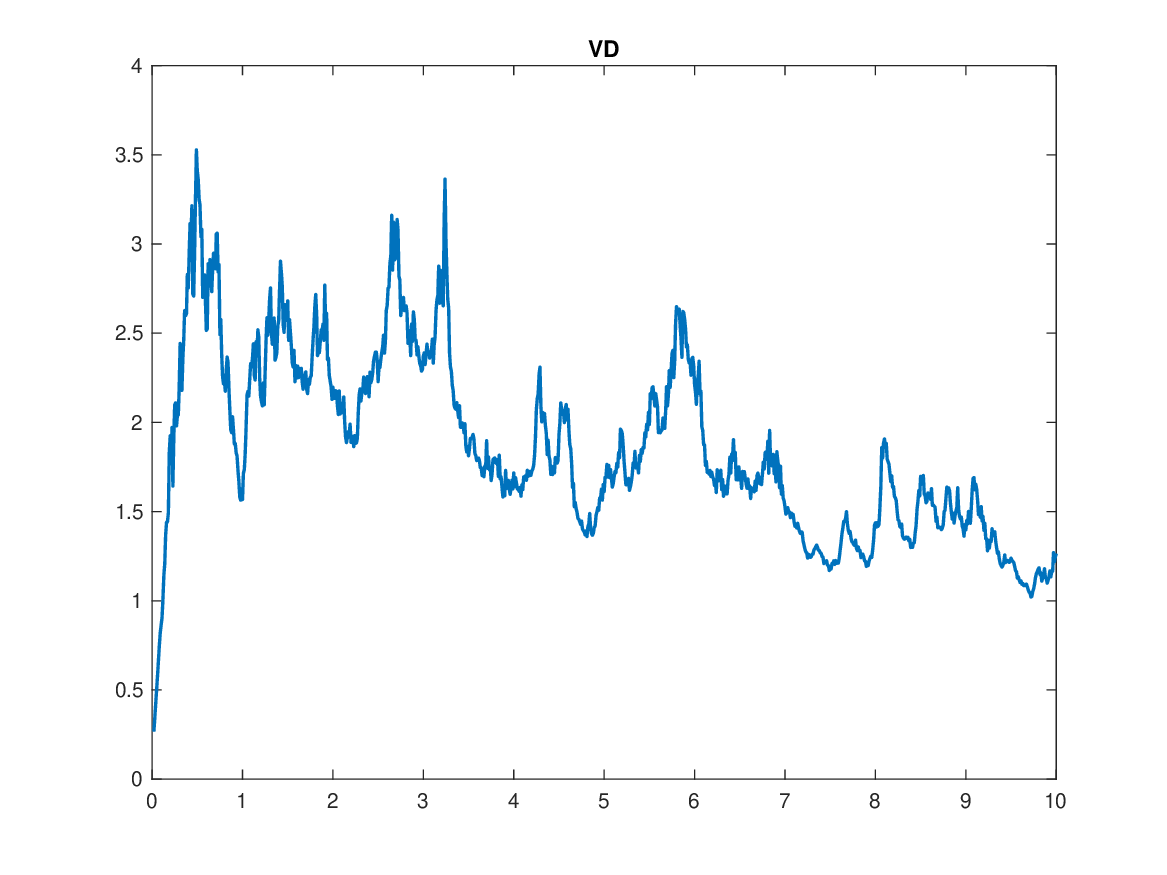}}
\caption{Constant time step DLN \eqref{v2} with $k =0.001,\ Re=10,000, \ \theta=1, \ C_s=0.1, \ \mu=0.4$.}
\label{fig:plotdlnctheta1dt0.001}
\end{figure}
\bibliographystyle{plain}
\bibliography{references}
\end{document}